\newcommand\N{{\mathbb N}}
\newcommand\R{{\mathbb R}}
\def\AA{{\mathcal A}}
\def\BB{{\mathcal B}}
\def\DD{{\mathcal D}}
\def\EE{{\mathcal E}}
\def\LL{{\mathcal L}}
\def\NN{{\mathcal N}}
\def\SS{{\mathcal S}}
\def\BBB{{\mathscr B}}
\def\eps{{\varepsilon}}
\newcommand{\la}{\langle}
\newcommand{\ra}{\rangle}
\newcommand{\Indiq}[1]{\mathbf{1}_{\{ #1 \}}}
\newtheorem{thm}{Theorem}[section]
\newtheorem{prop}[thm]{Proposition}
\newtheorem{lem}[thm]{Lemma}
\newtheorem{cor}[thm]{Corollary}
\newtheorem*{thm*}{Theorem}
\theoremstyle{remark}
\newtheorem{rem}[thm]{Remark}
\theoremstyle{definition}
\newtheorem{definition}[thm]{Definition}
\numberwithin{equation}{section}
\newcommand{\beqn}{\begin{equation}}
\newcommand{\eeqn}{\end{equation}}
\newcommand{\bal}{\begin{aligned}}
\newcommand{\eal}{\end{aligned}}
\title[Convergence to equilibrium for the Landau equation]
{On the rate of convergence to equilibrium for the homogeneous Landau equation with soft potentials}
\author{Kleber Carrapatoso}   
\subjclass[2000]{47H20, 76P05, 82B40, 35K55}
\keywords{Landau equation; exponential decay; polynomial decay; soft potentials}
\begin{document}

\maketitle

\begin{center}
\small
\textit{
\'Ecole Normale Sup\'erieure de Cachan, CMLA (UMR CNRS 8536), 61 av.\ du Pr\'esident Wilson, 94235~Cachan, France. E-mail address}: \texttt{carrapatoso@cmla.ens-cachan.fr}
\end{center}

\begin{abstract}
We investigate in this work the rate of convergence to equilibrium of solutions to the spatially homogeneous Landau equation with soft potentials. 
Firstly, we prove a polynomial in time convergence using an entropy method with some new a priori estimates. Finally, we prove
an exponential in time convergence towards the equilibrium with the optimal rate, given by the spectral gap of the associated linearised operator, combining new decay estimates for the semigroup generated by the linearised Landau operator in weighted $L^p$-spaces together with the polynomial decay described above.

%
%
%
\end{abstract}

\section{Introduction}

The Landau equation is a fundamental model in kinetic theory that describes the evolution of the density of particles in a plasma in the phase space of all positions and velocities.
We consider in this work the case of spatially homogeneous density functions, which verifies the \emph{spatially homogeneous Landau equation} given by
\beqn\label{eq:landau}
\left\{
\begin{array}{rcl}
\partial_t f  &=&  Q(f,f)  \\
f_{|t=0} &=& f_0,
\end{array}
\right.
\eeqn
where $f=f(t,v) \geq 0$ is the density of particles with velocity $v \in \R^3$ at time $t \ge 0$. The Landau collision operator $Q$ is a bilinear operator acting only on the variable $v$ and given by
\beqn\label{eq:oplandau0}
Q(g,f) = \partial_{i} \int_{\R^3} a_{ij}(v-v_*) \left[ g_* \partial_j f - f \partial_{j}g_*\right]\, dv_*,
\eeqn
where here and below we shall use the convention of implicit summation over repeated indices and the usual shorthand $g_* = g(v_*)$, $\partial_{j} g_* = \partial_{v_{*j}} g(v_*)$, $f=f(v)$ and $\partial_j f = \partial_{v_j} f(v)$. 

The matrix-valued function $a$ is nonnegative, symmetric and depends on the interaction between particles. One usually assumes that particles interact by binary relation through a potential proportional to $1/r^s$, where $r$ denotes their distance. In this case $a$ is given by (see for instance \cite{Villani-BoltzmannBook})
\beqn\label{eq:aij}
a_{ij}(z) = |z|^{\gamma+2} \, \Pi_{ij}(z), \quad \Pi_{ij}(z) = \left( \delta_{ij} - \frac{z_i z_j}{|z|^2}\right),
\eeqn
with $\gamma=(s-4)/s$. One usually calls hard potentials if $\gamma\in(0,1]$, Maxwellian molecules if $\gamma=0$, soft potentials if $\gamma=(-3,0)$ and Coulombian potential if $\gamma=-3$. One also separates the soft potentials into two categories: \emph{moderately} soft potentials when $\gamma \in (-2,0)$ and \emph{very} soft potentials if $\gamma \in (-3,-2]$. In this paper we are interested in the case of moderately soft potentials.

We also define the following quantities
\beqn\label{eq:bc}
b_i(z) = \partial_j a_{ij}(z) = - 2 \, |z|^\gamma \, z_i, \quad c(z) =  \partial_{ij} a_{ij}(z)  = - 2 (\gamma+3) \, |z|^\gamma,
\eeqn
from which we are able to rewrite the Landau operator in the following way
\beqn\label{eq:oplandau}
\bal
Q(g,f) 
& = \nabla \cdot \{ (a*g) \nabla f - (b*g) f \}\\
&= ( a_{ij}*g)\partial_{ij} f - (c*g) f .
\eal
\eeqn

\smallskip

Let us present some important properties of the Landau equation. First of all, it conserves mass, momentum and energy. Indeed, at least formally, for any test function $\varphi$ we have (see e.g. \cite{Vi2})
$$
\int_{\R^3} Q(f,f) \varphi(v) \, dv = \frac12 \int_{\R^3 \times \R^3} a_{ij}(v-v_*) f f_* 
\left(\frac{\partial_i f}{f} -  \frac{\partial_{i} f_*}{f_*}  \right)\left( \partial_j \varphi - \partial_j \varphi_* \right) \, dv \, dv_*,
$$
from which we deduce, for any $t\ge 0$,
\beqn\label{eq:cons}
\frac{d}{dt} \int f \varphi\, dv = 
\int Q(f,f) \varphi \, dv = 0 \qquad\text{for}\qquad \varphi(v)  = 1, v, |v|^2.
\eeqn
Another important property of this equation is the Landau version of the celebrated $H$-Theorem of Boltzmann: The entropy $H(f) :=\int f \log f$ is nonincreasing and any equilibrium is a Maxwellian distribution (Gaussian distribution). Indeed, at least formally, the entropy-dissipation functional defined as
\beqn\label{eq:def-Df}
D(f) := -\int Q(f,f) \log f,
\eeqn
verifies the following inequality
\begin{equation}\label{eq:Df}
\begin{aligned}
D(f) = -\frac{d}{dt} H(f)=
\frac{1}{2}\int_{\R^3\times \R^3} \,a_{ij}(v-v_*)
\left( \frac{\partial_i f}{f} -  \frac{\partial_{i*} f_*}{f_*} \right)
 \left( \frac{\partial_j f}{f} -  \frac{\partial_{j*} f_*}{f_*} \right)\,  f f_* \,dv \,dv_*
\geq 0,
\end{aligned}
\end{equation}
and we also have
\beqn\label{eq:Ht}
H(f(t)) + \int_0^t D(f(\tau)) \, d\tau = H(f_0).
\eeqn
From this, it also follows that any equilibrium is a Maxwellian distribution
$$
\mu_{\rho,u,T} (v) := \frac{\rho}{(2\pi T)^{3/2}} e^{-\frac{|v-u|^2}{2T}},
$$
for some $\rho >0$, $u\in \R^3$ and $T>0$. 

It is then expected that any solution $f(t,\cdot)$ converges towards the Maxwellian equilibrium $\mu_{\rho_f, u_f, T_f}$ when $t\to +\infty$, where $\rho_f$ is the density of the gas, $u_f$ the mean velocity and $T_f$ the temperature, defined by
$$
\rho_f = \int f(v), \quad
u_f = \frac{1}{\rho} \int v f(v),\quad
T_f = \frac{1}{3\rho}\int |v-u|^2 f(v),
$$
and these quantities are defined by the initial datum $f_0$ thanks to the conservation properties of the Landau operator \eqref{eq:cons}. 

\smallskip

We shall always assume that $f_0$ is a nonnegative function with finite mass, energy and entropy, more precisely
$$
\int f_0 = M_0 < \infty, \quad \int |v|^2 f_0 = E_0 < \infty, \quad \int f_0 \log f_0 = H_0 < \infty,
$$
and it is classical that this implies
\beqn\label{eq:f0}
f_0 \in L^1_2 \cap L \log L, \qquad L \log L := \left\{ f \in L^1 \mid  \int |f| \log ( |f|) < \infty  \right\}. 
\eeqn
Furthermore, we may only consider the case of initial datum $f_0$ satisfying
\beqn\label{f0}
f_0 \in L^1_{1,0,1} := \{ f \in L^1 \mid \rho_f=1, \; u_f = 0, \; T_f=1  \},
\eeqn
the general case being reduced to \eqref{f0} by a simple change of coordinates.
We shall then denote $\mu(v) = (2\pi)^{-3/2} e^{-|v|^2/2}$ the standard Gaussian distribution in $\R^3$, which corresponds to the Maxwellian with same mass, momentum and energy of $f_0$.

\smallskip

We can linearise the Landau equation around the equilibrium $\mu$, with the perturbation $ f(t,v)=\mu (v) + h(t,v)$, which satisfies at the first order the \emph{linearised Landau equation}
\beqn\label{eq:lin}
\left\{
\begin{array}{rcl}
\partial_t h  &=&  \LL h  \\
h_{|t=0} &=& h_0,
\end{array}
\right.
\eeqn
where the initial datum is defined by $h_0 =f_0 - \mu$, and 
where the linearised Landau operator $\LL$ is given by
\beqn\label{eq:oplandaulin}
\LL h= Q(\mu,h)+Q(h,\mu).
\eeqn
Furthermore, from the conservation properties \eqref{eq:cons}, we observe that the null space of $\LL$ has dimension $5$ and is given by (see e.g. \cite{DL,Guo,BM,M,MS})
\beqn\label{eq:kerLL}
\NN(\LL)  = \mathrm{Span} \{\mu, v_1\mu,v_2 \mu, v_3\mu, |v|^2 \mu\}.
\eeqn
Consider the weighted Hilbert space $L^2(\mu^{-1/2})$ associated with the following scalar product and norm
$$
\la h , g \ra_{L^2(\mu^{-1/2})} := \int h g \, \mu^{-1}
\quad\text{and}\quad
\| h \|_{L^2(\mu^{-1/2})}^2 := \int |h|^2\, \mu^{-1}.
$$
A simple computation gives
$$
\bal
&\la \LL h , h \ra_{L^2(\mu^{-1/2})} \\
&\qquad
= -\frac12 \iint a_{ij}(v-v_*) \{ \partial_i(\mu^{-1} h) - \partial_{*i}(\mu^{-1}_* h_*)  \} \{ \partial_j(\mu^{-1} h) - \partial_{*j}(\mu^{-1}_* h_*)  \}
\, \mu_* \, \mu \, dv_* \, dv \\
&\qquad \le 0,
\eal
$$
which implies that $\LL$ is self-adjoint on $L^2(\mu^{-1/2})$ and, moreover, that the spectrum of $\LL$ in $L^2(\mu^{-1})$ is included in $\R_-$.

\subsection{Existing results}
Let us mention known results concerning the long-time behaviour of solutions to the Landau equation (and for a more detailed presentation we refer to \cite{KC4}).

In the Maxwellian molecules case $\gamma=0$, Villani~\cite{Vi1} proves an exponential in time convergence to equilibrium. For hard potentials $\gamma \in (0,1]$, Desvillettes and 
Villani~\cite{DesVi2} obtain a polynomial in time convergence to equilibrium, and more recently we prove in \cite{KC4} an optimal exponential decay to equilibrium. Moreover, Toscani and Villani~\cite{TosVi-slow} also prove a decay to equilibrium polynomially in time, in the case of \emph{mollified} soft potentials $\gamma \in (-3,0)$, which corresponds to replace $|z|^{\gamma + 2}$ in \eqref{eq:aij} by a mollified function $\Psi(z)$ truncating the singularity at the origin (see Section~\ref{ssec:poly} for more details). 
It is worth mentioning that all the results from \cite{Vi1,DesVi2,TosVi-slow} above are purely nonlinear and based on an entropy method.

\smallskip

Another approach for studying the long-time behaviour consists in considering the linearised equation around the equilibrium \eqref{eq:lin}, which has been investigated by several authors. Summarising results of Degond and Lemou~\cite{DL}, Guo~\cite{Guo}, Baranger and Mouhot~\cite{BM}, Mouhot~\cite{M}, Mouhot and Strain~\cite{MS}, we have the following proposition:

\begin{prop}\label{prop:gap}
Let $\gamma \in [-2,1]$. There exists a constructive constant $\lambda_0 >0$ (spectral gap) such that, for any $h \in L^2(\mu^{-1/2})$ with $h \in \NN(\LL)^{\perp}$,
$$
\la \LL h , h \ra_{L^2(\mu^{-1/2})} \le  - \lambda_0 \| h \|_{L^2(\mu^{-1/2})}^2.
$$ 
As a consequence we obtain an exponential decay for the linearised Landau equation \eqref{eq:lin}: for any $t \ge 0$ and $h \in L^2(\mu^{-1/2})$, there holds
$$
\| e^{t \LL} h - \Pi_0 h \|_{L^2(\mu^{-1/2})} \le e^{- \lambda_0 t} \| h - \Pi_0 h\|_{L^2(\mu^{-1})},
$$
where $\Pi_0$ is the projection onto $\NN(\LL)$.

\end{prop}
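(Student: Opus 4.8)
The plan is to reduce the proposition to a single coercivity estimate for a self-adjoint operator on the flat space $L^2(\R^3)$, to establish that estimate in its sharp \emph{anisotropic} form, and then to read off both the spectral gap and the exponential decay. First I would symmetrise: writing $h=\mu^{1/2}g$ gives $\|h\|_{L^2(\mu^{-1/2})}=\|g\|_{L^2}$, so $\LL$ is unitarily equivalent to a self-adjoint operator $\bar\LL\le0$ on $L^2(\R^3)$, whose kernel is $\mathrm{Span}\{\mu^{1/2},v_1\mu^{1/2},v_2\mu^{1/2},v_3\mu^{1/2},|v|^2\mu^{1/2}\}$ by \eqref{eq:kerLL}, and whose Dirichlet form $\DD(g):=\la-\bar\LL g,g\ra_{L^2}$ is the nonnegative bilinear quantity obtained from the formula for $\la\LL h,h\ra_{L^2(\mu^{-1/2})}$ computed above via the substitution $h=\mu^{1/2}g$ (it is a nonnegative quadratic form in the fields $(\partial_i+v_i/2)g$ and their $v_*$-counterparts). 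In these terms the desired inequality becomes $\DD(g)\ge\lambda_0\,\|g\|_{L^2}^2$ for every $g$ orthogonal to $\NN(\bar\LL)$.

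The analytic heart is the behaviour of the regularised diffusion matrix $\bar\sigma^{ij}(v):=(a_{ij}*\mu)(v)$: it is smooth and positive definite, its smallest eigenvalue is $\asymp\la v\ra^{\gamma}$ (and bounded below on compact sets), its other two eigenvalues are $\asymp\la v\ra^{\gamma+2}$, and $\bar\sigma^{ij}(v)v_iv_j\asymp\la v\ra^{\gamma+2}$ for $|v|\gtrsim1$. With the anisotropic norm
$$
\|g\|_\sigma^2:=\int_{\R^3}\Bigl(\bar\sigma^{ij}\,\partial_i g\,\partial_j g+\tfrac14\,\bar\sigma^{ij}v_iv_j\,g^2\Bigr)\,dv ,
$$
this is exactly where the hypothesis $\gamma\ge-2$ enters: then the zeroth-order weight satisfies $\la v\ra^{\gamma+2}\ge1$ for $|v|\gtrsim1$, which together with the (locally uniformly elliptic) gradient term yields the Poincaré-type bound $\|g\|_\sigma^2\ge c\,\|g\|_{L^2}^2$ for some $c>0$; for $\gamma<-2$ the large-velocity part of this weight degenerates, the bound fails, and $0$ enters the essential spectrum, so no gap can hold.

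The real work is the coercivity of the Dirichlet form: there is a constructive $c_0>0$ such that
$$
\DD(g)\ \ge\ c_0\,\|(I-\Pi_0)g\|_\sigma^2
$$
for all $g$ in the form domain, with $\Pi_0$ the $L^2$-orthogonal projection onto $\NN(\bar\LL)$; this is the content of \cite{DL,Guo,BM,M,MS}. The scheme is to decompose $\bar\LL=-A+K$, where $\la Ag,g\ra_{L^2}\asymp\|g\|_\sigma^2$ — the piece obtained by keeping only the ``diagonal'' ($v$-only) contribution to $\DD$, controlled by the anisotropic ellipticity of $\bar\sigma$ above — and $K$ is an integral operator with smooth, rapidly decaying kernels; one then shows that $K$ is $\sigma$-form bounded with arbitrarily small relative bound modulo a compactly supported error, i.e.\ for every $\eta>0$ there exist $R,C_\eta>0$ with $|\la Kg,g\ra_{L^2}|\le\eta\,\|g\|_\sigma^2+C_\eta\,\|g\|_{L^2(B_R)}^2$, which combined with $\bar\LL\le0$ and the explicitly known kernel gives the displayed inequality. \emph{This is the step I expect to be the main obstacle}: both the pointwise anisotropic bounds on $\bar\sigma$ and, above all, the treatment of the non-local part $K$ as a lower-order, essentially compact perturbation are delicate, and make up the bulk of the cited works.

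Granting this, the spectral gap is immediate: combining the last two displays gives $\DD(g)\ge c_0c\,\|(I-\Pi_0)g\|_{L^2}^2$, which on undoing the conjugation is precisely $\la\LL h,h\ra_{L^2(\mu^{-1/2})}\le-\lambda_0\,\|h\|_{L^2(\mu^{-1/2})}^2$ on $\NN(\LL)^\perp$ with $\lambda_0:=c_0c$. (If one only wants existence of a gap without tracking constants, one argues by contradiction: were $\lambda_0=0$, pick $g_n\perp\NN(\bar\LL)$ with $\|g_n\|_{L^2}=1$ and $\DD(g_n)\to0$; the coercivity estimate bounds $\|g_n\|_\sigma$, so after extraction $g_n\to g$ in $L^2_{\mathrm{loc}}$ by Rellich, while the compactly supported error term in that estimate keeps a fixed fraction of the mass in a fixed ball, so $g\ne0$; weak lower semicontinuity of $\DD$ forces $\DD(g)=0$, i.e.\ $g\in\NN(\bar\LL)$, contradicting $g\perp\NN(\bar\LL)$.) Finally, since $\bar\LL$ is self-adjoint and nonpositive it generates a contraction semigroup that commutes with $\Pi_0$ and fixes its range; hence, writing $g_\perp(t):=e^{t\bar\LL}(I-\Pi_0)g\in\NN(\bar\LL)^\perp$, the spectral gap gives $\frac{d}{dt}\|g_\perp(t)\|_{L^2}^2=2\la\bar\LL g_\perp(t),g_\perp(t)\ra_{L^2}\le-2\lambda_0\|g_\perp(t)\|_{L^2}^2$, and Grönwall together with $h=\mu^{1/2}g$ delivers the stated exponential decay for $e^{t\LL}$.
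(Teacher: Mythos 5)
Your proposal follows essentially the same route as the paper: the paper states Proposition~\ref{prop:gap} purely as a summary of the cited works \cite{DL,Guo,BM,M,MS} with no proof of its own, and your argument likewise defers the one genuinely hard ingredient (the anisotropic coercivity of the Dirichlet form) to those same references. The parts you do supply — the symmetrisation $h=\mu^{1/2}g$, the role of $\gamma\ge-2$ in making the $\sigma$-norm control the $L^2$-norm, and the Gr\"onwall deduction of the semigroup decay from the gap — are correct and are exactly the standard steps left implicit in the paper.
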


\subsection{Main results and strategy}
Let us define the notion of weak solution we consider in this paper.

\begin{definition}[Weak solutions \cite{Vi2}]
Let $\gamma \in [-2,1]$ and consider a nonnegative $f_0 \in L^1_2 \cap L \log L$. 
We say that $f$ is a weak solution of the Cauchy problem \eqref{eq:landau} if the following conditions are fulfilled:

\begin{enumerate}[($i$)]

\item $f\ge 0$, $f \in C ([0,\infty); \DD') \cap L^\infty([0,\infty); L^1_2 \cap L \log L) \cap L^1_{loc}([0,\infty); L^1_{2+\gamma})$;

\item $f(0) = f_0$;

\item for any $t\ge 0$
$$
\int f(t) \varphi = \int f_0 \varphi \quad\text{for}\quad \varphi(v) = 1, v , |v|^2;
\quad\text{and}\quad
H(f(t)) + \int_0^t D(f(\tau)) \le H(f_0);
$$

\item $f$ verifies \eqref{eq:landau} in the distributional sense: for any $\varphi \in C([0,\infty); C^\infty_c)$, for any $t \ge 0$,
$$
\int f(t) \varphi(t) - \int f_0 \varphi(0) - \int_0^t \int f(\tau) \partial_t \varphi(\tau)
= \int_0^t \int Q(f(\tau), f(\tau)) \varphi(\tau),
$$ 
where the last integral in the right-hand side is defined by 
$$
\int Q(f,f) \varphi = \frac12 \iint a_{ij}(v-v_*) (\partial_{ij} \varphi + \partial_{ij}\varphi_*) \, f_* f 
+ \iint b_{i}(v-v_*) (\partial_{i} \varphi - \partial_{i}\varphi_*) \, f_* f .
$$

\end{enumerate}
It is observed in \cite{Vi2} that these formulae make sense as soon as $f$ satisfies ($i$) and $\varphi \in W^{2,\infty}(\R^3)$.

\end{definition}

In the case of moderately soft potentials $\gamma \in (-2,0)$, 
it is proven in \cite{Vi2} that if $f_0 \in L^1_2 \cap L \log L$ there exists a global weak solution. If moreover we assume $f_0 \in L^1_{k}$, with $k > \gamma^2/(2+\gamma)$, then the weak solution is unique \cite[Corollary 4]{FG}.

\medskip

We can now state our main results on the rate of convergence to equilibrium: a polynomial convergence in Theorem~\ref{thm:main0} and then an exponential convergence in Theorem~\ref{thm:main}.

\begin{thm}[Polynomial convergence]\label{thm:main0}
Let $\gamma \in (-2,0)$ and $ f_0 \in L^1_{k+8-3\gamma/4} \cap L \log L$ with $k>7|\gamma|/2$. Then there exists a weak solution $f$ to the Landau equation associated to $f_0$ such that
$$
\forall\, t\ge 0, \qquad
H( f(t) | \mu ) \le C (1+t)^{-\frac{k}{|\gamma|} + \frac72},
$$
for some constructive constant $C>0$ and where $H(f | \mu) := \int f \log (f / \mu)$ is the relative entropy of $f$ with respect to $\mu$.

\end{thm}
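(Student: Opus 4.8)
The plan is to follow the Desvillettes--Villani style entropy method, adapted to the soft potential case by compensating for the degeneracy of the collision operator at large velocities with high moments and a priori estimates on those moments. The core is the combination of two ingredients: a quantitative entropy-entropy dissipation inequality of the form $D(f) \gtrsim K(f)^{-1}\,\Phi\bigl(H(f|\mu)\bigr)$ and good control of the functional $K(f)$ along the flow.

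First, I would recall (or establish) the entropy dissipation lower bound for soft potentials: there exists $\theta \in (0,1)$ and, for every $\eps>0$, a constant $C_\eps$ depending on upper bounds on mass, energy, entropy and on a high moment $\int f\,\la v\ra^{s}$ and possibly a Sobolev/$H^1$-type bound, such that
\begin{equation*}
D(f) \;\ge\; C_\eps \, \bigl(H(f|\mu)\bigr)^{1+\eps} \;-\; (\text{small correction}),
\end{equation*}
or more precisely a bound of the type $D(f) \ge c\,\la v\ra\text{-weighted Fisher information}$ degraded by the $|v|^{\gamma}$ weight, which after using the Maxwellian lower bound (or the Csisz\'ar--Kullback type control) and interpolating the lost weight against a high moment gives $D(f) \ge c(1+t)^{-a}\,H(f|\mu)^{1+\eps}$ for suitable $a,\eps$ once the moment is shown to grow at most polynomially. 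The key reduction is: on the region $|v|\le R$ the operator behaves like the hard-potential case and one gets a genuine coercive term; on $|v|>R$ one pays a factor $R^{\gamma}$ but absorbs it using $\int_{|v|>R} f\,\la v\ra^m \le R^{-(s-m)}\int f\la v\ra^{s}$, optimising in $R$.

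Second, and this is where the hypothesis $f_0\in L^1_{k+8-3\gamma/4}$ enters, I would prove the propagation (with at most polynomial growth, in fact I expect uniform-in-time boundedness) of moments $\int f(t)\,\la v\ra^{m}$ for $m$ up to roughly $k$, together with the appearance and propagation of an $H^1$ or $L^p$-type regularity bound needed to make the entropy dissipation estimate quantitative. For soft potentials moment estimates are delicate because $Q(f,f)$ only controls $\la v\ra^{2+\gamma}$-weighted quantities; the standard trick is to use the $\dot H^1$ information coming from $\int_0^t D(f)\,d\tau \le H(f_0)$ to upgrade integrability, then interpolate. One then runs a Gronwall argument on $Y_m(t):=\int f(t)\la v\ra^m$ of the form $\dot Y_m \le -c\,Y_{m+\gamma} + C\,Y_{m'}$, and closes it using interpolation and the lower moment bounds; the loss of $8-3\gamma/4$ derivatives of moment in the hypothesis is exactly the budget consumed by these interpolations and by the regularity bootstrap. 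These a priori bounds are to be justified on a suitable approximation scheme and passed to the limit, which is why the theorem only asserts existence of \emph{a} weak solution with the stated decay.

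Finally, I would combine the two: writing $h(t):=H(f(t)|\mu)$, the H-theorem identity \eqref{eq:Ht} gives $\dot h = -D(f(t))$, and plugging in $D(f(t)) \ge c\,(1+t)^{-a} h(t)^{1+\eps}$ yields the differential inequality $\dot h \le -c\,(1+t)^{-a} h^{1+\eps}$, which integrates to $h(t) \le C(1+t)^{-(1-a)/\eps}$; choosing the moment $k$ large enough makes $a$ as close to $0$ and $\eps$ as close to $0$ as the moment budget allows, and bookkeeping the exponents gives precisely the rate $-k/|\gamma| + 7/2$. The main obstacle I anticipate is the second step: obtaining the moment and regularity a priori estimates for soft potentials with explicit, constructive constants and with enough room to feed the interpolation in step one — in particular tracking exactly how many moments are lost so that the final exponent comes out as $-k/|\gamma|+7/2$ rather than something worse. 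The entropy-dissipation inequality itself (step one) is essentially a careful localisation of the Desvillettes--Villani argument and should be more routine, modulo the $|v|^{\gamma}$ bookkeeping.
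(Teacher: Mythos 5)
Your overall strategy is indeed the one used in the paper: the Toscani--Villani entropy--entropy dissipation inequality (their Proposition~4 applies verbatim to the true soft potential, since it only requires the lower bound $\Psi(z)/|z|^2 \ge c\la z\ra^\gamma$), combined with polynomially growing a priori bounds and a Gronwall argument, yields $H(f(t)|\mu)\le C(1+t)^{-k/|\gamma|+\theta}$ as soon as $\|f(t)\|_{L^1_{k+2}}+J_{k+2}(f(t))\le C(1+t)^{\theta}$, where $J_{k+2}(f)=\|\la v\ra^{k+2}\nabla\sqrt f\|_{L^2}^2$. Your final ODE integration and the exponent arithmetic ($\eps=|\gamma|/k$, $a=\theta|\gamma|/k$) are also consistent with this.

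The genuine gap is in your second step, which is precisely where the content of the theorem lies. The quantity that must be propagated is not ``moments plus an $H^1$ or $L^p$-type regularity bound'' but the weighted Fisher information $J_{k+2}(f)$, which (via the Toscani--Villani bound $I(g)\lesssim \|g\|_{H^2_{3/2+\eps}}$) forces one to prove a polynomially growing \emph{weighted $H^2$} estimate; the exponent $7/2$ in the statement is exactly the growth rate $\|f(t)\|_{H^2_{k+4}}\le C(1+t)^{7/2}$, obtained through the chain: $L^1$ moments grow linearly, then $\|f\|^2_{L^2_k}$ grows linearly, then $\|f\|^2_{H^1_k}\lesssim(1+t)^2$, then $\|f\|^2_{H^2_k}\lesssim(1+t)^{7}$, and the weight $k+8-3\gamma/4$ in the hypothesis is the moment budget consumed along this chain. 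Your proposal neither derives these exponents nor offers a mechanism that could: the route you suggest, using $\int_0^t D(f)\,d\tau\le H(f_0)$ to upgrade integrability and then interpolating, only provides time-integrated control of a Fisher-type quantity of $\sqrt f$ and cannot produce pointwise-in-time weighted $H^1/H^2$ bounds with explicit polynomial rates. The paper obtains them instead from weighted parabolic energy estimates exploiting the uniform ellipticity $a*f\ge K\la v\ra^{\gamma}I$ (Desvillettes--Villani coercivity), trilinear estimates for the singular kernel $|v-v_*|^{\gamma}$ (Pitt/Hardy--Littlewood--Sobolev), Nash-type inequalities and weighted Sobolev interpolation; this is the new technical work behind the theorem. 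A secondary inaccuracy: your expectation of uniform-in-time moment bounds is not available a priori for soft potentials; only linear-in-time growth can be proved at this stage (uniform bounds are recovered only a posteriori, as a consequence of the convergence itself), though polynomial growth is indeed sufficient for the argument.
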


The proof of Theorem~\ref{thm:main0} follows the strategy introduced by Toscani and Villani~\cite{TosVi-slow} (see Section~\ref{ssec:poly} for more details), in which, as already explained, a polynomial in time convergence to equilibrium for mollified soft potentials is proven. This strategy was developed in order to treat the trend to equilibrium issue for kinetic equations with relatively bad control of the distribution tails (as for Boltzmann and Landau-type equations with soft potentials) and they compensate the lack of uniform in time estimates by some precise logarithmic Sobolev inequalities. In order to use this strategy, we prove some new \emph{a priori} estimates for the evolution of weighted $L^1$ and Sobolev norms in Section~\ref{sec:apriori}. Then we prove Theorem~\ref{thm:main0} in Section~\ref{ssec:poly} using these \emph{a priori} estimates together with a functional inequality relying entropy and entropy-dissipation from \cite{TosVi-slow}.

\medskip

\begin{thm}[Exponential convergence]\label{thm:main}
Let $\gamma \in (-1,0)$ and $ f_0 \in L \log L \cap L^1(e^{\kappa \la v \ra^s})$ with $\kappa>0$ and $- \gamma < s< 2 + \gamma$. Then the unique weak solution $f$ to the Landau equation associated to $f_0$ satisfies
$$
\forall\, t\ge 0, \qquad
\| f(t) - \mu \|_{L^1} \le C e^{-\lambda_0 t},
$$
for some constructive constant $C>0$ and where $\lambda_0>0$ is the spectral gap of the associated linearised operator. 

\end{thm}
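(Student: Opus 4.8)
The strategy is to combine a perturbative analysis of the nonlinear equation around $\mu$ with the polynomial decay of Theorem~\ref{thm:main0}, the latter serving only to bring the solution into the perturbative regime. Writing $f = \mu + h$ and using $Q(\mu,\mu)=0$, the perturbation solves $\partial_t h = \LL h + Q(h,h)$; conservation of mass, momentum and energy forces $h(t)\in\NN(\LL)^{\perp}$ and $Q(h,h)\in\NN(\LL)^{\perp}$, hence $\Pi_0 h(t)=0$, for all $t\ge1$. I would work in a weighted (Sobolev-type) space $\EE$ built on the stretched-exponential weight $e^{\kappa_1\langle v\rangle^{s}}$ with $0<\kappa_1<\kappa$, chosen so that $\EE\hookrightarrow L^{1}$ and so that $Q(h,h)$ makes sense in $\EE$ given enough regularity on $h$. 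Besides this, the argument needs uniform-in-time control of the nonlinear flow: the (unique) weak solution regularises instantaneously and satisfies $\sup_{t\ge1}\|h(t)\|_{H^{m}_{\ell}(e^{\kappa_1\langle v\rangle^{s}})}<\infty$ for every $m,\ell$, together with $\sup_{t\ge0}\int f(t)\,e^{\kappa_1\langle v\rangle^{s}}\,dv<\infty$. The propagation of the exponential moment rests on a Povzner-type estimate for the Landau operator and is where $s<2+\gamma$ enters; the appearance and propagation of weighted Sobolev norms use the locally uniform ellipticity of $a*f$ (a consequence of the conservation laws) and the assumption $\gamma\in(-1,0)$.

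The heart of the matter is the exponential decay, with the \emph{sharp} rate, of the linearised semigroup $e^{t\LL}$ in $\EE$. I would decompose $\LL=\AA+\BB$ with $\AA h=M\chi_{R}h$ a truncation ($\chi_R$ a smooth cutoff of the ball $B_R$, and $M,R$ large constants) and $\BB=\LL-\AA$. The key computation is that, in the weighted space, the stretched-exponential weight interacts with the first-order (drift) part of the diffusion operator — where $b*\mu\sim-\langle v\rangle^{\gamma}v$ is paired against $\nabla\log\big(e^{\kappa_1\langle v\rangle^{s}}\big)\sim\langle v\rangle^{s-2}v$ — to produce a dissipative contribution growing like $\langle v\rangle^{\gamma+s}$ at infinity; since $\gamma+s>0$ this term dominates, and by taking $R,M$ large enough one gets $\|e^{t\BB}\|_{\EE\to\EE}\le C e^{-at}$ for \emph{any} prescribed rate $a$, in particular $a>\lambda_0$. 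Since $\AA$ is bounded from $\EE$ into the Gaussian-weighted (Sobolev) space where the spectral gap of Proposition~\ref{prop:gap} is available — to which the gap lifts elementarily since in the Gaussian weight $\BB$ is strongly dissipative — the enlargement/factorisation method of Gualdani–Mischler–Mouhot transfers that gap to $\EE$, yielding
$$
\|e^{t\LL}(I-\Pi_0)\|_{\EE\to\EE}\le C\,e^{-\lambda_0 t},\qquad t\ge0.
$$
The optimal rate $\lambda_0$ (rather than merely $\lambda_0-\varepsilon$) is retained precisely because $a>\lambda_0$, so the time-convolutions of $e^{-\lambda_0\cdot}$ and $e^{-a\cdot}$ arising in the factorisation carry no logarithmic loss.

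Finally, for $t_0\ge1$ and $t\ge t_0$ I would use Duhamel's formula $h(t)=e^{(t-t_0)\LL}h(t_0)+\int_{t_0}^{t}e^{(t-\tau)\LL}Q(h(\tau),h(\tau))\,d\tau$. A bilinear estimate $\|Q(g,h)\|_{\EE}\le C\|g\|_{\EE}\,\|h\|_{\widetilde\EE}$, where $\widetilde\EE$ is a space of higher regularity and weight (the polynomial growth of $a*g$ being absorbed into the weight of the $\widetilde\EE$ factor; alternatively one uses a regularising estimate for the semigroup), combined with the uniform bounds above gives $\|Q(h(\tau),h(\tau))\|_{\EE}\le C\|h(\tau)\|_{\EE}\,\|h(\tau)\|_{\widetilde\EE}$. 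On the other hand, Theorem~\ref{thm:main0} applied with $k$ arbitrarily large — permissible since $f_0\in L^{1}(e^{\kappa\langle v\rangle^{s}})$ has all polynomial moments — gives $H(f(t)|\mu)=O\big((1+t)^{-N}\big)$ for every $N$, hence by the Csisz\'ar–Kullback–Pinsker inequality $\|h(t)\|_{L^1}=O\big((1+t)^{-N}\big)$, and interpolating with the uniform bounds yields $\|h(t)\|_{\EE}+\|h(t)\|_{\widetilde\EE}=O\big((1+t)^{-N}\big)$ for every $N$. Thus these norms are as small as we wish on $[t_0,\infty)$ once $t_0$ is large; inserting this into the Duhamel inequality and running a Gronwall argument first produces an exponential rate $\lambda_1\in(\lambda_0/2,\lambda_0)$ in $\EE$ and $\widetilde\EE$, and a second pass — now using $\|Q(h,h)\|_{\EE}=O(e^{-2\lambda_1 t})$ with $2\lambda_1>\lambda_0$ in the Duhamel integral together with the sharp semigroup bound — upgrades the rate to $\lambda_0$, so that $\|f(t)-\mu\|_{L^1}\le\|h(t)\|_{\EE}\le C e^{-\lambda_0 t}$. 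The principal obstacle is the second ingredient: the choice of a stretched-exponential weight is unavoidable (polynomial weights give a rate degenerating at infinity for soft potentials), the weight-versus-diffusion computation that makes $\BB$ arbitrarily dissipative is exactly what forces the hypothesis $s>-\gamma$, and the factorisation theory must be applied in a form that preserves the optimal rate; the uniform regularity and moment estimates, while technical, follow established parabolic arguments adapted to the Landau structure.
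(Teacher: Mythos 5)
Your overall architecture is the same as the paper's (extend the $L^2(\mu^{-1/2})$ spectral gap to stretched-exponential weighted spaces by the Gualdani--Mischler--Mouhot factorisation, use the polynomial entropy decay of Theorem~\ref{thm:main0} to enter a small neighbourhood of $\mu$, then close a Duhamel/Gronwall argument at the sharp rate $\lambda_0$), but the factorisation you propose has a genuine gap. You take $\AA=M\chi_R$ and $\BB=\LL-M\chi_R$, and the dissipativity computation you invoke --- pairing $\bar b=b*\mu\sim-\la v\ra^{\gamma}v$ against $\nabla\log m\sim \kappa s\la v\ra^{s-2}v$ to produce a $-\la v\ra^{\gamma+s}$ contribution --- concerns only the local drift-diffusion part $Q(\mu,h)=\nabla\cdot\{\bar a\nabla h-\bar b h\}$. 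Your $\BB$ also contains the nonlocal piece $Q(h,\mu)=(a_{ij}*h)\partial_{ij}\mu-(c*h)\mu$, which is not a multiplier and is nowhere controlled in your argument; in particular $c*h$ involves the singular kernel $|z|^{\gamma}$, so even its boundedness on $L^p(m)$ is a nontrivial convolution estimate. This is precisely why the paper puts $Q(\cdot,\mu)$ into $\AA$ (i.e.\ $\AA=Q(\cdot,\mu)+M\chi_R$, $\BB=Q(\mu,\cdot)-M\chi_R$) and proves Lemma~\ref{lem:A0} (Pitt/Hardy--Littlewood--Sobolev estimates) to show that this part maps the large space into Gaussian-weighted spaces. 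Also, your remark that the gap ``lifts elementarily since in the Gaussian weight $\BB$ is strongly dissipative'' glosses over a real difficulty: in $L^2(\mu^{-1/2})$ the leading $\la v\ra^{\gamma+2}$ coefficient in the multiplier cancels, and one must recycle part of the diffusion term to recover coercivity (Lemma~\ref{lem:hypoBBmu}).

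The nonlinear step also rests on an unproved input: the uniform-in-time bounds $\sup_{t\ge1}\|h(t)\|_{H^m_\ell(e^{\kappa_1\la v\ra^s})}<\infty$ in \emph{exponentially} weighted Sobolev spaces. What is actually established are uniform polynomially weighted Sobolev bounds (Proposition~\ref{prop:H12n}) and uniform stretched-exponential $L^1$ moments (Proposition~\ref{prop:momentuniform}, itself relying on Theorem~\ref{thm:main0}); the paper therefore runs Duhamel in $L^1(m)$ only and controls the quadratic term by the interpolation $\|h\|_{H^2(m^{3/2})}\lesssim\|h\|_{L^1(m^3)}^{1/2}\|h\|_{H^{13/2}}^{1/2}$, which yields the superlinear bound $\|Q(h,h)\|_{L^1(m)}\lesssim\|h\|_{L^1(m)}^{1+1/4}$ up to uniformly bounded factors, and then a single Gronwall argument \`a la Mouhot gives the rate $\lambda_0$ directly. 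Your two-pass bootstrap would in addition require the semigroup decay of Theorem~\ref{thm:trou} in the higher-regularity space $\widetilde\EE$, which the linear theory (proved only in $L^p(m)$, $1\le p\le2$) does not supply. If you repair the factorisation as above and replace the asserted exponential-weight Sobolev bounds by this interpolation device, your plan coincides with the paper's proof.
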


\begin{rem}
The restriction $\gamma \in (-1,0)$ comes from the fact that we need $s + \gamma >0$ in order to prove the "spectral gap/semigroup decay" extension theorem for the linearised equation (see Theorem~\ref{thm:trou}) and $s < \gamma + 2$ to prove the propagation of stretched exponential moments (see Lemma~\ref{lem:momentsexp}).
\end{rem}

The strategy to prove this theorem is based on:

\begin{enumerate}[(1)]

\item New exponential decay estimates (with sharp rate) for the semigroup generated by the linearised Landau operator $\LL$ in various $L^p$-spaces with stretched exponential weight, using a method developed in \cite{GMM}. This question is addressed in Section~\ref{sec:linear}.

\item New \emph{a priori} estimates for the nonlinear equation proved in Section~\ref{sec:apriori} and the convergence to equilibrium from Theorem~\ref{thm:main0} proven in Section~\ref{ssec:poly}.

\item A ``coupling method" in order to connect the linearised theory with the nonlinear one: for small times we use the polynomial convergence from Theorem~\ref{thm:main0}; then for large times we use (2) to prove that the solution enters in a suitable neighbourhood of the equilibrium, in which the linear part is dominant, and we have an optimal exponential decay from (1). This is proven in Section~\ref{ssec:exp}.

\end{enumerate}

It is worth mentioning that this strategy has been used by several authors and for different equations in order to prove an exponential in time convergence to equilibrium. It was first introduced by Mouhot~\cite{Mouhot2} for the homogeneous Boltzmann equation for hard potentials with Grad's cut-off. This same approach was later used by Gualdani, Mischler and 
Mouhot~\cite{GMM} for the inhomogeneous Boltzmann equation for hard spheres on the torus and for the Fokker-Planck equation, and also by Mischler and Mouhot \cite{MiMo-FP} for Fokker-Planck equations. More recently, the author~\cite{KC4} used it for the homogeneous Landau equation with hard potentials, and Tristani~\cite{Tristani} for the homogeneous Boltzmann equation for hard potentials without cut-off.

\subsection{Notations}
Let $m:\R^d \to \R^+$ be a weight function. For any $1 \le p \le \infty $ we define the weighted space $L^p(m)$ associated with the norm
$$
\| f \|_{L^p(m)} := \| m f \|_{L^p}.
$$
We also define higher-order weighted Sobolev spaces $W^{\ell,p}(m)$ associated with the norm
$$
\| f \|_{W^{\ell,p}(m)}^p := \sum_{|\alpha|\le \ell} \| \partial^\alpha f \|_{L^p(m)}^p, \qquad 1 \le p < \infty,
$$
with the usual modification for $p=\infty$ and for homogeneous spaces $\dot W^{\ell,p}(m)$.
When $m = \la v \ra^k := (1 + |v|^2)^{k/2}$ is a polynomial weight, we denote $W^{\ell,p}_k := W^{\ell,p}( \la v \ra^k)$.

\smallskip

Let $X,Y$ be Banach spaces and consider a linear operator $\Lambda : X \to Y$. We shall denote by $\SS_{\Lambda}(t) = e^{t\Lambda}$ the semigroup generated by $\Lambda$. Moreover we denote by $\BBB(X,Y)$ the space of bounded linear operators from $X$ to $Y$ and by $\| \cdot \|_{\BBB(X,Y)}$ its norm operator, with the usual simplification $\BBB(X) = \BBB(X,X)$.

\section{The linearised operator}\label{sec:linear}

In this section we shall denote
\beqn\label{eq:barabc}
\bar a_{ij}(v) = a_{ij}*\mu, \quad
\bar b_i(v) = b_i * \mu, \quad
\bar c(v) = c*\mu.
\eeqn

Let us now make our assumptions on the weight function $m=m(v)$:

\medskip

\noindent{\bf(W) Stretched exponential weight.} 
We consider a weight function $m = \exp(\kappa \la v \ra^s)$ with $\kappa>0$, $0<s< 2$ and $s+\gamma>0$.

\medskip

We are now able to state the main result of this section, which extends to various weighted $L^p$ spaces the decay of the semigroup $\SS_\LL(t)$ generated by the operator $\LL$, known to hold in $L^2(\mu^{-1/2})$ by Proposition~\ref{prop:gap}.

\begin{thm}\label{thm:trou}
Let $\gamma \in (-2,0)$, $1 \le p \le 2$ and a weight function $m$ satisfying (W). Then there exists a constant $C>0$ such that, for all $t \ge 0$ and any $h \in L^p(m)$, there holds
$$
\| \SS_\LL(t) h - \Pi_0 h \|_{L^p(m)} \le C e^{-\lambda_0 t} \| h - \Pi_0 h \|_{L^p(m)},
$$
where $\Pi_0$ is the projection onto $\NN(\LL)$ and $\lambda_0 >0$ is the spectral gap of $\LL$ on $L^2(\mu^{-1/2})$.

\end{thm}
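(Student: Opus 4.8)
The plan is to apply the enlargement-of-the-functional-space method of Gualdani--Mischler--Mouhot \cite{GMM}, which upgrades the decay of $\SS_\LL(t)$ from the small "reference" space $E := L^2(\mu^{-1/2})$, where Proposition~\ref{prop:gap} gives the spectral gap, to the larger spaces $\EE := L^p(m)$. The abstract principle is this: if $\LL$ admits a decomposition $\LL = \AA + \BB$ such that (i) $\BB$ is hypodissipative on $\EE$ with rate strictly better than $-\lambda_0$ (say $\BB + \lambda'$ is dissipative for some $\lambda' > \lambda_0$), (ii) $\AA$ is $\BB$-bounded and, more crucially, the iterated convolution $(\AA \SS_\BB)^{(*n)}(t)$ maps $\EE$ into $E$ with an $O(e^{-\lambda' t})$ bound for some finite $n$, and (iii) $\LL$ generates a semigroup on $\EE$ with the correct spectral structure, then the decay in $E$ transfers to $\EE$ with the same rate $\lambda_0$. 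So the bulk of the work is producing the splitting and verifying (i)--(iii).

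The first step is to choose the decomposition. Following the hard-potential case \cite{KC4}, I would write $\LL h = Q(\mu,h) + Q(h,\mu)$ in the form $\LL h = \bar a_{ij}\partial_{ij}h + \bar b_i\cdot(\text{first order}) - \bar c\, h + Q(h,\mu)$ using the notation \eqref{eq:barabc}, and then set $\BB := \LL - \AA$ where $\AA := M\chi_R$ is multiplication by a large constant $M$ times a smooth cutoff $\chi_R$ supported on a ball $\{|v|\le R\}$. The term $Q(h,\mu)$ is a genuinely lower-order (in fact compact-type, since $\mu$ is Schwartz) contribution and can be absorbed. The point of $-M\chi_R$ is to beat the "bad" constant coming from the zeroth-order coefficient $\bar c(v) \sim |v|^\gamma$ near the origin and the loss in the weighted integration by parts at large $v$.

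The second step — and this is where condition (W) with $s+\gamma>0$ is used — is the hypodissipativity of $\BB$ in $L^p(m)$. Here one computes $\frac{d}{dt}\|\SS_\BB(t)h\|_{L^p(m)}^p$ by testing the equation against $m^p |h|^{p-2}h$, integrating by parts in the diffusion term, and tracking all the error terms. The diffusion matrix $\bar a_{ij}(v)$ behaves like $|v|^{\gamma+2}$ at infinity, so the derivatives of the weight $m = e^{\kappa\la v\ra^s}$ produce terms of order $|v|^{\gamma+2}\cdot |v|^{2(s-1)} \sim |v|^{\gamma+2s}$ against a gain of order $|v|^{\gamma+2}\cdot|v|^{s-2}\cdot|v|^{s}\sim |v|^{\gamma+2s}$ with the correct sign from the leading term; a careful bookkeeping (as in the moment estimates for stretched exponentials) shows the net large-$v$ contribution is $\le -c\,|v|^{\gamma+s}$ up to lower order, hence goes to $-\infty$, which dominates everything provided $\gamma + s > 0$. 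Choosing $R$ and $M$ large enough then yields $\|\SS_\BB(t)\|_{\BB(L^p(m))} \le C e^{-\lambda' t}$ with $\lambda' > \lambda_0$. For $p\neq 2$ one also needs the $L^p$-dissipativity estimates, which are standard but tedious.

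The third step is the regularization/gain estimate: $\AA\SS_\BB(t)$, or a suitable iterate of it, must send $L^p(m)$ into the reference space $L^2(\mu^{-1/2})$. Since $\AA = M\chi_R$ has compact support and the Landau operator is parabolic (the diffusion $\bar a_{ij}$ is locally uniformly elliptic), $\SS_\BB(t)$ is locally smoothing; combined with the confinement from the weight one gets $\AA\SS_\BB(t): L^p(m) \to L^2(\mu^{-1/2})$ bounded by $C t^{-\theta} e^{-\lambda' t}$ for small $t$ and some $\theta<1$, for $p$ close to $2$, and then one takes an $n$-fold convolution to pass from $L^1(m)$ all the way up. One must also check the second part of the $GMM$ machinery: that $\LL - \Pi_0$ is invertible on the relevant space with the spectral picture intact (no spectrum in $\{\Re z > -\lambda_0\}$ other than $0$), which follows from the factorization argument of \cite{GMM} once the resolvent of $\BB$ and the boundedness of $\AA$ are in hand, together with Proposition~\ref{prop:gap} on $E$. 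Finally, one checks $\NN(\LL)$ and $\Pi_0$ are the same in $L^p(m)$ as in $L^2(\mu^{-1/2})$ — immediate since $\NN(\LL) \subset L^p(m)$ for every $p$ and $m$ of the stated form, as these are Schwartz functions.

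I expect the main obstacle to be the hypodissipativity estimate for $\BB$ in $L^p(m)$ with the stretched-exponential weight: the competition between the weight-derivative terms and the genuine dissipation is delicate precisely when $\gamma<0$, and it is exactly this computation that forces the hypothesis $s+\gamma>0$ in (W). The $L^p$ (rather than $L^2$) bookkeeping and the need for a quantitative — "constructive" — constant add to the technicality, but conceptually the $GMM$ framework does the rest.
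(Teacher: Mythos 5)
Your overall strategy is the one the paper uses: split $\LL=\AA+\BB$, prove that $\BB+\lambda$ is dissipative on $L^p(m)$ for arbitrarily large $\lambda$ by choosing $M,R$ large, prove that an iterated convolution $(\AA\SS_\BB)^{*n}(t)$ maps $L^p(m)$ into $L^2(\mu^{-1/2})$ with exponential decay, and then invoke the Gualdani--Mischler--Mouhot enlargement theorem together with Proposition~\ref{prop:gap}. Two remarks, one on a genuine gap and one on your splitting. The gap is in the crucial dissipativity computation. Your power counting is wrong, and with the orders you state the argument does not close: you claim both the quadratic weight term $\bar a\,\frac{\nabla m}{m}\frac{\nabla m}{m}$ and the good drift term are of size $|v|^{\gamma+2s}$, and yet conclude a net bound $-c\la v\ra^{\gamma+s}$; if the bad quadratic term really were of order $\la v\ra^{\gamma+2s}$ it would dominate the drift (since $\gamma+2s>\gamma+s$) and dissipativity would fail, or at best depend on smallness of $\kappa$, whereas the theorem holds for every $\kappa>0$. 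The missing ingredient is the anisotropy of $\bar a=a*\mu$ (the Degond--Lemou/Guo asymptotics of Lemma~\ref{lem:bar-aij}): the eigenvalue of $\bar a$ in the radial direction is $\ell_1(v)\sim 2\la v\ra^{\gamma}$, not $\la v\ra^{\gamma+2}$, and $\bar b=-\ell_1(v)v$. Since $\nabla m/m$ is radial, the quadratic term is only $O(\la v\ra^{\gamma+2s-2})$, hence strictly subordinate (as $s<2$), and the dominant balance is between the trace term $+2\kappa s J_{\gamma+2}\la v\ra^{s-2}\sim 2\kappa s\la v\ra^{\gamma+s}$ and the drift term $2\bar b\cdot\nabla m/m\sim-4\kappa s\la v\ra^{\gamma+s}$, giving $\varphi_{m,p}(v)\sim-2\kappa s\la v\ra^{\gamma+s}$; this is exactly where $s+\gamma>0$ enters. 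Without this structural input your "careful bookkeeping" cannot yield the sign you assert.

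On the splitting: you take $\AA=M\chi_R$ and keep $Q(\cdot,\mu)$ inside $\BB$, while the paper takes $\AA=Q(\cdot,\mu)+M\chi_R$ and $\BB=Q(\mu,\cdot)-M\chi_R$. Your choice is not unreasonable, since $Q(\cdot,\mu)$ is bounded on $L^p(m)$ with a constant independent of $M,R$ and so can be absorbed by enlarging $\lambda$; but it makes the regularisation step heavier: with the paper's choice, $\BB$ is a local Fokker--Planck-type operator with coefficients $\bar a,\bar b,\bar c$, for which the gain $\SS_\BB(t):L^1(m_1)\to L^2(m_0)$ follows from a global weighted Nash argument using the coercivity $\bar a\ge K\la v\ra^{\gamma}I$, and $\AA_0=Q(\cdot,\mu)$ carries Gaussian localisation so it lands in $L^q(\mu^{-1/2})$ by elementary convolution estimates (including the Hardy--Littlewood--Sobolev variant needed when $\gamma\in(-2,-3/2]$ and $q$ close to $2$). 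With your $\AA=M\chi_R$ you must instead establish a local $L^p\to L^2$ parabolic smoothing estimate for a semigroup whose generator contains the nonlocal term $Q(\cdot,\mu)$; this is plausible but is precisely the technical work you leave unaddressed, and you would still need the convolution estimates on $Q(\cdot,\mu)$ anyway to control that term in the energy and Nash arguments.
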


In order to prove this theorem we shall use the method of \emph{enlargement of the functional space of semigroup decay} developed by Gualdani, Mischler and Mouhot \cite{GMM}. Roughly speaking, if one knows some quantitative information on the semigroup decay associated with an operator $\LL$ in some \emph{small} space $E$, this method enables one to deduce this quantitative estimate on a \emph{larger} space $\EE \supset E$, when the operator $\LL$ satisfies some properties. In order to do that, we need to factorise $\LL = \AA + \BB$ and to prove some properties for these operators, namely that $\BB$ has a well localised spectrum (see Section~\ref{ssec:hypo}) and $\AA$ is regularising in some sense (see Section~\ref{ssec:regularizing}).

\subsection{Factorisation of the operator}

Using the form \eqref{eq:oplandau} of the operator $Q$, we decompose the linearised Landau operator $\LL$ defined 
in \eqref{eq:oplandaulin} as $\LL =  \AA_0 + \BB_0$, where we define
\beqn\label{eq:A0B0}
\bal
\AA_0 h &:= Q(h,\mu) = (a_{ij}* h)\partial_{ij}\mu - (c * h)\mu, \\
\BB_0 h &:= Q(\mu,h) =(a_{ij}* \mu)\partial_{ij}h - (c * \mu)h.
\eal
\eeqn
Consider a smooth nonnegative function $\chi \in C^\infty_c(\R^3)$ such that $0\leq \chi(v) \leq 1$, $\chi(v) \equiv 1$ for $|v|\leq 1$ and $\chi(v) \equiv 0$ for $|v|>2$. For any $R\geq 1$ we define $\chi_R(v) := \chi(R^{-1}v)$ and in the sequel we shall consider the function $M\chi_R$, for some constant $M>0$.
Then, we make the final decomposition of the operator $\LL$ as $\LL = \AA + \BB$ with
\beqn\label{eq:AB}
\AA  := \AA_0 + M \chi_R ,
\qquad
\BB  := \BB_0 - M\chi_R,
\eeqn
where $M$ and $R$ will be chosen later.

\subsection{Dissipativity properties}
\label{ssec:hypo}

We investigate in this section dissipativity properties of the operator $\BB$.
 
\medskip 
 
First of all, we state the following results concerning $\bar a_{ij}(v)$ (see \cite[Propositions 2.3 and 2.4, Corollary 2.5]{DL} and \cite[Lemma 3]{Guo}) that will be useful.
 
\begin{lem}\label{lem:bar-aij}
The following properties hold:

\begin{enumerate}[(a)]

\item The matrix $\bar a(v)$ has a simple eigenvalue $\ell_1(v)>0$ associated with the eigenvector $v$ and a double eigenvalue $\ell_2(v)>0$ associated with the eigenspace $v^{\perp}$. Moreover,
$$
\bal
\ell_1(v) &= \int_{\R^3} \left(1 - \left(\frac{v}{|v|}\cdot\frac{w}{|w|}   \right)^2   \right) |w|^{\gamma+2} \mu(v-w)\, dw\\
\ell_2(v) &= \int_{\R^3} \left(1 - \frac12 \left| \frac{v}{|v|}\times\frac{w}{|w|}  \right|^2   \right) |w|^{\gamma+2} \mu(v-w)\, dw .
\eal
$$
When $|v|\to +\infty$ we have
$$
\bal
\ell_1(v) &\sim  2 \la v \ra^\gamma \\
\ell_2(v) &\sim  \la v \ra^{\gamma+2} .
\eal
$$
If $\gamma\in (0,1]$ there exists $\ell_0 >0$ such that, for all $v\in\R^3$, 
$\min\{ \ell_1(v), \ell_2(v) \} \geq \ell_0$.

\item The function $\bar a_{ij}$ is smooth, for any multi-index $\beta\in \N^3$
$$
|\partial^\beta \bar a_{ij}(v)| 
\leq C_\beta \la v \ra^{\gamma+2-|\beta|}
$$
and
$$
\bal
\bar a_{ij}(v) \xi_i \xi_j &= \ell_1(v) |P_v \xi|^2 + \ell_2(v)|(I-P_v)\xi|^2, \\
\bar a_{ij}(v) v_i v_j  &= \ell_1(v) |v|^2 ,
\eal
$$
where $P_v$ is the projection on $v$, i.e.
$$
P_v \xi = \left( \xi \cdot \frac{v}{|v|} \right) \frac{v}{|v|}.
$$

\item We have
$$
\bar a_{ii}(v) 
= 2 \int_{\R^3} |v-v_*|^{\gamma+2} \mu(v_*)\, dv_*
\qquad\text{and}\qquad
\bar b_i(v) = - \ell_1(v)\, v_i.
$$

\end{enumerate}

\end{lem}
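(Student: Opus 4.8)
Everything is governed by the \emph{rotational covariance} of the kernel. Since $a_{ij}(z)=|z|^{\gamma+2}\Pi_{ij}(z)$ and $\Pi(z)$ is the orthogonal projection onto $z^{\perp}$, one checks directly that $a(Oz)=O\,a(z)\,O^{T}$ for every $O\in SO(3)$; combined with the invariance $\mu(Ov)=\mu(v)$ and the substitution $v_{*}\mapsto Ov_{*}$ this yields $\bar a(Ov)=O\,\bar a(v)\,O^{T}$. Taking $O$ to range over the rotations fixing $v$, the symmetric matrix $\bar a(v)$ then commutes with the whole stabiliser of $v$, hence acts as a scalar on $v^{\perp}$ and preserves $\R v$: this is exactly the spectral structure in (a), and it gives $\bar a(v)=\ell_{1}(v)P_{v}+\ell_{2}(v)(I-P_{v})$, from which both quadratic-form identities of (b), including $\bar a_{ij}(v)v_iv_j=\ell_1(v)|v|^2$, are immediate.

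To pin down $\ell_1$ and $\ell_2$ and put them in the stated integral form, I would evaluate the quadratic form of $\bar a(v)$ on $v$, using $a_{ij}(w)v_iv_j=|w|^{\gamma+2}|v|^{2}(1-(\hat w\cdot\hat v)^{2})$, which gives $\ell_1(v)=|v|^{-2}\bar a_{ij}(v)v_iv_j=\int(1-(\hat w\cdot\hat v)^2)|w|^{\gamma+2}\mu(v-w)\,dw$; then compute the trace, using $\operatorname{tr}a(w)=|w|^{\gamma+2}\operatorname{tr}\Pi(w)=2|w|^{\gamma+2}$, which yields the first identity of (c) and, since $\bar a_{ii}=\ell_1+2\ell_2$, the value $\ell_2=\tfrac12(\bar a_{ii}-\ell_1)$; the relation $(\hat v\cdot\hat w)^2=1-|\hat v\times\hat w|^2$ then rewrites $\ell_2$ as in (a). Positivity is clear: the integrand defining $\ell_1$ is nonnegative and positive off a null set, the one defining $\ell_2$ is $\ge\tfrac12$, and $\mu>0$. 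For the large-$|v|$ behaviour I would first simplify $\ell_1(v)=\int|v_*^{\perp}|^{2}|v-v_*|^{\gamma}\mu(v_*)\,dv_*$ (with $v_*^{\perp}$ the part of $v_*$ orthogonal to $v$), split the integral over $\{|v_*|\le|v|/2\}$ and $\{|v_*|>|v|/2\}$, and use dominated convergence on the first set and Gaussian smallness of $\mu$ on the second; since $\int|v_*^{\perp}|^{2}\mu=2$ this gives $\ell_1(v)\sim2|v|^{\gamma}\sim2\la v\ra^{\gamma}$, and the same splitting in $\bar a_{ii}(v)=2\int|v-v_*|^{\gamma+2}\mu(v_*)\,dv_*$ gives $\bar a_{ii}(v)\sim2|v|^{\gamma+2}$, hence $\ell_2(v)\sim|v|^{\gamma+2}\sim\la v\ra^{\gamma+2}$. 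For $\gamma\in(0,1]$ the functions $\ell_1,\ell_2$ are continuous (inherited from the smoothness of $\bar a$ established below) and blow up at infinity, so being everywhere positive they admit a positive lower bound $\ell_0$.

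The smoothness of $\bar a_{ij}$ and the identity $\partial^{\beta}\bar a_{ij}=a_{ij}*\partial^{\beta}\mu$ are routine since $a_{ij}\in L^{1}_{loc}$ grows polynomially and $\mu$ is Schwartz. The only genuinely delicate point is the gain of $|\beta|$ derivatives in $|\partial^{\beta}\bar a_{ij}(v)|\le C_{\beta}\la v\ra^{\gamma+2-|\beta|}$: one cannot simply move all $\beta$ derivatives onto $a_{ij}$, because $\partial^{\beta}a_{ij}\sim|z|^{\gamma+2-|\beta|}$ near the origin, which for $|\beta|$ large fails to be locally integrable. To get around this I would split off the singularity with the cut-off $\chi$, $a_{ij}=\chi a_{ij}+(1-\chi)a_{ij}$. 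The function $(1-\chi)a_{ij}$ is globally $C^{\infty}$, and since it coincides for large $|z|$ with $a_{ij}$, positively homogeneous of degree $\gamma+2$, it satisfies $|\partial^{\beta}[(1-\chi)a_{ij}](z)|\le C_{\beta}\la z\ra^{\gamma+2-|\beta|}$; this polynomial weight is preserved under convolution with $\mu$ via the standard estimate $\int\la v-v_*\ra^{r}\mu(v_*)\,dv_*\le C_{r}\la v\ra^{r}$ (splitting $|v_*|\le|v|/2$ versus $|v_*|>|v|/2$ when $r<0$). The function $\chi a_{ij}$ is compactly supported and in $L^{1}$, so $\partial^{\beta}[(\chi a_{ij})*\mu](v)=\int(\chi a_{ij})(z)\,\partial^{\beta}\mu(v-z)\,dz$ decays like $e^{-|v|^{2}/8}$, which is $o(\la v\ra^{\gamma+2-|\beta|})$; summing the two contributions gives the bound.

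Lastly, for the identity $\bar b_i(v)=-\ell_1(v)v_i$ in (c), I would use that convolution commutes with differentiation, $\bar b_i=b_i*\mu=(\partial_j a_{ij})*\mu=\partial_j\bar a_{ij}$, and then transfer the derivative onto the Gaussian: from $\bar a_{ij}(v)=\int a_{ij}(w)\mu(v-w)\,dw$ and $\partial_{v_j}[\mu(v-w)]=-(v_j-w_j)\mu(v-w)$, together with the algebraic identity $a_{ij}(w)w_j=0$ (that is, $a(w)w=0$), one gets $\bar b_i(v)=-v_j\int a_{ij}(w)\mu(v-w)\,dw=-(\bar a(v)v)_i=-\ell_1(v)v_i$. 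I expect the uniform-in-$\beta$ derivative estimate to be the only real obstacle — the cut-off being needed precisely because $|z|^{\gamma+2}$ is not smooth at the origin — with the rest amounting to a careful use of the $SO(3)$-symmetry and of dominated convergence.
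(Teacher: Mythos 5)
Your proposal is correct. Note that the paper does not prove this lemma at all: it is quoted from the references \cite{DL} (Propositions 2.3, 2.4, Corollary 2.5) and \cite{Guo} (Lemma 3), so there is no internal proof to compare against; your argument is a sound self-contained reconstruction along the same standard lines as those references, i.e.\ the $SO(3)$-covariance $\bar a(Ov)=O\bar a(v)O^{T}$ giving $\bar a(v)=\ell_1 P_v+\ell_2(I-P_v)$, identification of $\ell_1,\ell_2$ via the quadratic form on $v$ and the trace, the rewriting $\ell_1(v)=\int |v_*^\perp|^2|v-v_*|^\gamma\mu(v_*)\,dv_*$ with $\int|v_*^\perp|^2\mu=2$ for the asymptotics, the cut-off splitting $a_{ij}=\chi a_{ij}+(1-\chi)a_{ij}$ for the derivative bounds (correctly identifying that one cannot put all derivatives on $a_{ij}$ near the origin), and $a(w)w=0$ plus $\nabla\mu(x)=-x\mu(x)$ for $\bar b=-\bar a v=-\ell_1 v$. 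The only cosmetic points worth tightening are to invoke the radial symmetry of $\ell_1,\ell_2$ (or pass to a convergent subsequence of directions $\hat v$) when applying dominated convergence in the large-$|v|$ limit, and to treat $v=0$ separately (where $\bar a(0)$ is a multiple of the identity) when asserting continuity and the uniform lower bound for $\gamma\in(0,1]$; neither affects the validity of the argument.
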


\bigskip

Let us we define 
\beqn\label{eq:def-phi}
\varphi_{m,p}(v) := \frac1m \, \bar a : \nabla^2 m + \frac{(p-1)}{m^2}\, \bar a \nabla m \cdot \nabla m + \frac2m\, \bar b \cdot \nabla m + (1/p - 1) \bar c .
\eeqn

\medskip

Before proving the desired result in Lemma~\ref{lem:hypoBB}, we give the following elementary lemma to be used in the sequel.

\begin{lem}\label{lem:Jalpha}
Let $J_\alpha(v) := \int_{\R^3} |v-v_*|^\alpha \mu(v_*)\, dv_*$ for $-3 <  \alpha \le 2$. Then it holds:
\begin{enumerate}[(i)]

\item If $0 \le \alpha \le 2$ then $J_\alpha (v) \le |v|^{\alpha} + C_{\alpha}$ for some constant $C_\alpha >0$.

\item If $-3 < \alpha <0$ then $J_\alpha(v) \le C \la v \ra^\alpha $ for some constant $C>0$.

\end{enumerate}

\end{lem}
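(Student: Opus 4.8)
The plan is to estimate $J_\alpha(v)$ by splitting the integral over the regions where $v_*$ is close to $v$ and where it is far. I would write $J_\alpha(v) = \int_{|v-v_*| \le 1} + \int_{|v-v_*| > 1} |v-v_*|^\alpha \mu(v_*) \, dv_*$ and treat the two ranges of $\alpha$ separately.

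For part (i), with $0 \le \alpha \le 2$, the integrand is not singular, and the elementary inequality $|v-v_*|^\alpha \le (|v| + |v_*|)^\alpha \le 2^{\max(\alpha-1,0)}(|v|^\alpha + |v_*|^\alpha)$ (or simply convexity/subadditivity of $t \mapsto t^\alpha$ for $\alpha \le 1$ and the binomial-type bound for $1 \le \alpha \le 2$) reduces the estimate to $J_\alpha(v) \le C_\alpha' |v|^\alpha + C_\alpha' \int |v_*|^\alpha \mu(v_*) \, dv_*$. The Gaussian moment $\int |v_*|^\alpha \mu(v_*)\, dv_*$ is a finite constant, and by possibly enlarging the constant one absorbs the factor $C_\alpha'$ in front of $|v|^\alpha$; more precisely, since $|v|^\alpha \le |v|^\alpha$ trivially and the overhead constant only multiplies lower-order terms one can arrange the clean form $J_\alpha(v) \le |v|^\alpha + C_\alpha$ by using $(|v|+|v_*|)^\alpha \le |v|^\alpha + \alpha (|v|+|v_*|)^{\alpha-1}|v_*| \le |v|^\alpha + C(1+|v_*|)$ when $1 < \alpha \le 2$ via the mean value theorem, and subadditivity directly when $0 \le \alpha \le 1$.

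For part (ii), with $-3 < \alpha < 0$, the only issue is the (integrable, since $\alpha > -3$) singularity at $v_* = v$. On the region $|v-v_*| \le \la v \ra / 2$ one has $|v_*| \ge |v|/2$, so $\mu(v_*) \le \mu(v/2)$ decays faster than any power of $\la v \ra$, and $\int_{|v-v_*| \le \la v \ra/2} |v-v_*|^\alpha \, dv_* = C \la v \ra^{3+\alpha}$; hence this contribution is $\le C\la v \ra^{3+\alpha} e^{-c\la v \ra^2} \le C' \la v \ra^\alpha$. On the complementary region $|v-v_*| > \la v \ra / 2$ we bound $|v-v_*|^\alpha \le (\la v \ra/2)^\alpha = C \la v \ra^\alpha$ (using $\alpha < 0$), and then $\int \mu(v_*)\, dv_* \le 1$, giving again $\le C \la v \ra^\alpha$. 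Adding the two pieces yields $J_\alpha(v) \le C \la v \ra^\alpha$.

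I do not expect any serious obstacle: this is a standard convolution-with-Gaussian estimate, and the only mild care needed is to organize the region split so that the Gaussian kills the polynomial growth coming from the singular kernel in part (ii) and so that the constant in front of $|v|^\alpha$ in part (i) comes out exactly $1$ rather than a harmless larger constant. If one is content with $J_\alpha(v) \le C(|v|^\alpha + 1)$ in (i) the argument is completely routine; extracting the sharp leading constant is just a slightly more careful application of the mean value theorem as indicated above.
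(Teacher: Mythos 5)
Your part (ii) is correct, and it takes a genuinely different (equally standard) route from the paper's: the paper changes variables and splits at unit scale around the singularity, controlling the far region with Peetre's inequality, whereas you split at scale $\la v \ra/2$ and let the Gaussian kill the polynomial factor $\la v\ra^{3+\alpha}$ coming from the singular kernel. Both work; your claim ``$|v_*|\ge |v|/2$'' on the near region is only true up to a harmless constant (one has $|v_*|\ge (|v|-1)/2$, and small $|v|$ is trivial anyway). For part (i) the paper offers no proof, only a citation to \cite{KC4}, so a direct argument is welcome, and your subadditivity argument does settle the range $0\le\alpha\le 1$.

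However, for $1<\alpha\le 2$ your chain breaks at the step $\alpha(|v|+|v_*|)^{\alpha-1}|v_*|\le C(1+|v_*|)$: the left-hand side grows like $|v|^{\alpha-1}$ as $|v|\to\infty$, so no constant independent of $v$ exists, and after integration your argument only yields $J_\alpha(v)\le |v|^\alpha + C|v|^{\alpha-1}+C$, not the stated $|v|^\alpha+C_\alpha$. No refinement of the pointwise bound $|v-v_*|\le |v|+|v_*|$ can do better: the true reason the first-order term disappears is the symmetry of $\mu$, i.e.\ a cancellation after integration which the triangle inequality discards. The standard fix is to keep the square: write $|v-v_*|^\alpha=\bigl(|v|^2-2v\cdot v_*+|v_*|^2\bigr)^{\alpha/2}$ and use concavity of $t\mapsto t^{\alpha/2}$ (since $\alpha\le 2$) to get, for $v\ne 0$, $|v-v_*|^\alpha\le |v|^\alpha+\tfrac{\alpha}{2}|v|^{\alpha-2}\bigl(|v_*|^2-2v\cdot v_*\bigr)$; integrating against $\mu$, the cross term vanishes because $\int v_*\,\mu(v_*)\,dv_*=0$, so $J_\alpha(v)\le |v|^\alpha+\tfrac{3\alpha}{2}|v|^{\alpha-2}\le |v|^\alpha+C_\alpha$ for $|v|\ge 1$, the case $|v|\le 1$ being trivial. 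Note that the leading coefficient $1$ is not cosmetic: in Lemma~\ref{lem:phi} it is exactly what makes the term $2\kappa s J_{\gamma+2}(v)\la v\ra^{s-2}$ lose against $-2\kappa s\,\ell_1(v)|v|^2\la v\ra^{s-2}\sim -4\kappa s\la v\ra^{\gamma+s}$; your weaker bound would still suffice there, since $|v|^{\alpha-1}$ is of lower order, but the clean form you claim does not follow from the inequality you wrote.
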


\begin{proof}
Point $(i)$ can be found in \cite[Lemma 2.5]{KC4}. For point $(ii)$ we observe that the result easily follows if $|v| \le 1$. On the other hand if $|v| >1$ we write
$$
\bal
J_{\alpha}(v) &= \int_{|v_*|\le 1} |v_*|^{\alpha} \mu(v-v_*)\, dv_* + \int_{|v_*|\ge 1} |v_*|^{\alpha} \mu(v-v_*)\, dv_* \\
&\leq \sup_{|v_*|\le 1} \mu(v-v_*) \int_{|v_*| \le 1} |v_*|^{\alpha} \, dv_* 
+ C \int_{|v_*|\ge 1} \la v_* \ra^{\alpha} \mu(v-v_*)\, dv_*.
\eal
$$
Using that $\sup_{|v_*|\le 1} \mu(v-v_*) \le C e^{-|v|^2/4} \le C $ and that $\la v_* \ra^{\alpha} \le C \la v \ra^{\alpha} \la v - v_* \ra^{|\alpha|}$ by Peetre's inequality we conclude to
$$
\bal
J_{\alpha}(v) 
&\leq C
+ C \la v \ra^{\alpha} \int \la v-v_* \ra^{|\alpha|} \mu(v-v_*)\, dv_*
\le C \la v \ra^{\alpha}.
\eal
$$
\end{proof}

\begin{lem}\label{lem:phi}
Let $m$ satisfy assumption (W). Then for all $\lambda > 0$ we can choose $M$ and $R$ large enough such that, for all $v\in\R^3$,
$$
\varphi_{m,p}(v) - M \chi_R (v) \leq -\lambda.
$$
\end{lem}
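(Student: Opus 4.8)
The plan is to analyze the asymptotic behaviour of $\varphi_{m,p}(v)$ as $|v|\to\infty$ and show that the dominant term is strictly negative of order $\la v\ra^{s+\gamma-2+(\cdots)}$ (a negative power since $s<2$ and, crucially, $s+\gamma>0$), so that it tends to $-\infty$; then on the bounded region $|v|\le R_0$ where this asymptotics is not yet effective, I would absorb $\varphi_{m,p}$ by choosing the constant $M$ large enough in the term $-M\chi_R(v)$, using that $\chi_R\equiv 1$ on $|v|\le R$.

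Concretely, first I would compute $\nabla m$ and $\nabla^2 m$ for $m=\exp(\kappa\la v\ra^s)$. We have $\nabla m/m = \kappa s\la v\ra^{s-2}v$, and $\nabla^2 m/m = \kappa s\la v\ra^{s-2}I + \kappa s(s-2)\la v\ra^{s-4}v\otimes v + (\kappa s)^2\la v\ra^{2s-4}v\otimes v$. Substituting into \eqref{eq:def-phi} and using Lemma~\ref{lem:bar-aij}(b)--(c) to evaluate the contractions $\bar a:\nabla^2 m$, $\bar a\nabla m\cdot\nabla m$, $\bar b\cdot\nabla m$, the key observation is that $\bar a v\cdot v = \ell_1(v)|v|^2 \sim 2\la v\ra^{\gamma+2}$ while $\bar b\cdot v = -\ell_1(v)|v|^2$. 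Hence the two leading terms of order $\la v\ra^{2s-4}\cdot\la v\ra^{\gamma+2}\cdot|v|^2 \sim \la v\ra^{2s+\gamma}$ coming from $(p-1)m^{-2}\bar a\nabla m\cdot\nabla m \sim (p-1)(\kappa s)^2\la v\ra^{2s+\gamma}$ and from the part of $m^{-1}\bar a:\nabla^2 m$ involving $(\kappa s)^2\la v\ra^{2s-4}\bar a v\cdot v$ are both positive — but they are then cancelled at the next order by $2m^{-1}\bar b\cdot\nabla m = 2\kappa s\la v\ra^{s-2}\bar b\cdot v \sim -2\cdot 2(\kappa s)\la v\ra^{s+\gamma}$... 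I need to be careful here. The genuine cancellation is the one familiar from hypocoercivity estimates for Fokker--Planck-type operators: the $\la v\ra^{2s+\gamma}$ terms must combine so that what survives is lower order. In fact writing everything out, the $m^{-1}\bar a:\nabla^2 m$ term contributes $(\kappa s)^2\la v\ra^{2s-4}\ell_1|v|^2 + O(\la v\ra^{s+\gamma})$ and the $2m^{-1}\bar b\cdot\nabla m$ term contributes $2\kappa s\la v\ra^{s-2}(-\ell_1|v|^2)$; combined with the $(p-1)m^{-2}\bar a\nabla m\cdot\nabla m = (p-1)(\kappa s)^2\la v\ra^{2s-4}\ell_1|v|^2$ term, the three $\la v\ra^{2s+\gamma}$-order pieces do NOT cancel unless $p=1$ and one uses that $2s-4<s-2$, i.e.\ $s<2$; thus for $s<2$ the $2m^{-1}\bar b\cdot\nabla m\sim -4\kappa s\la v\ra^{s+\gamma}$ term dominates, and since $s+\gamma>0$ this is precisely the negative, divergent contribution that forces $\varphi_{m,p}(v)\to-\infty$. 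I would make this rigorous by bounding $\ell_1(v)\le 2\la v\ra^\gamma + C$ (from Lemma~\ref{lem:bar-aij}(a) or via $J_{\gamma+2}$ and $J_\gamma$ in Lemma~\ref{lem:Jalpha}), bounding $|\bar c(v)| = 2(\gamma+3)J_\gamma(v)\le C\la v\ra^\gamma$ (Lemma~\ref{lem:Jalpha}(ii) since $\gamma\in(-2,0)$), and bounding $\ell_2$-contributions likewise using Lemma~\ref{lem:bar-aij}(b), so that we get an explicit estimate of the form $\varphi_{m,p}(v)\le -c\,\kappa s\la v\ra^{s+\gamma} + C\la v\ra^{\max(2s+\gamma-2,\,\gamma)}$ for some $c>0$, and since $s+\gamma > 2s+\gamma-2 \iff s<2$ and $s+\gamma>\gamma$, the first term wins for large $|v|$.

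Having established $\limsup_{|v|\to\infty}\varphi_{m,p}(v) = -\infty$, the rest is immediate: given $\lambda>0$, pick $R_0$ so that $\varphi_{m,p}(v)\le-\lambda$ for $|v|\ge R_0$; on $|v|\le R_0$ the function $\varphi_{m,p}$ is continuous hence bounded, say $|\varphi_{m,p}(v)|\le K$, so choosing $R\ge R_0$ and $M\ge K+\lambda$ gives $\varphi_{m,p}(v) - M\chi_R(v)\le K - M \le -\lambda$ on $|v|\le R_0$, while on $|v|\ge R_0$ we have $\varphi_{m,p}(v)-M\chi_R(v)\le\varphi_{m,p}(v)\le-\lambda$ since $\chi_R\ge 0$. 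This yields the claim for all $v\in\R^3$.

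The main obstacle I anticipate is the bookkeeping of the asymptotic expansion: one must track several terms of orders $\la v\ra^{2s+\gamma}$, $\la v\ra^{2s+\gamma-2}$, $\la v\ra^{s+\gamma}$, and $\la v\ra^\gamma$, and verify that the would-be leading positive term of order $\la v\ra^{2s+\gamma}$ is in fact dominated by the negative term of order $\la v\ra^{s+\gamma}$ precisely because $s<2$ — equivalently, that the exponent $s$ in the stretched exponential is subquadratic so that the drift term $\bar b\cdot\nabla m$ beats the diffusion-curvature terms. Getting the signs and the interplay between $p\in[1,2]$ (the $(p-1)\ge 0$ and $(1/p-1)\le 0$ factors) exactly right, and confirming that the constraint $s+\gamma>0$ from (W) is what guarantees divergence rather than mere boundedness, is the delicate part; everything else is routine continuity-and-compactness on the bounded region.
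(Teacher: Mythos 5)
Your overall scheme is the paper's: compute $\nabla m/m$ and $\nabla^2 m/m$, contract with $\bar a,\bar b,\bar c$ using Lemma~\ref{lem:bar-aij}, show $\varphi_{m,p}(v)\to-\infty$ as $|v|\to\infty$, then pick $R$ and finally $M\ge \sup_{|v|\le R}\varphi_{m,p}+\lambda$; the concluding compactness step is fine. The gap is in the asymptotic analysis, which is the whole content of the lemma. Writing $\varphi_{m,p}$ out as in \eqref{eq:phi0}, the terms are $2\kappa s J_{\gamma+2}(v)\la v\ra^{s-2}$, $\kappa s(s-2)\ell_1(v)|v|^2\la v\ra^{s-4}$, $p\kappa^2 s^2\ell_1(v)|v|^2\la v\ra^{2s-4}$, $-2\kappa s\,\ell_1(v)|v|^2\la v\ra^{s-2}$ and $2(\gamma+3)(1-1/p)J_\gamma(v)$. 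The diffusion/curvature pieces you focus on are of order $\la v\ra^{2s+\gamma-2}$, not $\la v\ra^{2s+\gamma}$ as you state mid-argument (you count the factor $|v|^2$ twice: $\bar a v\cdot v=\ell_1|v|^2\sim 2\la v\ra^{\gamma+2}$ already contains it; with your stated order the lemma would actually be false, since $2s+\gamma>s+\gamma$). These pieces are indeed beaten by the drift because $s<2$, as in your final display. But the decisive competitor is one you bury in an ``$O(\la v\ra^{s+\gamma})$'': the trace part of $m^{-1}\bar a:\nabla^2 m$, namely $\kappa s\la v\ra^{s-2}\bar a_{ii}=2\kappa s J_{\gamma+2}(v)\la v\ra^{s-2}\sim 2\kappa s\la v\ra^{s+\gamma}$, which is positive and of \emph{exactly the same order} $\la v\ra^{s+\gamma}$ as the drift term. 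No ``$s<2$'' argument dominates it; asserting $\varphi_{m,p}\le -c\kappa s\la v\ra^{s+\gamma}+C\la v\ra^{\max(2s+\gamma-2,\gamma)}$ with $c>0$ at that point is asserting the very thing to be proved.

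What closes the gap is a coefficient comparison at order $\la v\ra^{s+\gamma}$: by Lemma~\ref{lem:Jalpha}(i), $J_{\gamma+2}(v)\le |v|^{\gamma+2}+C$, while by Lemma~\ref{lem:bar-aij}(a), $\ell_1(v)|v|^2\sim 2\la v\ra^{\gamma+2}$, so the two same-order terms combine into $(2\kappa s-4\kappa s)\la v\ra^{s+\gamma}+o(\la v\ra^{s+\gamma})=-2\kappa s\la v\ra^{s+\gamma}+o(\la v\ra^{s+\gamma})$, and only then does $s+\gamma>0$ give $\varphi_{m,p}\to-\infty$. Note also the direction of the bounds: for the negative drift term $-2\kappa s\,\ell_1|v|^2\la v\ra^{s-2}$ you need the lower (asymptotic) bound $\ell_1\sim 2\la v\ra^{\gamma}$, whereas the upper bound $\ell_1\le 2\la v\ra^\gamma+C$ that you propose only makes that term \emph{less} negative (it is, however, what you want for the positive $\la v\ra^{2s+\gamma-2}$ pieces). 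With this factor-$2$ computation inserted, the rest of your argument (choice of $R$, then of $M$, using $\chi_R\equiv 1$ on $|v|\le R$ and $\chi_R\ge0$ elsewhere) coincides with the paper's proof.
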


\begin{proof}
Let $m = \exp(\kappa\la v \ra^s)$. We easily compute
$$
\frac{\nabla m}{m} = \kappa s v \la v \ra^{s-2} 
$$
and
$$
\frac{ (\nabla^2 m)_{ij} }{m} = \kappa s \la v \ra^{s-2} \delta_{ij} + \kappa s(s-2) v_i v_j \la v \ra^{s-4} + \kappa^2 s^2 v_i v_j \la v \ra^{2s-4}.
$$
It follows then
$$
\bal
 \bar a : \frac{\nabla^2 m }{m}
&= (\delta_{ij} \bar a_{ij}) \kappa s \la v \ra^{s-2} + (\bar a_{ij} v_i v_j) \kappa s(s-2) \la v \ra^{s-4}
+ (\bar a_{ij} v_i v_j) \kappa^2 s^2 \la v \ra^{2s-4} \\
&= 2 \kappa s J_{\gamma+2}(v) \la v \ra^{s-2} + \kappa s(s-2) \ell_1(v) |v|^2 \la v\ra^{s-4}
+ \kappa^2 s^2 \ell_1(v) |v|^2 \la v \ra^{2s-4},
\eal
$$
where we have used Lemma~\ref{lem:bar-aij}. Moreover, using again Lemma~\ref{lem:bar-aij}, we obtain
$$
 \bar a \, \frac{\nabla m}{m} \, \frac{\nabla m}{m} = \bar a v v \kappa^2 s^2 \la v \ra^{2s-4} = \kappa^2 s^2 \ell_1(v) |v|^2 \la v\ra^{2s-4}
$$
and
$$
 \bar b \cdot \frac{\nabla m}{m} = - \kappa s \ell_1(v) |v|^2 \la v \ra^{s-2}.
$$
Putting together the above estimates, we obtain
\beqn\label{eq:phi0}
\bal
\varphi_{m,p}(v) 
&= 2 \kappa s J_{\gamma+2}(v) \la v \ra^{s-2} 
+ \kappa s(s-2) \ell_1(v) |v|^2 \la v\ra^{s-4}
+ p \kappa^2 s^2 \ell_1(v) |v|^2 \la v \ra^{2s-4} \\
&\quad
- 2 \kappa s \ell_1(v) |v|^2 \la v \ra^{s-2}
+ 2(\gamma+3)(1-1/p) J_{\gamma}(v).
\eal
\eeqn
From the asymptotic behaviour of $\ell_1$, $J_{\gamma+2}$ and $J_\gamma$, the dominant terms of $\varphi_{m,p}$ in \eqref{eq:phi0} when $|v|\to\infty$ are the first and the fourth one, both of order $\la v \ra^{\gamma+s}$. Using Lemma~\ref{lem:Jalpha} to bound $J_{\gamma+2}(v) \leq \tilde J_{\gamma+2}(v) \underset{|v|\to\infty}{\sim} \la v \ra^{\gamma+2}$ and $\ell_1(v) \underset{|v|\to\infty}{\sim} 2 \la v \ra^{\gamma}$ from Lemma~\ref{lem:bar-aij}, we obtain that
$\varphi_{m,p}(v) \leq \tilde \varphi_{m,p}(v)$ with
\beqn\label{eq:phi-exp}
\tilde \varphi_{m,p}(v) \underset{|v|\to\infty}{\sim} -2 \kappa s \la v \ra^{s+\gamma} \xrightarrow[|v|\to\infty]{} -\infty,
\eeqn
because $s+\gamma >0$ from assumption (W).

Let us fix $\lambda >0$. Then, thanks to \eqref{eq:phi-exp}, we can choose $R$ large enough such that
$$
\forall\, |v| > R, \quad \varphi_{m,p}(v) - M \chi_R(v) \le - \lambda.
$$
Finally, we choose $M \ge \sup_{|v|\le R} \varphi_{m,p}(v) + \lambda$ so that
$$
\forall\, |v| \le R, \quad \varphi_{m,p}(v) - M \chi_R(v) = \varphi_{m,p}(v) - M \le - \lambda,
$$
from which we conclude.
\end{proof}

With the help of the result above, we are able to state a result on the dissipativity of $\BB$. Recall that
$$
\BB = \BB_0 - M\chi_R,
\qquad
\BB_0 f = \nabla\cdot \{ \bar a \nabla f - \bar b f  \}.
$$

\begin{lem}\label{lem:hypoBB}
Let $\gamma\in(-2,0)$, $p\in[1,+\infty)$ and $m$ be a weight function satisfying assumption (W). Then for any $ \lambda > 0 $, we can choose $M$ and $R$ large enough such that the operator $(\BB + \lambda  )$ is dissipative in $L^p(m)$.
\end{lem}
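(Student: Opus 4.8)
The plan is to compute the action of $\BB$ on the function $f$ in $L^p(m)$ via the standard device of multiplying by $m^p |f|^{p-2} f$ and integrating, i.e. estimating $\frac{d}{dt}\|f\|_{L^p(m)}^p = p\int (\BB f)\, m^p |f|^{p-2} f$. Writing $g = mf$, one rewrites everything in terms of $g$ and the drift-diffusion structure $\BB_0 f = \nabla\cdot\{\bar a \nabla f - \bar b f\}$. Integrating by parts, the second-order diffusion term produces a manifestly nonpositive contribution of the form $-(p-1)\int \bar a \nabla(|f|^{p/2} \cdot\text{stuff})\cdots$ plus lower-order terms, and the remaining first-order and zeroth-order terms assemble — after carefully tracking the weight derivatives — into exactly $\int \varphi_{m,p}(v)\, |g|^p\, dv$ up to the good negative diffusion term. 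This is precisely why $\varphi_{m,p}$ was defined in \eqref{eq:def-phi}: it is the effective zeroth-order coefficient seen by the $L^p(m)$ norm.

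First I would carry out the integration by parts for $\BB_0$ acting against $m^p|f|^{p-2}f$. The key algebraic identity is that
$$
\int (\nabla\cdot\{\bar a\nabla f - \bar b f\})\, m^p |f|^{p-2} f\, dv
= -(p-1)\int \bar a\nabla f\cdot\nabla f\, |f|^{p-2} m^p
+ \int \left(\text{terms with } \nabla m, \bar b, \bar c\right),
$$
and after substituting $f = g/m$ and expanding $\nabla f = \nabla g/m - g\nabla m/m^2$, the quadratic-in-$\nabla g$ part remains $-(p-1)\int \bar a\nabla g\cdot\nabla g\,|g|^{p-2}\,m^{-2}\cdot(\cdots) \le 0$ since $\bar a\ge 0$ by Lemma~\ref{lem:bar-aij}, while the cross terms and the $\bar b f$, $\bar c f$ contributions collect into $\int \varphi_{m,p}(v)|g|^p\,dv$. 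Incorporating the $-M\chi_R$ term simply adds $-M\int\chi_R|g|^p$. Thus
$$
\int (\BB f)\, m^p|f|^{p-2}f\, dv \le \int \big(\varphi_{m,p}(v) - M\chi_R(v)\big)\, |g|^p\, dv.
$$
Then I would invoke Lemma~\ref{lem:phi}: given $\lambda>0$, choose $M,R$ so that $\varphi_{m,p}(v) - M\chi_R(v) \le -\lambda$ pointwise, which immediately yields $\int(\BB f)m^p|f|^{p-2}f\,dv \le -\lambda\|f\|_{L^p(m)}^p$, i.e. $(\BB+\lambda)$ is dissipative in $L^p(m)$.

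The main obstacle — and the step requiring the most care rather than cleverness — is the bookkeeping in the integration by parts: correctly producing the coefficients $(p-1)$ and $(1/p-1)$ in front of the $\bar a\nabla m\cdot\nabla m$ and $\bar c$ terms, and verifying that all the genuinely quadratic-in-gradient pieces are collected into a single nonpositive expression (one must check that no sign-indefinite gradient cross-term survives, which works because the diffusion matrix is symmetric). One subtlety is that for $p\ne 2$ the function $|f|^{p-2}f$ is not smooth at zeros of $f$, so strictly speaking one should regularise $|f|$ by $(|f|^2+\delta)^{1/2}$, perform the computation, and pass to the limit $\delta\to 0$; since this is entirely standard I would only remark on it. A second minor point: one needs the weight derivatives $\nabla m/m$ and $\nabla^2 m/m$ to have at most polynomial growth (true for $s<2$, hence $\bar a$'s growth $\la v\ra^{\gamma+2}$ times these stays controlled), and the boundary terms at infinity vanish because $g = mf \in L^p$ and the densities decay — this is where assumption (W) and the $L^p(m)$ membership are used. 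Once the pointwise bound from Lemma~\ref{lem:phi} is in hand, the conclusion is immediate.
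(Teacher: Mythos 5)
Your proposal is correct and follows essentially the same route as the paper: multiply $\BB f$ by $m^p|f|^{p-2}f$, integrate by parts to isolate the nonpositive diffusion term $-(p-1)\int \bar a\,\nabla f\cdot\nabla f\,|f|^{p-2}m^p$ and the effective potential $\varphi_{m,p}$ from \eqref{eq:def-phi} (this is exactly the identity \eqref{eq:Lpm}), then apply Lemma~\ref{lem:phi} to choose $M,R$ with $\varphi_{m,p}-M\chi_R\le-\lambda$. The only cosmetic difference is your substitution $g=mf$ and the (correct) remarks on regularising $|f|^{p-2}f$ and on boundary terms, which the paper leaves implicit.
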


\begin{lem}\label{lem:hypoBBmu}
Let $\gamma\in(-2,0)$. Then for any $ \lambda > 0 $, we can choose $M$ and $R$ large enough such that the operator $(\BB + \lambda  )$ is dissipative in $L^2(\mu^{-1/2})$.
\end{lem}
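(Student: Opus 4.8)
The plan is to reduce Lemma~\ref{lem:hypoBBmu} to the same computation that underlies Lemma~\ref{lem:phi} and Lemma~\ref{lem:hypoBB}, by observing that the Gaussian weight $\mu^{-1/2}$ is itself (up to the polynomial prefactor and a constant) a stretched exponential weight with exponent $s=2$, a case excluded from assumption~(W) only because there $s+\gamma$ need not be positive. Concretely, I would write $\mu^{-1/2}(v) = (2\pi)^{3/4} e^{|v|^2/4}$ and set $m = \mu^{-1/2}$, $p=2$. Testing $\BB f$ against $f\, m^2 = f\, \mu^{-1}$ and integrating by parts in the same way as for the $L^p(m)$ estimate, one arrives at
\begin{equation}\label{eq:BBmu-dissip}
\la \BB f, f \ra_{L^2(\mu^{-1/2})} = -\int \bar a \nabla\!\left(\frac{f}{m}\right)\cdot\nabla\!\left(\frac{f}{m}\right) m^2 \, dv + \int \varphi_{m,2}(v)\, |f|^2\, m^2 \, dv - M\int \chi_R\, |f|^2\, m^2\, dv,
\end{equation}
where $\varphi_{m,2}$ is exactly the function defined in \eqref{eq:def-phi}. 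The first term on the right is $\le 0$ since $\bar a \ge 0$ by Lemma~\ref{lem:bar-aij}, so it suffices to show $\varphi_{m,2}(v) - M\chi_R(v) \le -\lambda$ pointwise for suitable $M,R$, which is the analogue of Lemma~\ref{lem:phi}.

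The key step is therefore to redo the pointwise computation of $\varphi_{m,p}$ for $m = e^{|v|^2/4}$, i.e.\ $\kappa = 1/4$ and $s = 2$. Here the simplification is significant: $\nabla m/m = v/2$ and $\nabla^2 m/m = \tfrac12 I + \tfrac14 v\otimes v$, with no negative powers of $\la v \ra$ appearing. Plugging into \eqref{eq:def-phi} and using Lemma~\ref{lem:bar-aij} to write $\bar a : I = \bar a_{ii} = 2J_{\gamma+2}(v)$, $\bar a vv = \ell_1(v)|v|^2$ and $\bar b\cdot v = -\ell_1(v)|v|^2$, one gets something of the schematic form
\begin{equation}\label{eq:phi-mu}
\varphi_{m,2}(v) = J_{\gamma+2}(v) + \tfrac14\big(p\,\tfrac14 + \tfrac12\big)\ell_1(v)|v|^2 \cdot(\text{coeff.}) - \tfrac12\ell_1(v)|v|^2 + (\gamma+3)J_\gamma(v),
\end{equation}
and the point is that the $\ell_1(v)|v|^2$ terms carry a net \emph{negative} coefficient after collecting: the "drift" term $-\tfrac12\ell_1|v|^2$ (of order $\la v\ra^{\gamma+2}$) dominates the "diffusion along $v$" term, which with $\kappa=1/4$, $s=2$, $p=2$ has coefficient $\tfrac14\cdot(2\cdot\tfrac14 - 1) = -\tfrac18 < 0$ in front of $\ell_1|v|^2\la v\ra^0$ — both contributions are negative — while $J_{\gamma+2}(v) \sim \la v\ra^{\gamma+2}$ is positive but of the \emph{same} order $\la v\ra^{\gamma+2}$. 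So I must check that the leading $\la v\ra^{\gamma+2}$ coefficient is strictly negative: since $\bar a_{ii} = 2J_{\gamma+2} \sim 2\la v\ra^{\gamma+2}$ while $\ell_1|v|^2 \sim 2\la v\ra^{\gamma+2}$, the coefficient of $\la v\ra^{\gamma+2}$ is $\tfrac14\cdot 2 - \tfrac12\cdot 2 = -\tfrac12 < 0$ (plus lower-order contributions from $J_\gamma$, which is $O(\la v\ra^\gamma)$), hence $\varphi_{m,2}(v) \to -\infty$ as $|v|\to\infty$. Note that here $\gamma + 2 > 0$ automatically for $\gamma\in(-2,0)$, so no extra hypothesis beyond $\gamma\in(-2,0)$ is needed — this is why (W) is not required. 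Once $\varphi_{m,2}(v)\to-\infty$ is established, the choice of $R$ (to handle $|v|>R$) and then $M \ge \sup_{|v|\le R}\varphi_{m,2} + \lambda$ (to handle $|v|\le R$) is carried out verbatim as in the proof of Lemma~\ref{lem:phi}.

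With the pointwise bound in hand, dissipativity of $\BB + \lambda$ in $L^2(\mu^{-1/2})$ follows immediately from \eqref{eq:BBmu-dissip}: $\la(\BB+\lambda)f,f\ra_{L^2(\mu^{-1/2})} \le \int(\varphi_{m,2} - M\chi_R + \lambda)|f|^2\mu^{-1} \le 0$. The main obstacle I anticipate is purely bookkeeping: justifying the integrations by parts in \eqref{eq:BBmu-dissip} (the weight $\mu^{-1}$ grows, so one works on a dense subspace of sufficiently decaying $f$ and passes to the limit, exactly as in the $L^p(m)$ case), and correctly tracking the numerical coefficients of the several $\ell_1(v)|v|^2$-type terms so that the claimed cancellation/sign is verified — there is genuine cancellation rather than a term-by-term sign, so one cannot be cavalier. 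Everything else is a direct transcription of the $L^p(m)$ argument with $s$ formally set to $2$.
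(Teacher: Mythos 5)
Your reduction to the pointwise function $\varphi_{\mu^{-1/2},2}$ is the paper's first step too, but the sign claim at the heart of your argument is wrong, and the error is exactly the obstruction the paper warns about. With $m=\mu^{-1/2}$, $p=2$ one has $\nabla^2 m/m=\tfrac12 I+\tfrac14\, v\otimes v$, so the trace part of $\tfrac1m\,\bar a:\nabla^2m$ contributes $\tfrac12\,\bar a_{ii}=J_{\gamma+2}(v)\sim\la v\ra^{\gamma+2}$ with coefficient $1$, while the three $\ell_1(v)|v|^2$ contributions sum to $\bigl(\tfrac14+\tfrac14-1\bigr)\ell_1(v)|v|^2=-\tfrac12\ell_1(v)|v|^2\sim-\la v\ra^{\gamma+2}$ (since $\ell_1\sim2\la v\ra^\gamma$). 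Hence $\varphi_{\mu^{-1/2},2}=J_{\gamma+2}-\tfrac12\ell_1|v|^2+(\gamma+3)J_\gamma$ and the $\la v\ra^{\gamma+2}$ terms cancel \emph{exactly}; your count ``$\tfrac14\cdot2-\tfrac12\cdot2=-\tfrac12$'' assigns weight $\tfrac14$ instead of $\tfrac12$ to $\bar a_{ii}$. What is left is of strictly lower order (for $\gamma\le-1$ it is even bounded), so you cannot conclude that $\varphi_{\mu^{-1/2},2}\to-\infty$, and the choice of $M,R$ for an \emph{arbitrary} $\lambda>0$ breaks down. This failure is structural, not just computational: since $\bar b=\bar a\,\nabla\log\mu$ (as $\bar a v=\ell_1 v$ and $\bar b=-\ell_1 v$), the quadratic form of $\BB_0$ in $L^2(\mu^{-1/2})$ is exactly the nonpositive Dirichlet form $-\int\bar a\,\nabla(h\mu^{-1})\cdot\nabla(h\mu^{-1})\,\mu$, with no negative potential at all; so any argument that simply discards the gradient term, as your proposal does, cannot produce a bound $\le-\lambda\|h\|^2_{L^2(\mu^{-1/2})}$ outside the support of $\chi_R$. (Incidentally, your first display with $\nabla(f/m)$ is not the identity \eqref{eq:Lpm} either; the correct potential accompanying $\varphi_{m,2}$ goes with $-\int\bar a\,\nabla f\cdot\nabla f\,m^2$.)

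The missing idea, which is what the paper's proof supplies, is to \emph{keep} the gradient term and extract far-field coercivity from it: using $\bar a_{ij}\xi_i\xi_j\ge K\la v\ra^\gamma|\xi|^2$ (Lemma~\ref{lem:bar-aij}) and one more integration by parts, one writes
\begin{equation*}
-K\int\la v\ra^{\gamma}|\nabla h|^2\mu^{-1}
=-K\int\bigl|\nabla\bigl(\la v\ra^{\gamma/2}\mu^{-1/2}h\bigr)\bigr|^2
-\int\Bigl\{\tfrac{K}{4}\la v\ra^{\gamma+2}+O(\la v\ra^{\gamma})\Bigr\}h^2\mu^{-1},
\end{equation*}
so the effective potential becomes $\tilde\varphi_\mu=\varphi_\mu-\tfrac{K}{4}\la v\ra^{\gamma+2}+C\la v\ra^\gamma$, which does tend to $-\infty$ like $-\tfrac{K}{4}\la v\ra^{\gamma+2}$ because the $\la v\ra^{\gamma+2}$ coefficient is now genuinely negative; only then does the choice of $R$ and $M$ proceed verbatim as in Lemma~\ref{lem:phi}. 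Without this extra step your scheme cannot be repaired by bookkeeping alone.
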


\begin{proof}[Proof of Lemma \ref{lem:hypoBB}]
We denote $\Phi'(x) = |x|^{p-1}\mathrm{sign}(x)$ and consider the equation
$$
\partial_t h =\BB h= \BB_0 h - M \chi_R h.
$$
For all $p\in[1,\infty)$, we compute
$$
\frac1p \frac{d}{d t} {\| h \|}_{L^p(m)}^p = \int (\BB_0 h) \Phi'(h) m^p - \int (M\chi_R) |h|^p m^p.
$$
For the first term, we perform integration by parts to obtain
$$
\bal
\int (\BB_0 h) \Phi'(h) m^p 
&= \int \nabla\cdot \{ \bar a \nabla h - \bar b h  \} \Phi'(h) m^p \\
&= - \int \bar a \nabla h \nabla(\Phi'(h)) m^p 
- \int \bar a \nabla h \Phi'(h) \nabla(m^p) \\
&\quad
+\int \bar b h \nabla(\Phi'(h)) m^p  
+ \int \bar b h \Phi'(h) \nabla(m^p).
\eal
$$
Using that $\nabla(\Phi'(h)) = (p-1)|h|^{p-2} \nabla h $, $\Phi'(h)\nabla h = p^{-1} \nabla(|h|^p)$ and $h \nabla(\Phi'(h)) = (1-1/p) \nabla(|h|^p)$, and integrating by parts, we finally get
$$
\bal
\int (\BB_0 h) \Phi'(h) m^p 
&= -(p-1) \int \bar a \nabla h \nabla h |h|^{p-2} m^p \\
&\quad
+ \frac1p \int \left\{  \bar a : \frac{\nabla^2(m^p)}{m^p} + 2\, \bar b \cdot \frac{\nabla(m^p)}{m^p} -(p-1)  \bar c     \right\}     |h|^p m^p .
\eal
$$
We can rewrite
$$
\nabla(m^p) = p m^{p-1} \nabla m
$$
and
$$
\nabla^2 (m^p) = (\partial_{ij} m^p)_{1\le i,j\le 3} = p(p-1) m^{p-2} \partial_i m \partial_j m + p m^{p-1} \partial_{ij}m
$$
to obtain
\beqn\label{eq:Lpm}
\frac1p \frac{d}{d t} {\| h \|}_{L^p(m)}^p 
= -(p-1) \int \bar a \nabla h \nabla h |h|^{p-2} m^p
+ \int (\varphi_{m,p} - M\chi_R ) |h|^p m^p,
\eeqn
where $\varphi_{m,p}$ is defined in \eqref{eq:def-phi}.

From Lemma~\ref{lem:phi}, for all $\lambda \geq 0$, we can choose $M$ and $R$ large enough such that $\varphi_{m,p}(v) - M\chi_R (v) \leq -\lambda$. 
Hence, it follows that the operator $(\BB + \lambda )$ is dissipative in $L^p(m)$. Indeed, from \eqref{eq:Lpm} we have
$$
\bal
\frac1p \frac{d}{d t} {\| h \|}_{L^p(m)}^p 
&= -(p-1) \int \bar a\, \nabla h \nabla h\, |h|^{p-2} m^p + \int (\varphi_{m,p} - M\chi_R) |h|^p m^p \\
&\leq -\lambda {\| h \|}_{L^p(m)}^p,
\eal
$$
since the matrix $\bar a$ is positive, and it follows that
\beqn\label{eq:SBexp}
\| \SS_\BB(t) h \|_{L^p(m)} \leq e^{-\lambda t} \| h \|_{L^p(m)}.
\eeqn
\end{proof}

\begin{proof}[Proof of Lemma \ref{lem:hypoBBmu}]
Arguing as in the proof above and denoting $\varphi_{\mu} := \varphi_{\mu^{-1/2},2}$,
that satisfies from \eqref{eq:phi0}
$$
\varphi_\mu(v) = J_{\gamma+2}(v) - \frac12 \ell_1(v) |v|^2 + (\gamma+3) J_\gamma(v),
$$
we obtain
$$
\frac12\frac{d}{dt}\| h \|_{L^2(\mu^{-1/2})}^2 = - \int \bar a \nabla h \nabla h \, \mu^{-1}  + \int(\varphi_{\mu} - M\chi_R)  h^2 \mu^{-1}.
$$
Remark that here we can not conclude as in the proof of Lemma~\ref{lem:hypoBB} because the coefficient of order $\la v \ra^{\gamma+2}$ in $\varphi_\mu$ vanishes in the asymptotic $|v| \to \infty$.

From Lemma~\ref{lem:bar-aij}, there exists $K >0$ such that  $\bar a_{ij} \xi_i \xi_j \geq K \la v \ra^{\gamma} |\xi|^2$. We obtain then
$$
\bal
\frac12\frac{d}{dt}\| h \|_{L^2(\mu^{-1/2})}^2
&\leq - K \int \la v \ra^{\gamma}| \nabla h|^2 \, \mu^{-1}   + \int(\varphi_\mu - M\chi_R)  h^2 \mu^{-1}
\eal 
$$
and, by integration by parts, we also have  
$$
\bal
&\int | \nabla ( \la v \ra^{\gamma/2} \mu^{-1/2} h)|^2 \\
&\qquad
= \int    \la v \ra^{\gamma} \mu^{-1} |\nabla h|^2 + \frac{\gamma^2}{4} |v|^2 \la v \ra^{\gamma-4}\mu^{-1} h^2  + \frac14 |v|^2 \la v\ra^{\gamma} h^2  \mu^{-1} \\ 
&\qquad \quad
+ 2\int \frac{\gamma}{2} h \nabla h v \la v \ra^{\gamma-2} \mu^{-1} 
+ \frac12 h \nabla h v \la v \ra^{\gamma} \mu^{-1}
+ \frac{\gamma}{4} |v|^2 \la v \ra^{\gamma-2} h^2 \mu^{-1}  \\
&\qquad
= \int \la v \ra^{\gamma}  |\nabla h|^2 \mu^{-1}
+ \int \left\{ - \frac14 \la v \ra^{\gamma+2} - \left(\frac54 + \frac{\gamma}{2}\right) \la v \ra^{\gamma} - \frac{\gamma^2}{4} \la v \ra^{\gamma-2} - \frac{\gamma}{4}(4-\gamma) \la v \ra^{\gamma-4}    \right\} h^2 \mu^{-1} .
\eal
$$ 
Finally, it follows that
\beqn\label{eq:reg}
\bal
\frac12\frac{d}{dt}\| h \|_{L^2( \mu^{-1/2} )}^2
&\leq - K \int | \nabla (\la v \ra^{\gamma} \mu^{-1/2} h)|^2   + \int(\tilde \varphi_\mu - M\chi_R)  h^2 \mu^{-1} ,
\eal 
\eeqn
where
$$
\bal
\tilde \varphi_\mu (v) 
&=  \varphi_\mu(v) - \frac14 \la v \ra^{\gamma+2} + C \la v \ra^{\gamma}\\
&= - \frac14 \la v \ra^{\gamma+2} + J_{\gamma+2}(v) - \frac12 \ell_1(v) |v|^2 + (\gamma+3) J_\gamma(v)  + C \la v\ra^{\gamma}.
\eal
$$
Thanks to the asymptotic behaviour of $\ell_1$, $J_{\gamma+2}$ and $J_\gamma$, and arguing as in Lemma~\ref{lem:phi}, we easily get that
$$
\varphi_\mu(v)  \underset{|v|\to\infty}{\sim} -\frac14 \la v \ra^{\gamma+2} \xrightarrow[|v|\to\infty]{} -\infty.
$$
Then, for any $\lambda >0$, we can choose $M,R$ large enough such that $\varphi(v) - M \chi_R(v) \le - \lambda$ for any $v \in \R^3$. We conclude the proof as in the previous lemma.
\end{proof}

\subsection{Regularisation properties}\label{ssec:regularizing}

We are now interested in regularisation properties of the operator $\AA$ and the iterated convolutions of $\AA\SS_\BB$.
Let us recall the operator $\AA$ defined in \eqref{eq:AB},
$$
\AA g = \AA_0 g + M\chi_R g = (a_{ij}\ast g)\partial_{ij}\mu - (c\ast g)\mu + M\chi_R g,
$$
for $M$ and $R$ large enough chosen before.
Thanks to the smooth cut-off function $\chi_R$, for any $q\in[1,+\infty)$, $p\geq q$ and any weight function $m$ satisfying (W), we easily observe that 
\beqn\label{eq:chiR}
\| M \chi_R g \|_{L^q(\mu^{-1/2})} \leq C \| \chi_R \mu^{-1/2} m^{-1} \|_{L^{pq/(p-q)}} \| g\|_{L^{p}(m)} 
\leq C \| g \|_{L^p(m)},
\eeqn
from which we deduce that $M \chi_R \in \BBB(L^p(m), L^q(\mu^{-1/2}))$.

\smallskip

Let us now focus on the operator $\AA_0$. 

\begin{lem}\label{lem:A0}
Let $\gamma\in (-2,0)$ and $q \in [1,2]$. 

\begin{enumerate}[(i)]

\item If $1 \le q < 3/|\gamma|$ then
$$
\| \AA_0 g \|_{L^q(\mu^{-1/2})} \lesssim \| g \|_{L^1(\la v \ra^{\gamma+2})}  +\| g \|_{L^1} +\| g \|_{L^q}.
$$

\item If $\gamma \in (-2, - 3/2 ]$ and $3/|\gamma| \le q \le 2$ then
$$
\| \AA_0 g \|_{L^q(\mu^{-1/2})} \lesssim \| g \|_{L^1(\la v \ra^{\gamma+2})} + \| g \|_{L^{\frac{3}{4 + \gamma}}}.
$$

\end{enumerate}

As a consequence, for any $1 \le p \le 2$ and $m$ satisfying (W1) there hold: 
\begin{itemize}

\item $\AA \in \BBB(L^2(\mu^{-1/2}))$; 

\item $\AA \in \BBB(L^p(m))$ and moreover $\AA \in \BBB(L^p(m), L^p(\mu^{-1/2}))$.

\end{itemize}

\end{lem}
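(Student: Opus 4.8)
The plan is to prove the two estimates $(i)$ and $(ii)$ for $\AA_0$ by exploiting the structure $\AA_0 g = (a_{ij}\ast g)\,\partial_{ij}\mu - (c\ast g)\,\mu$, where the Gaussian factors $\partial_{ij}\mu$ and $\mu$ provide arbitrary decay and in particular absorb the weight $\mu^{-1/2}$. Thus, up to a fixed Gaussian weight, it suffices to bound $\|(a_{ij}\ast g)\,\mu^{1/4}\|_{L^q}$ and $\|(c\ast g)\,\mu^{1/4}\|_{L^q}$ (the precise Gaussian exponent is irrelevant). Recalling from \eqref{eq:aij} and \eqref{eq:bc} that $|a_{ij}(z)| \lesssim |z|^{\gamma+2}$ and $|c(z)| \lesssim |z|^{\gamma}$, the task reduces to controlling convolutions of $g$ with the locally singular, at-infinity-growing kernels $|z|^{\gamma+2}$ and $|z|^\gamma$. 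For the $c\ast g$ term I would split $|z|^\gamma = |z|^\gamma \mathbf 1_{|z|\le 1} + |z|^\gamma \mathbf 1_{|z|>1}$: the far part is bounded (since $\gamma<0$) and contributes $\lesssim \|g\|_{L^1}$ after multiplying by the Gaussian; the near part $|z|^\gamma \mathbf 1_{|z|\le 1}$ lies in $L^r$ for any $r < 3/|\gamma|$, so by Young's inequality $\| (|z|^\gamma\mathbf 1_{|z|\le 1})\ast g\|_{L^q} \lesssim \|g\|_{L^q}$ when $1\le q < 3/|\gamma|$, giving case $(i)$. For case $(ii)$, when $q \ge 3/|\gamma|$ one can no longer keep $q$ on the right for the singular kernel; instead one uses the Hardy–Littlewood–Sobolev inequality: $|z|^\gamma\mathbf 1_{|z|\le 1}$ is dominated by $|z|^\gamma$, and convolution against $|z|^{-|\gamma|}$ maps $L^{r}\to L^q$ with $1/q = 1/r - (3-|\gamma|)/3 = 1/r + \gamma/3$, i.e. $r = 3/(4+\gamma)$ (note $\gamma\in(-2,-3/2]$ forces $4+\gamma\in(2,5/2]$ so $r\in[6/5,3/2)$ is a legitimate Lebesgue exponent $<3$); this produces the $\|g\|_{L^{3/(4+\gamma)}}$ term.

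For the $a_{ij}\ast g$ term the kernel $|z|^{\gamma+2}$ has exponent $\gamma+2 \in (0,2)$, hence is locally bounded but grows at infinity; I would bound $|z|^{\gamma+2} \lesssim \la z\ra^{\gamma+2} \lesssim \la v\ra^{\gamma+2}\la v_*\ra^{\gamma+2}$ by Peetre's inequality (as already used in Lemma~\ref{lem:Jalpha}), so that $|(a_{ij}\ast g)(v)| \lesssim \la v\ra^{\gamma+2}\int \la v_*\ra^{\gamma+2}|g(v_*)|\,dv_* = \la v\ra^{\gamma+2}\|g\|_{L^1(\la v\ra^{\gamma+2})}$; multiplying by $\mu^{1/4}$ kills the polynomial prefactor and yields the $\|g\|_{L^1(\la v\ra^{\gamma+2})}$ contribution uniformly in $q\in[1,2]$. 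Collecting the two pieces gives exactly the stated right-hand sides, with the extra harmless $\|g\|_{L^1}$ in $(i)$ coming from the far part of $c$ and the fact that in $(ii)$ the $L^{3/(4+\gamma)}$ norm already controls (locally) an $L^1$ contribution after combining with the $L^1_{\gamma+2}$ bound on dyadic shells. Once $\AA_0$ is handled, the bullet-point consequences follow by combining with \eqref{eq:chiR}: for the $L^2(\mu^{-1/2})$ statement take $q=2$ (legitimate since if $\gamma\in(-3/2,0)$ then $2<3/|\gamma|$ and case $(i)$ applies, while if $\gamma\in(-2,-3/2]$ case $(ii)$ applies) and control every norm on the right-hand side of $(i)$ or $(ii)$ by $\|g\|_{L^2(\mu^{-1/2})}$ using that $\mu^{-1/2}$ dominates any polynomial weight and, via Cauchy–Schwarz against a Gaussian, also dominates the relevant sub-$L^2$ Lebesgue norms; for $\AA\in\BBB(L^p(m))$ and $\AA\in\BBB(L^p(m),L^p(\mu^{-1/2}))$ with $1\le p\le 2$, one uses $L^p(m)\hookrightarrow L^1(\la v\ra^{\gamma+2})$ and $L^p(m)\hookrightarrow L^q$ for the relevant $q$ (again because the stretched exponential weight $m$ beats polynomials and allows Hölder down to any Lebesgue exponent $\le p$), so that $\|\AA_0 g\|_{L^p(\mu^{-1/2})}\lesssim \|\AA_0 g\|_{L^q(\mu^{-1/2})}$ after absorbing into a Gaussian, and finally add the $M\chi_R g$ term via \eqref{eq:chiR}.

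The main obstacle is the borderline exponent $q = 3/|\gamma|$ for the singular kernel $|z|^\gamma$ in the $c\ast g$ term: for $\gamma$ close to $0$ one has $3/|\gamma|$ large and Young's inequality with an $L^r$ near-singular kernel ($r$ slightly below $3/|\gamma|$) suffices, but as $\gamma\to -3/2^+$ the threshold $3/|\gamma|$ drops to $2$, and for $\gamma\le -3/2$ one is forced into the Hardy–Littlewood–Sobolev regime, which is why case $(ii)$ carries the worse $L^{3/(4+\gamma)}$ norm rather than an $L^q$ norm. One must check carefully that the HLS exponent $r=3/(4+\gamma)$ stays in the admissible open range $(1,3)$ throughout $\gamma\in(-2,-3/2]$ — it does, landing in $[6/5,3/2)$ — and that the endpoint $q=2$ is never the HLS endpoint (it is not, since $1/2 = (4+\gamma)/3 + \gamma/3$ would need $\gamma = -1$, not in the stated subrange, so we are strictly in the interior). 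A secondary, purely bookkeeping difficulty is verifying that all the auxiliary Lebesgue norms appearing on the right are genuinely controlled by $L^2(\mu^{-1/2})$ respectively $L^p(m)$; this is routine interpolation against Gaussians but should be stated explicitly so the reader sees why the weight assumption (W), and in particular $s>0$, is what makes $L^p(m)$ embed into every sub-$L^p$ Lebesgue space needed here.
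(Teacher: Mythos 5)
Your treatment of the $(a_{ij}*g)\,\partial_{ij}\mu$ term, of case $(i)$, and of the concluding bullet points is essentially the paper's argument: the Peetre bound $|a_{ij}(v-v_*)|\lesssim \la v\ra^{\gamma+2}\la v_*\ra^{\gamma+2}$ with the Gaussian absorbing all polynomial growth, and for $c*g$ a near/far splitting in which the far part yields $\|g\|_{L^1}$ and the near part is handled by Young's inequality (the paper uses Jensen plus Fubini, which amounts to the same thing; note that since $|z|^{\gamma}\mathbf{1}_{|z|\le 1}\in L^1$ your Young step in fact needs no restriction on $q$). For the consequences, one small caution: the inequality $\|\AA_0 g\|_{L^p(\mu^{-1/2})}\lesssim\|\AA_0 g\|_{L^q(\mu^{-1/2})}$ with $q<p$ is not a legitimate H\"older step (you cannot raise the Lebesgue exponent this way); the clean route, and the paper's, is simply to apply the lemma with $q=p$, which is always admissible for $p\in[1,2]$, and then use that $m$ dominates all polynomial weights.

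The genuine gap is in case $(ii)$. You discard the truncation, dominate by $|z|^{\gamma}$, and assert that convolution with $|z|^{-|\gamma|}$ maps $L^{3/(4+\gamma)}\to L^q$ for $q\in[3/|\gamma|,2]$. This is false: the Hardy--Littlewood--Sobolev relation in $\R^3$ is $1/q=1/r-(3-|\gamma|)/3=1/r-1-\gamma/3$ (your simplification to $1/r+\gamma/3$ drops the $-1$), and with $r=3/(4+\gamma)$ it forces $q=3$ exactly; for any $q<3$ the unweighted bound $\||z|^{\gamma}*g\|_{L^q}\lesssim\|g\|_{L^{3/(4+\gamma)}}$ fails by scaling (test $g_\delta(v)=g(v/\delta)$: the left side scales like $\delta^{3+\gamma+3/q}$, the right like $\delta^{4+\gamma}$). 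Consequently your endpoint "check" at the end, built on the relation $1/q=1/r+\gamma/3$, is vacuous; and aiming HLS directly at the given $q$ would instead produce $\|g\|_{L^r}$ with $1/r=1/q+1+\gamma/3$, which is not the stated norm and degenerates to the forbidden endpoint $r=1$ when $q=3/|\gamma|$. The missing step is precisely where the Gaussian factor must be used: write $\|(c*g)\mu\|_{L^q(\mu^{-1/2})}=\|(c*g)\mu^{1/2}\|_{L^q}\le\|c*g\|_{L^3}\,\|\mu^{1/2}\|_{L^{3q/(3-q)}}$ by H\"older (finite because $q\le 2<3$), and only then apply HLS with target $L^3$, which is exactly the pairing $r=3/(4+\gamma)$; this is the paper's proof. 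Alternatively, keep the truncation $\mathbf{1}_{|z|\le1}$ you threw away and apply Young's inequality with the kernel in $L^{p}$, $1/p=1+1/q-(4+\gamma)/3$ (one checks $1\le p<3/|\gamma|$ precisely because $q<3$), which also yields the $\|g\|_{L^{3/(4+\gamma)}}$ term, the far part being absorbed since $\|g\|_{L^1}\le\|g\|_{L^1(\la v\ra^{\gamma+2})}$. As written, however, case $(ii)$ does not hold together.
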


\begin{proof}

For any $1 \le q \le 2$ we write 
$$
\|\AA_0 g \|_{L^q(\mu^{-1/2})} \leq \|(a_{ij}\ast g)\partial_{ij}\mu \|_{L^q(\mu^{-1/2})} 
+ \|(c\ast g)\mu \|_{L^q(\mu^{-1/2})}  ,
$$
and we estimate each term separately.
For the first term, since $|a_{ij}(v-v_*)| \leq C \la v\ra^{\gamma+2} \la v_* \ra^{\gamma+2}$ and $|\partial_{ij} \mu(v)| \le C \la v \ra^2 \mu$, we easily obtain
$$
\bal
\|(a_{ij}\ast g)\partial_{ij}\mu \|_{L^q(\mu^{-1/2})}^q 
&\lesssim   \|  g \|_{L^1(\la v\ra^{\gamma+2})}^q \int \la v \ra^{(\gamma+4)q}  \, \mu^{q/2} \lesssim  \|  g \|_{L^1(\la v\ra^{\gamma+2})}^q .
\eal
$$
For the second term we separate into two cases.

\medskip
\noindent
{$(i)$} Suppose $1 \le q < 3/|\gamma|$. We decompose $c = c_- + c_+$ with $c_- = c {\mathbf 1}_{|\cdot|\le 1}$ and $c_+ = c {\mathbf 1}_{|\cdot| >1}$. We easily bound
$$
| (c_+ * g)(v) | \lesssim \int_{v_*} {\mathbf 1}_{|v-v_*|> 1} \, |v-v_*|^\gamma \, |g_*| \lesssim \| g \|_{L^1},
$$
hence
$$
\| (c_+ * g) \mu \|_{L^q(\mu^{-1/2})} = \| (c_+ * g) \mu^{1/2} \|_{L^q} \lesssim \| g \|_{L^1}.
$$
For the other term, we get 
$$
\bal
\|(c_- * g)\mu \|_{L^q(\mu^{-1/2})}^q
&\lesssim \int_v \left| \int_{v_*} {\mathbf 1}_{|v-v_*| \le 1} \, |v-v_*|^\gamma  g_* \, d v_*\right|^q \,  \mu^{q/2}  \\
&\lesssim\int_v \int_{v_*} {\mathbf 1}_{|v-v_*| \le 1} \,|v-v_*|^{\gamma q}  |g_*|^q \,  \mu^{q/2}  \\
&\lesssim \int_{v_*} \left( \int_v {\mathbf 1}_{|v-v_*| \le 1} \,|v-v_*|^{\gamma q}  \mu^{q/2} \, d v \right)   |g_*|^q  \\
&\lesssim C_\mu \, \| g \|_{L^q}^q,
\eal
$$
where we have used Jensen's inequality at the first line and, in the last line, the integral in $v$ is bounded since $ q < 3/|\gamma|$. This concludes the proof of point {$(i)$}.

\medskip
\noindent
{$(i)$} Now suppose $\gamma \in (-2,-3/2]$ and $3/|\gamma| \le q \le 2$.
We write then
$$
\bal
\|(c * g)\mu \|_{L^q(\mu^{-1/2})} 
&= \|(c * g)\mu^{1/2} \|_{L^q} 
\le \| (c * g) \|_{L^3} \, \| \mu^{1/2} \|_{L^{\frac{3q}{3-q}}} \\
&\lesssim \| g \|_{L^\frac{3}{4+\gamma}} \, \| \mu^{1/2} \|_{L^{\frac{3q}{3-q}}},
\eal
$$
where we have used H\"older's inequality in first line and Hardy-Littlewood-Sobolev inequality in the second one. This gives point $(ii)$.

\medskip

The conclusion of the lemma is a easy consequence of the above estimates and \eqref{eq:chiR}, observing that in the case $(i)$ we have
$$
\| g \|_{L^1(\la v \ra^{\gamma+2})} +\| g \|_{L^1} +\| g \|_{L^q} \lesssim \| g \|_{L^q(m)}
$$ 
and in the case $(ii)$
$$
\| g \|_{L^1(\la v \ra^{\gamma+2})} + \| g \|_{L^{\frac{3}{4 + \gamma}}}
\lesssim \| g \|_{L^q(m)},
$$
for any weight function $m$ satisfying (W).
\end{proof}

We prove now a regularisation estimate for the convolution of $\AA \SS_\BB(t)$.
Let $m_0 := \exp(\kappa_0 \la v \ra^s)$ and $m_1 := \exp(\kappa_1 \la v \ra^s)$ be weight functions satisfying (W) with $\kappa_1 > \kappa_0$, so that $m_0 \le C m_1$. 

\begin{lem}\label{lem:reg}
Let $\gamma \in (-2,0)$.
Consider $1 \le p \le 2$, then there exists $C>0$ such that
\beqn\label{eq:regBB}
\forall\, t\ge 0, \qquad
\| \SS_\BB(t)  \|_{\BBB(L^p(m_1) , L^2(m_0))} \leq C\, t^{-\frac32(\frac1p-\frac12)} \, e^{-\lambda t}.
\eeqn
As a consequence, for all $1\le p \le 2$ and $m$ satisfying assumption (W), 
for any $\lambda' < \lambda$ ($\lambda > 0$ fixed in Lemma~\ref{lem:hypoBB}) we have
\beqn\label{eq:regAB}
\forall\, t\geq 0, \qquad
\| (\AA\SS_\BB)^{*2}(t)  \|_{\BBB(L^p(m) , L^2(\mu^{-1/2}))} \leq C\, e^{-\lambda' t},.
\eeqn

\end{lem}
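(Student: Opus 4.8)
The plan is to establish the ultracontractivity-type estimate \eqref{eq:regBB} first, and then derive \eqref{eq:regAB} from it by combining with the boundedness properties of $\AA$ from Lemma~\ref{lem:A0}.

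For \eqref{eq:regBB}, I would proceed by a Nash-type argument exploiting the dissipativity structure of $\BB$. The starting point is the differential inequality one obtains when estimating $\frac{d}{dt}\|h\|_{L^p(m_0)}^p$ along the flow of $\BB$: as in the proof of Lemma~\ref{lem:hypoBB} (see \eqref{eq:Lpm}), one picks up the good term $-(p-1)\int \bar a\,\nabla h\,\nabla h\,|h|^{p-2}m_0^p$ together with the lower-order term $\int(\varphi_{m_0,p}-M\chi_R)|h|^pm_0^p$, which by Lemma~\ref{lem:phi} is bounded above by $-\lambda\int|h|^pm_0^p$ (after enlarging $M,R$). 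Since $\bar a_{ij}\xi_i\xi_j\gtrsim \la v\ra^\gamma|\xi|^2$ by Lemma~\ref{lem:bar-aij}(a)--(b), the good term dominates (a constant times) $\int \la v\ra^\gamma|\nabla(|h|^{p/2})|^2 m_0^p$ up to correction terms coming from differentiating $m_0^p$, which are again absorbed into the $\varphi$-term. The weight $\la v\ra^\gamma$ degenerates at infinity, but since $\kappa_1>\kappa_0$ the gap $m_1/m_0 = \exp((\kappa_1-\kappa_0)\la v\ra^s)\to\infty$ faster than any polynomial, so one can trade a factor of $\la v\ra^{-\gamma}$ (as well as any polynomial correction) against the weight without cost; this is the mechanism by which the degeneracy of the diffusion is defeated and a genuine (global) Sobolev/Nash inequality becomes available. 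Applying the Nash inequality in $\R^3$ (in the $L^{p}\to L^p$, gain-of-derivative form, interpolated with $L^1$ control which comes from $p\le 2$ and mass-type bounds on $\SS_\BB$) yields a closed differential inequality of the form $\frac{d}{dt}u \le -c\,u^{1+\theta}e^{\lambda t\cdot(\ldots)} + \ldots$ with $\theta = \frac{2}{3}(\frac1p-\frac12)^{-1}$-type exponent; integrating it produces the algebraic blow-up rate $t^{-\frac32(\frac1p-\frac12)}$ for short time, while the exponential factor $e^{-\lambda t}$ is carried along throughout because every estimate already contains the dissipation $-\lambda$. A clean way to organise this is: first prove the $L^1(m_1)\to L^2(m_0)$ bound with $p=1$ by the Nash argument, then interpolate between this and the contraction estimate $\|\SS_\BB(t)\|_{\BBB(L^2(m_0))}\le e^{-\lambda t}$ from \eqref{eq:SBexp} (applied with $m_0$) to get all $p\in[1,2]$.

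For \eqref{eq:regAB}, I would write $(\AA\SS_\BB)^{*2}(t) = \int_0^t \AA\SS_\BB(t-\tau)\AA\SS_\BB(\tau)\,d\tau$ and estimate the integrand in $\BBB(L^p(m),L^2(\mu^{-1/2}))$ by factoring through an intermediate space. Fix weights $m_0\le m_1\le m$ all satisfying (W) with increasing constants. By Lemma~\ref{lem:A0}, $\AA\in\BBB(L^p(m),L^p(\mu^{-1/2}))\subset \BBB(L^p(m),L^p(m_1))$ and also $\AA\in\BBB(L^2(\mu^{-1/2}))$ with (crucially) $\AA\in\BBB(L^2(m_0),L^2(\mu^{-1/2}))$ since $\mu^{-1/2}\le C m_0^{-1}\cdot(\ldots)$ is false but $\AA$ maps into $L^2(\mu^{-1/2})$ from any $L^2(m_0)$ by the stated mapping $\AA\in\BBB(L^p(m),L^p(\mu^{-1/2}))$ read with $p=2$, $m=m_0$. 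Then for the inner factor, $\|\AA\SS_\BB(\tau)\|_{\BBB(L^p(m),L^2(m_0))}\le \|\AA\|_{\BBB(L^2(m_0))}\,\|\SS_\BB(\tau)\|_{\BBB(L^p(m),L^2(m_0))} \lesssim \tau^{-\frac32(\frac1p-\frac12)}e^{-\lambda\tau}$ by \eqref{eq:regBB}; for the outer factor, $\|\AA\SS_\BB(t-\tau)\|_{\BBB(L^2(m_0),L^2(\mu^{-1/2}))}\le \|\AA\|_{\BBB(L^2(m_0),L^2(\mu^{-1/2}))}\,e^{-\lambda(t-\tau)}$ using \eqref{eq:SBexp} on $L^2(m_0)$ and the boundedness of $\AA$ into $L^2(\mu^{-1/2})$. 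Multiplying and integrating,
$$
\| (\AA\SS_\BB)^{*2}(t)\|_{\BBB(L^p(m),L^2(\mu^{-1/2}))}
\lesssim \int_0^t (t-\tau)^{0}\,\tau^{-\frac32(\frac1p-\frac12)}\,e^{-\lambda t}\,d\tau
\lesssim e^{-\lambda t}\int_0^t \tau^{-\frac32(\frac1p-\frac12)}\,d\tau,
$$
and since $p\ge 1$ gives $\frac32(\frac1p-\frac12)\le \frac34 <1$, the time integral converges and grows at most polynomially, so it is absorbed by replacing $\lambda$ with any $\lambda'<\lambda$, yielding \eqref{eq:regAB}.

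The main obstacle is the first step: making the Nash/ultracontractivity argument work in the presence of the degenerate diffusion coefficient $\bar a\sim\la v\ra^\gamma$ with $\gamma<0$, which a priori only gives a weighted (degenerate) Sobolev inequality rather than the flat one needed for the $t^{-3/2(1/p-1/2)}$ rate. The resolution — and the technical heart of the proof — is that one does not work with the bare $L^p$ norm but with $L^p(m_0)$ and exploits that the \emph{strictly larger} weight $m_1$ allows one to dominate $\la v\ra^{-\gamma}|h|^p m_0^p \le C\,|h|^p m_1^p$ with constant depending on $\kappa_1-\kappa_0$; equivalently, the confinement provided by the stretched-exponential weight more than compensates the loss of ellipticity at infinity, so that after this substitution a genuine non-degenerate Nash inequality on $\R^3$ can be invoked. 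One must check carefully that all the lower-order/commutator terms generated by differentiating $m_0^p$ (which carry extra powers of $\la v\ra^{s-1}$ against the gradient, and $\la v\ra^{s+\gamma}$, $\la v\ra^{2s+\gamma-2}$ in zeroth order) are likewise absorbed — either into the $-\lambda$ coming from $\varphi_{m_0,p}-M\chi_R$ or into the weight gap — and that the exponential decay factor is preserved through the interpolation step; none of this is deep but it is where the assumption $s+\gamma>0$ of (W) is used again and where care is required.
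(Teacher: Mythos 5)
Your proposal follows essentially the same route as the paper's proof: the $L^2(m_0)$ energy estimate coming from the dissipativity computation \eqref{eq:Lpm}, the coercivity $\bar a \ge K\la v\ra^\gamma I$, absorption of the polynomial corrections into the weight gap $m_1/m_0$, a Nash-type inequality combined with the propagated $L^1(m_1)$ bound to close an ODE of the form $\dot X \le -cX^{1+2/3}Y_0^{-4/3}-2\lambda X$ giving the $t^{-3/4}e^{-\lambda t}$ rate for $p=1$, Riesz--Thorin interpolation with \eqref{eq:SBexp}, and finally the convolution estimate for $(\AA\SS_\BB)^{*2}$ using the mapping properties of $\AA$ from Lemma~\ref{lem:A0}. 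The only differences are cosmetic: the paper implements the Nash step through the weighted inequality $\|\la v\ra^{\alpha}g\|_{L^2}\lesssim\|\nabla g\|_{L^2}^{3/5}\|\la v\ra^{5\alpha/2}g\|_{L^1}^{2/5}$ with $g=\la v\ra^{\gamma/2}m_0 f$ (so the polynomial loss lands on the $L^1$ side, where it is absorbed by $m_1$, rather than invoking a flat Nash after ``trading'' the degeneracy), and in the Duhamel convolution it places the $(t-s)^{-\frac32(\frac1p-\frac12)}$-smoothing on the outer factor $\AA\SS_\BB(t-s)$ instead of the inner one, which leads to the same integrable singularity and the same conclusion.
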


\begin{proof}[Proof of Lemma \ref{lem:reg}]
We split the proof into two steps.

\medskip
\noindent
\textit{Step 1.} We first prove \eqref{eq:regBB} for $p=1$. 
Consider the equation $\partial_t f = \BB f$. Then from \eqref{eq:Lpm} we have
$$
\frac12\frac{d}{dt}\| f \|_{L^2(m_0)}^2 = - \int \bar a \nabla f \nabla f m^2  + \int(\varphi_{m_0,2} - M\chi_R) m_0^2 f^2
$$
From Lemma~\ref{lem:bar-aij}, there exists $K >0$ such that  $\bar a_{ij} \xi_i \xi_j \geq K \la v \ra^{\gamma} |\xi|^2$, which yields
$$
\bal
\frac12\frac{d}{dt}\| f \|_{L^2(m_0)}^2
&\leq - K \int \la v \ra^{\gamma}| \nabla f|^2 m_0^2   + \int(\varphi_{m_0,2} - M\chi_R) m_0^2 f^2,
\eal 
$$
and, from 
$$
\bal
| \nabla ( \la v \ra^{\gamma} m_0 f)|^2  
&\le C \left\{  \la v \ra^{\gamma} m_0^2 | \nabla  f|^2 
+ C \la v \ra^{\gamma+2s-2}  m_0^2 f^2 \right\},
\eal
$$ 
it follows that
\beqn\label{eq:reg}
\bal
\frac12\frac{d}{dt}\| f \|_{L^2(m_0)}^2
&\leq - K \int | \nabla (\la v \ra^{\gamma} m_0 f)|^2   + \int(\tilde \varphi_{m_0,2} - M\chi_R) m_0^2 f^2 ,
\eal 
\eeqn
where
$$
\tilde \varphi_{m_0,2}(v) =  \varphi_{m_0,2}(v) + C \la v \ra^{\gamma + 2s-2} .
$$
From Lemma~\ref{lem:phi} we easily see that $\tilde\varphi_{m_0,2} \underset{|v|\to +\infty}{\sim} \varphi_{m_0,2}$, then for all $\lambda \geq 0$ we can chose $M$ and $R$ large enough such that $\tilde\varphi_{m_0,2}(v)  - M\chi_R(v) \leq -\lambda$, and moreover estimate \eqref{eq:SBexp} holds.

Applying the following inequality (which can be obtained by H\"older's inequality followed by Sobolev embedding in dimension $d=3$):
$$ 
{\| \la v \ra^\alpha g \|}_{L^2} \leq c_1 {\| \nabla g\|}_{L^2}^{3/5} \, {\| \la v \ra^{5\alpha/2} g\|}_{L^1}^{2/5} 
$$ 
with $g = \la v \ra^{\gamma/2} m_0 f$ and $\alpha = -\gamma/2$ to \eqref{eq:reg}, it follows
\beqn\label{eq:reg2}
\bal
\frac12\frac{d}{dt} {\|  f \|}_{L^2(m_0)}^2
&\leq -K {\| f \|}_{L^2(m_0)}^{10/3} \| \la v \ra^{-3\gamma/4}  f\|_{L^1(m_0)}^{-4/3}
 - \lambda \| f \|_{L^2(m_0)}^2 \\
&\leq -K {\| f \|}_{L^2(m_0)}^{10/3} \| f\|_{L^1(m_1)}^{-4/3}  - \lambda \| f \|_{L^2(m_0)}^2.
\eal 
\eeqn
Recall that the weight functions $m_0$ and $m_1$ satisfy assumption (W), then Lemma~\ref{lem:hypoBB} holds, more precisely, for all $t\ge 0$,
\beqn\label{eq:hypo-m0}
\bal
&\| \SS_\BB(t) f \|_{L^p(m_0)} \leq e^{-\lambda t} \| f\|_{L^p(m_0)} 
\quad\text{ and } \quad 
\| \SS_\BB(t) f \|_{L^p( m_1)} \leq e^{-\lambda t} \| f\|_{L^p(m_1)}  .
\eal
\eeqn
Let us denote now
$$
X(t) := \| f(t) \|_{L^2(m_0)}^2 
\quad\text{ and } \quad 
Y(t) :=  \| f(t) \|_{L^1( m_1)}.
$$
For all $t \ge 0$ we have $Y(y) \leq  Y_0$ from \eqref{eq:hypo-m0}, which together with \eqref{eq:reg2} gives
\beqn\label{eq:dotX}
\dot X(t) \leq - 2K X(t)^{1+2/3} Y_0^{-4/3} - 2 \lambda X(t).
\eeqn
Arguing as \cite[Lemma 3.9]{GMM} we obtain that
$$
\forall\, t\geq 0 \qquad X(t) \leq C \, t^{-3/2} \, e^{-2\lambda t} \, Y_0^2,
$$
which concludes the proof of \eqref{eq:regBB} when $p=1$. Then for any $1<p<2$ we use Riesz-Thorin interpolation theorem, with $\SS_\BB : L^2(m_0) \to L^2(m_0)$ and $\SS_\BB : L^1(m_1) \to L^2(m_0) $, to conclude to \eqref{eq:regBB}.

\medskip
\noindent
\textit{Step 2.} 
Let us prove now \eqref{eq:regAB}. 
From Lemma \ref{lem:A0} we have the following estimates, for any $p\in[1,2]$,
\beqn\label{eq:regAA}
\bal
\| \AA g \|_{L^2(\mu^{-1/2})} \lesssim \| g \|_{L^2(m_0)} ,
\qquad
\| \AA g \|_{L^p(m_0)} \lesssim \| g \|_{L^p(m)} .
\eal
\eeqn
Hence, by \eqref{eq:regAA} and \eqref{eq:regBB}, for $1\leq p \leq 2$, it follows
\beqn
\bal
\| \AA\SS_\BB(t) f \|_{L^2(\mu^{-1/2})} 
&\lesssim \| \SS_\BB(t) f \|_{L^2(m_0)} 
\lesssim t^{-\frac32(\frac1p-\frac12)} e^{- \lambda t}\, \| f \|_{L^p( m_1)}.
\eal
\eeqn
Computing the convolution of $\AA\SS_\BB(t)$ we have 
$$
\bal
\| (\AA\SS_\BB)^{*2}(t) f \|_{L^2(\mu^{-1/2})} 
&\lesssim \int_0^t \|\AA\SS_\BB(t-s) \AA\SS_\BB(s) f \|_{L^2(\mu^{-1/2})} \, ds\\
&\lesssim \int_0^t \|\SS_\BB(t-s) \AA\SS_\BB(s) f \|_{L^2(m_0)} \, ds\\
&\lesssim \int_0^t (t-s)^{-\frac32(\frac1p-\frac12)} e^{-\lambda(t-s)}\,\| \AA\SS_\BB(s) f \|_{L^p(m_1)} \, ds\\
&\lesssim \int_0^t (t-s)^{-\frac32(\frac1p-\frac12)} e^{-\lambda(t-s)}\,\| \SS_\BB(s) f \|_{L^p(m)} \, ds\\
&\lesssim \int_0^t (t-s)^{-\frac32(\frac1p-\frac12)} e^{-\lambda(t-s)}\, e^{-\lambda s} \, \| f \|_{L^p(m)} \, ds\\
&\lesssim \, t^{(\frac74 - \frac{3}{2p} )} \, e^{- \lambda t}\,  \| f \|_{L^p(m)}\\
&\lesssim e^{- \lambda' t} \, \| f \|_{L^p(m)},
\eal
$$
where we have used successively \eqref{eq:regAA}, \eqref{eq:regBB}, \eqref{eq:regAA} and Lemma~\ref{lem:hypoBB} with $1\leq p \le 2$, which concludes the proof.
\end{proof}

\subsection{Proof of Theorem~\ref{thm:trou}}

We are know able to prove Theorem~\ref{thm:trou} that extends to various weighted $L^p$-spaces the semigroup decay estimate known to hold on $L^2(\mu^{-1/2})$ as presented Proposition~\ref{prop:gap}.

\medskip

Let $E = L^2(\mu^{-1/2})$, in which space we already know that there is a spectral gap $\lambda_0 >0$ from Proposition~\ref{prop:gap}, and $\EE = L^p(m)$, for any $p\in[1,2]$ and $m$ satisfying assumption (W). We consider the decomposition $\LL = \AA + \BB$ as in \eqref{eq:AB}. For any $\lambda > 0$, the operator $\BB+\lambda$ is hypo-dissipative in $\EE$ from Lemma~\ref{lem:hypoBB}, moreover $\AA \in \BBB(\EE)$ and $A \in \BBB(E)$ from Lemma~\ref{lem:A0}. Finally, from Lemma \ref{lem:reg} we have that $(\AA\SS_\BB)^{*2}(t) \in \BBB(\EE,E)$ with an exponential decay rate $\| (\AA\SS_\BB)^{*2}(t) \|_{\BBB(\EE,E)} \le C_{\lambda'} \, e^{-\lambda' t}$ for any $\lambda' < \lambda$. 
Then the result of Theorem~\ref{thm:trou} follows from \cite[Theorem 2.13]{GMM}.

\section{A priori estimates}\label{sec:apriori}

The purpose of this section is to establish a priori estimates for the (nonlinear) Landau equation that will be of crucial importance in the proof of the main results in Section~\ref{sec:conv}.

\smallskip

Let us recall the Landau equation that is given by
$$
\partial_t f = Q(f,f)
$$
with
$$
Q(g,f) = \nabla \cdot \{ (a*g)\nabla f - (b*g)f \} = (a_{ij} * g)\partial_{ij} f - (c*g)f.
$$

\subsection{Preliminaries}

Denoting $\bar a_g = a*g$, $\bar b_g  = b * g$, $\bar c_g = c*g$ and considering some weight function $m$, we easily compute
$$
\bal
\int Q(g,f) \, f^{p-1} \, m^p 
&= \int \nabla \cdot \{ \bar a_g \nabla f - \bar b f   \} f^{p-1} \, m^p \\
&= - \int \bar a_g \nabla f \nabla (f^{p-1}) m^p - \int \bar a_g \nabla f \nabla m^p \, f^{p-1} \\
&\quad
+\int \bar b_g f \nabla(f^{p-1}) m^p 
+ \int \bar b_g \nabla m^p \, f^p.
\eal
$$
It follows that
\beqn\label{eq:dtLpm}
\bal
\int Q(g,f) \, f^{p-1} \, m^p 
&= -\frac4p(1-1/p)\int_v \bar a_g \nabla(f^{p/2}) \nabla(f^{p/2})\, m^p \\
&\quad 
+ \int_v \int_{v_*} \Theta_{m,p}(v,v_*)\, g_* \, f^p\, m^p \\
&\quad 
+(1/p-1)\int_v \int_{v_*} c(v-v_*)\, g_* \, f^p\, m^p
\eal
\eeqn
where
\beqn\label{eq:Theta}
\bal
\Theta_{m,p}(v,v_*) &= a(v-v_*) : \frac{D^2 m}{m}(v) + (p-1)\, a(v-v_*) \, \frac{\nabla m}{m}(v) \, \frac{\nabla m}{m}(v) \\
&\quad
+ 2 b(v-v_*) \cdot \frac{\nabla m}{m}(v).
\eal
\eeqn
In the particular case of a polynomial weight $m = \la v \ra^k$, we have
\beqn\label{eq:Theta}
\bal
\Theta_{m,p}(v,v_*) 
&= k |v-v_*|^{\gamma} \la v \ra^{-2} (-2 \la v \ra^2 + 2 \la v_* \ra^2) \\
&\quad
+ k(kp-2) | v-v_*|^{\gamma} \la v \ra^{-4} \left[ |v|^2 |v_*|^2 - (v\cdot v_*)^2 \right].
\eal
\eeqn

\medskip

We recall the following elementary interpolation inequalities.

\begin{lem}\label{lem:ineg}
Let $k,\ell \in \R_+$.
For all $\eps >0$ there is $C_\eps$ such that
$$
{\| g \|}_{\dot H^k }^2 \leq \eps {\| g \|}_{\dot H^{k+\ell} }^2 + C_\eps {\| g \|}_{L^1 }^2,
$$
$$
{\| g \|}_{\dot H^k }^2 \leq \eps {\| g \|}_{\dot H^{k+\ell} }^2 + C_\eps {\| g \|}_{L^2 }^2.
$$
\end{lem}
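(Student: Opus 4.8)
\textbf{Approach.} These are standard interpolation (Gagliardo--Nirenberg type) inequalities, and the cleanest way to prove them is via the Fourier transform combined with Young's inequality in the form $ab \le \eps a^r + C_\eps b^{r'}$. The first inequality interpolates the homogeneous $\dot H^k$ norm between $\dot H^{k+\ell}$ and $L^1$; the second between $\dot H^{k+\ell}$ and $L^2$. The second is actually the easier one and purely a Fourier-side computation; the first requires one extra elementary estimate controlling $\|\hat g\|_{L^\infty}$ by $\|g\|_{L^1}$.

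\textbf{Key steps.} First I would handle the second inequality. By Plancherel, $\|g\|_{\dot H^k}^2 = \int |\xi|^{2k} |\hat g(\xi)|^2\, d\xi$. Split the integral at $|\xi| = \rho$ for a threshold $\rho>0$ to be chosen. On $|\xi|\ge\rho$ bound $|\xi|^{2k} \le \rho^{-2\ell} |\xi|^{2(k+\ell)}$, giving a contribution $\le \rho^{-2\ell}\|g\|_{\dot H^{k+\ell}}^2$. On $|\xi|<\rho$ bound $|\xi|^{2k} \le \rho^{2k}$, giving a contribution $\le \rho^{2k}\|g\|_{L^2}^2$. Choosing $\rho$ large makes $\rho^{-2\ell} = \eps$, and then $C_\eps := \rho^{2k} = \eps^{-k/\ell}$ works. (If $k=0$ the inequality is trivial with $\eps=0$ irrelevant, so assume $k>0$.) For the first inequality, I would use the same dyadic-in-frequency split, but on the low-frequency part estimate instead $\int_{|\xi|<\rho} |\xi|^{2k}|\hat g(\xi)|^2\, d\xi \le \|\hat g\|_{L^\infty}^2 \int_{|\xi|<\rho} |\xi|^{2k}\, d\xi \le C\,\rho^{2k+3}\, \|g\|_{L^1}^2$, using $\|\hat g\|_{L^\infty} \le \|g\|_{L^1}$ and the fact that we are in dimension $d=3$ (so $\int_{|\xi|<\rho}|\xi|^{2k}\,d\xi = c\,\rho^{2k+3}$, finite since $2k+3>0$). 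The high-frequency part is handled exactly as before, yielding $\rho^{-2\ell}\|g\|_{\dot H^{k+\ell}}^2$. Again choose $\rho$ with $\rho^{-2\ell}=\eps$ and set $C_\eps$ proportional to $\rho^{2k+3} = \eps^{-(2k+3)/(2\ell)}$.

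\textbf{Main obstacle.} There is essentially no obstacle here — this is a routine application of Plancherel plus a frequency cutoff, and the only mild subtlety is that the low-frequency term in the first inequality uses $\|\hat g\|_{L^\infty}\le\|g\|_{L^1}$ together with integrability of $|\xi|^{2k}$ near the origin in $\R^3$, which holds for all $k\ge 0$. One should also note that for $\ell = 0$ the statement is vacuous/trivial and for $k=0$ the first term already dominates, so one may assume $k,\ell>0$ without loss of generality. The constants produced are explicit powers of $1/\eps$, consistent with the scaling $g(v)\mapsto g(\lambda v)$.
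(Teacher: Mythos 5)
Your proof is correct and complete. The paper does not prove Lemma~\ref{lem:ineg} at all (it is merely ``recalled'' as elementary), and your Plancherel-plus-frequency-cutoff argument --- bounding $|\xi|^{2k}\le \rho^{-2\ell}|\xi|^{2(k+\ell)}$ for $|\xi|\ge\rho$, and on $|\xi|<\rho$ using either $|\xi|^{2k}\le\rho^{2k}$ or $\|\hat g\|_{L^\infty}\le\|g\|_{L^1}$ together with $\int_{|\xi|<\rho}|\xi|^{2k}\,d\xi\simeq\rho^{2k+3}$ in dimension three --- is exactly the standard proof the authors have in mind, with the explicit constants $C_\eps\simeq\eps^{-k/\ell}$ and $C_\eps\simeq\eps^{-(2k+3)/(2\ell)}$. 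One minor quibble: for $\ell=0$ the inequalities are not ``vacuous/trivial'' but in fact false for $\eps<1$ (they would force $\|g\|_{\dot H^k}\lesssim\|g\|_{L^1}$), so the correct reading is that the lemma implicitly assumes $\ell>0$, which is how it is used throughout the paper.
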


Moreover, we have an interpolation inequality for weighted Sobolev spaces from \cite{DesWen}:

\begin{lem}\label{lem:interpolation}
For any $\delta,\alpha \ge 0$ and $k \in \R$, there holds
$$
\| f \|_{H^k_l}^2 \le C_\delta \| f \|_{H^{k-\delta}_{l+\alpha}} \, {\| f \|}_{H^{k+\delta}_{l-\alpha}}.
$$
\end{lem}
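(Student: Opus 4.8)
The plan is to move to the Fourier side, where the inequality is essentially Cauchy--Schwarz up to a commutator, and then to absorb the resulting error by a short bootstrap. With the convention $\|f\|_{H^k_l}=\|\la v\ra^l f\|_{H^k}$ (all the usual conventions being equivalent up to lower-order terms), a direct computation (Plancherel together with the Fourier inversion formula) gives the identity $\|f\|_{H^k_l}^2=\|\la\xi\ra^k\la D_\xi\ra^l\hat f\|_{L^2}^2$, where $\la D_\xi\ra^l:=(1-\Delta_\xi)^{l/2}$ is the Fourier multiplier by $\la\cdot\ra^l$ acting in the variable dual to $\xi$. Setting $u=\hat f$, the assertion is equivalent to
$$
\|\la\xi\ra^k\la D_\xi\ra^l u\|_{L^2}^2\le C_\delta\,\|\la\xi\ra^{k-\delta}\la D_\xi\ra^{l+\alpha}u\|_{L^2}\,\|\la\xi\ra^{k+\delta}\la D_\xi\ra^{l-\alpha}u\|_{L^2},
$$
an estimate involving only the two operators $\la\xi\ra$ (multiplication) and $\la D_\xi\ra$, which commute modulo lower order. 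When $\delta=0$ and $k$ is an integer this is plain Cauchy--Schwarz, since $\la\xi\ra^{2k}=\la\xi\ra^{k-\delta}\la\xi\ra^{k+\delta}$ and $\la v\ra^{2l}=\la v\ra^{l+\alpha}\la v\ra^{l-\alpha}$ pointwise; the substance of the lemma is that one may trade smoothing against weight.

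The core step: write $\|\la\xi\ra^k\la D_\xi\ra^l u\|^2=\la\la\xi\ra^k\la D_\xi\ra^{2l}\la\xi\ra^k u,u\ra$, note that by Cauchy--Schwarz (splitting $\la D_\xi\ra^{2l}=\la D_\xi\ra^{l+\alpha}\la D_\xi\ra^{l-\alpha}$ and using that $\la\xi\ra^s$ and $\la D_\xi\ra^s$ are self-adjoint) the right-hand side above dominates the sesquilinear form $\la\la\xi\ra^{k-\delta}\la D_\xi\ra^{2l}\la\xi\ra^{k+\delta}u,u\ra$, and observe that the difference of these two forms equals exactly $\la\la\xi\ra^k\big[\la\xi\ra^{-\delta},\la D_\xi\ra^{2l}\big]\la\xi\ra^{k+\delta}u,u\ra$. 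The commutator $\big[\la\xi\ra^{-\delta},\la D_\xi\ra^{2l}\big]$ gains one power of $\la\xi\ra$ and one of $\la D_\xi\ra$ over the product $\la\xi\ra^{-\delta}\la D_\xi\ra^{2l}$ (its leading symbol is a Poisson bracket of size $\la\xi\ra^{-\delta-1}\la x\ra^{2l-1}$, with $x$ dual to $\xi$), so that error is bounded by $C_\delta\,\|f\|_{H^{k-1}_l}\|f\|_{H^k_{l-1}}$. This gives
$$
\|f\|_{H^k_l}^2\le\|f\|_{H^{k-\delta}_{l+\alpha}}\|f\|_{H^{k+\delta}_{l-\alpha}}+C_\delta\,\|f\|_{H^{k-1}_l}\|f\|_{H^k_{l-1}}.
$$
If $\delta\le1$ and $\alpha\le1$ the error term is absorbed immediately through the continuous embeddings $H^{k-1}_l\hookrightarrow H^{k-\delta}_{l+\alpha}$ and $H^k_{l-1}\hookrightarrow H^{k+\delta}_{l-\alpha}$, which settles the lemma in that range; the general case then follows by iterating the displayed inequality (equivalently, composing it with itself, which promotes the gap $(1,1)$ to any $(n,n)$, $n\in\N$) together with these embeddings --- a routine bookkeeping I would not carry out in detail.

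The one genuine obstacle is this commutator estimate, i.e.\ the non-commutativity of the position weight $\la v\ra^l$ and the smoothing $\la D\ra^k$: it is exactly what prevents the lemma from being a one-line Cauchy--Schwarz and what forces the bootstrap. A conceptually cleaner, though less self-contained, alternative would be to deduce the lemma from the stability of the scale $\{H^k_l\}$ under complex interpolation, namely $[H^{k_0}_{l_0},H^{k_1}_{l_1}]_{1/2}=H^{(k_0+k_1)/2}_{(l_0+l_1)/2}$ with equivalent norms (proved by Stein interpolation applied to the analytic family $z\mapsto\la\xi\ra^{z}\la D_\xi\ra^{z'}$), combined with the elementary bound $\|f\|_{[X_0,X_1]_{1/2}}\le\|f\|_{X_0}^{1/2}\|f\|_{X_1}^{1/2}$, which one obtains by testing the interpolation-space norm on the entire function $z\mapsto e^{\lambda(z-1/2)}f$ in the strip and optimising over $\lambda\in\R$.
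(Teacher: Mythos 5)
First, a point of reference: the paper does not prove this lemma at all --- it is quoted from Desvillettes--Wennberg \cite{DesWen} --- so there is no internal proof to compare you against; your attempt has to stand on its own. Your skeleton is reasonable: passing to the Fourier side, the exact identity expressing $\|f\|_{H^k_l}^2$ minus the Cauchy--Schwarz-friendly form $\la \la\xi\ra^{k-\delta}\la D_\xi\ra^{2l}\la\xi\ra^{k+\delta}u,u\ra$ as a commutator term is correct (with a consistent ``weight outside'' convention, and up to a sign), the Cauchy--Schwarz bound on the main form is correct, and the gap-doubling iteration from $\delta,\alpha\le 1$ to general $\delta,\alpha$ is a standard approximate-midpoint-convexity argument (it does need a priori finiteness of $\|f\|_{H^k_l}$ to divide/absorb, so one should argue on Schwartz functions and conclude by density). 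One slip: your two embeddings are stated backwards; what you need, and what does hold when $\delta\le1$ and $\alpha\le1$, is $H^{k-\delta}_{l+\alpha}\hookrightarrow H^{k-1}_{l}$ and $H^{k+\delta}_{l-\alpha}\hookrightarrow H^{k}_{l-1}$, so that the error product is dominated by the main product.

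The genuine gap is exactly where you yourself locate ``the one genuine obstacle'': the bound
$\big|\la \la\xi\ra^{k}[\la\xi\ra^{-\delta},\la D_\xi\ra^{2l}]\la\xi\ra^{k+\delta}u,u\ra\big| \lesssim \|f\|_{H^{k-1}_l}\|f\|_{H^{k}_{l-1}}$
is asserted from a one-line remark about the size of the leading (Poisson-bracket) symbol, but it is never proved, and in this approach it \emph{is} the content of the lemma. To justify it one needs a genuine weighted (SG/scattering) pseudodifferential calculus for symbols $\la x\ra^{m}\la\xi\ra^{\mu}$ with non-integer orders: the full commutator expansion with remainder control (note $\la D_\xi\ra^{2l}$ has a distributional, not integrable, convolution kernel for $2l>0$, so no naive kernel estimate is available), $L^2$-boundedness of order-$(0,0)$ operators, and a further factorisation of the order-$(2l-1,2k-1)$ error form into the two stated norms --- none of which is more elementary than the lemma itself, and the same caveat applies to your alternative route through the identification of the complex interpolation space $[H^{k_0}_{l_0},H^{k_1}_{l_1}]_{1/2}$. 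A second, quantitative shortfall: your commutator symbol carries a factor of order $l$, the convention-equivalence constants depend on $k,l$, and the number of doubling steps grows with $\alpha$, so the argument yields a constant $C(k,l,\delta,\alpha)$ rather than the constant $C_\delta$ claimed in the statement (and provided by \cite{DesWen}, whose proof is designed to be uniform in the weight parameters). For the applications in this paper fixed-parameter constants would suffice, but as written your proposal proves a strictly weaker statement, with its central estimate left unestablished.
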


Now we state a technical lemma that will be useful in the estimates of weighted $L^2$-type norms.

\begin{lem}\label{lem:Kalpha}
Let $0 < \alpha < d$. Consider smooth nonnegative functions $f,g,h : \R^d \to \R$ and define
$$
K_\alpha(f,g,h) := \iint |v-v_*|^{-\alpha} \, f_* \, g \, h \, dv_* \, dv.
$$
Let $\ell_0 \le \alpha$ and  $\ell_1 + \ell_2 = - \ell_0 $, then the following estimates hold:

\begin{enumerate}[(1)]

\item For any $\sigma\ge 0$ such that $2 \sigma <d$ and $2(\alpha - \sigma) <d$ we have
$$
\bal
K_\alpha(f,g,h) 
&\lesssim
\| \la v \ra^{\ell_0} f \|_{L^1} \, 
\| \la v \ra^{\ell_1} g \|_{L^2} \, \| \la v \ra^{\ell_2} h \|_{L^2} 
+ \| \la v \ra^{\ell_0} f \|_{L^1} \, \| \la v \ra^{\ell_1} g \|_{\dot H^{\alpha-\sigma}} \, \| \la v \ra^{\ell_2} h \|_{\dot H^\sigma}.
\eal
$$

\smallskip

\item For any $0 < \sigma < \alpha$ we have
$$
\bal
K_\alpha(f,g,h) 
&\lesssim
\| \la v \ra^{\ell_0} f \|_{L^1} \, \| \la v \ra^{\ell_1} g \|_{L^2} \, \| \la v \ra^{\ell_2} h \|_{L^2} 
+ \| \la v \ra^{\ell_0} f \|_{L^{\frac{d}{d-\alpha+\sigma}}} \, 
\| \la v \ra^{\ell_1} g \|_{H^{\sigma}} \, \| \la v \ra^{\ell_2} h \|_{L^2}.
\eal
$$

\end{enumerate}

\end{lem}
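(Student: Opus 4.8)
The plan is to localise the singularity of the kernel and to dispose of the polynomial weights by elementary inequalities, reducing everything to a weight-free trilinear estimate for $|v-v_*|^{-\alpha}$. Write
$$
|v-v_*|^{-\alpha} = |v-v_*|^{-\alpha}\mathbf{1}_{|v-v_*|\le 1} + |v-v_*|^{-\alpha}\mathbf{1}_{|v-v_*|>1}.
$$
On the far set $\{|v-v_*|>1\}$ the kernel is bounded; writing $g=\la v\ra^{-\ell_1}(\la v\ra^{\ell_1}g)$, $h = \la v\ra^{-\ell_2}(\la v\ra^{\ell_2}h)$ and using $\ell_1+\ell_2=-\ell_0$ the weights on $g,h$ collapse to $\la v\ra^{\ell_0}$, which Peetre's inequality transfers to $v_*$ at the price of $\la v-v_*\ra^{|\ell_0|}$; since $\ell_0\le\alpha$ the factor $\la v-v_*\ra^{|\ell_0|}|v-v_*|^{-\alpha}$ stays bounded on this set, and Cauchy–Schwarz gives a contribution $\lesssim \|\la\cdot\ra^{\ell_0}f\|_{L^1}\|\la\cdot\ra^{\ell_1}g\|_{L^2}\|\la\cdot\ra^{\ell_2}h\|_{L^2}$ — the first, regularity-free, term in both estimates. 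On the near set $\{|v-v_*|\le1\}$ one has $\la v\ra\asymp\la v_*\ra$, so the same manipulation makes the weights disappear (up to a harmless constant), and it remains to bound
$$
K_\alpha^{\mathrm{near}}(F,G,H):=\iint_{|v-v_*|\le1}|v-v_*|^{-\alpha}\,F_*\,G\,H
$$
in terms of the stated norms of $F,G,H\ge0$.

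For part (2) this is a direct Hardy–Littlewood–Sobolev argument. Set $\Phi:=|\cdot|^{-\alpha}\mathbf{1}_{|\cdot|\le1}$ and write $K_\alpha^{\mathrm{near}}(F,G,H)=\int F\,(\Phi*(GH))$. Hölder with conjugate exponents $d/(d-\alpha+\sigma)$ and $d/(\alpha-\sigma)$ gives $K_\alpha^{\mathrm{near}}(F,G,H)\le\|F\|_{L^{d/(d-\alpha+\sigma)}}\|\Phi*(GH)\|_{L^{d/(\alpha-\sigma)}}$; since $\Phi\le|\cdot|^{-\alpha}$ and $0<\alpha<d$, HLS bounds $\|\Phi*(GH)\|_{L^{d/(\alpha-\sigma)}}\lesssim\|GH\|_{L^{d/(d-\sigma)}}$; finally Cauchy–Schwarz $\|GH\|_{L^{d/(d-\sigma)}}\le\|G\|_{L^{2d/(d-2\sigma)}}\|H\|_{L^2}$ together with the Sobolev embedding $\dot H^\sigma\hookrightarrow L^{2d/(d-2\sigma)}$ yields $K_\alpha^{\mathrm{near}}(F,G,H)\lesssim\|F\|_{L^{d/(d-\alpha+\sigma)}}\|G\|_{H^\sigma}\|H\|_{L^2}$; restoring weights gives the second term of (2).

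For part (1) I would use a symmetric splitting of the singularity. Factor $|v-v_*|^{-\alpha}=|v-v_*|^{-(\alpha-\sigma)}|v-v_*|^{-\sigma}$ (both exponents positive) and apply Cauchy–Schwarz in the measure $\mathbf{1}_{|v-v_*|\le1}F_*\,dv_*\,dv$ to decouple $G$ and $H$:
\[
K_\alpha^{\mathrm{near}}(F,G,H)\le\Big(\int|G(v)|^2(\Psi_1*F)(v)\,dv\Big)^{1/2}\Big(\int|H(v)|^2(\Psi_2*F)(v)\,dv\Big)^{1/2},
\]
with $\Psi_1=|\cdot|^{-2(\alpha-\sigma)}\mathbf{1}_{|\cdot|\le1}$ and $\Psi_2=|\cdot|^{-2\sigma}\mathbf{1}_{|\cdot|\le1}$. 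Since $2(\alpha-\sigma)<d$, $\Psi_1$ belongs to the weak Lebesgue space $L^{d/(2(\alpha-\sigma)),\infty}$, so O'Neil's weak-type Young inequality gives $\Psi_1*F\in L^{d/(2(\alpha-\sigma)),\infty}$ with norm $\lesssim\|F\|_{L^1}$; Hölder in Lorentz spaces then yields $\int|G|^2(\Psi_1*F)\lesssim\|F\|_{L^1}\|G\|^2_{L^{2d/(d-2(\alpha-\sigma)),2}}$, and the sharp Sobolev embedding $\dot H^{\alpha-\sigma}(\R^d)\hookrightarrow L^{2d/(d-2(\alpha-\sigma)),2}(\R^d)$ bounds this by $\|F\|_{L^1}\|G\|^2_{\dot H^{\alpha-\sigma}}$. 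The $H$-factor is treated identically with $\sigma$ in place of $\alpha-\sigma$ (using $2\sigma<d$), which after restoring the weights gives the second term of (1).

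The main obstacle is the near-singular part. In case (2) it is a routine HLS computation, but in case (1) the derivative count $\|G\|_{\dot H^{\alpha-\sigma}}\|H\|_{\dot H^\sigma}$ sits exactly at the endpoint of the naive Sobolev estimate (the straightforward argument only closes with a strict surplus of derivatives), and recovering the endpoint forces one to work in Lorentz spaces — weak-type Young convolution combined with the refined Sobolev embedding into $L^{p,2}$; the hypotheses $2\sigma<d$ and $2(\alpha-\sigma)<d$ are precisely what makes those embeddings available. The weight bookkeeping (Peetre on the far set, $\la v\ra\asymp\la v_*\ra$ on the near set) is elementary but must be tracked carefully so that the constraint $\ell_1+\ell_2=-\ell_0$ and the bound on $\ell_0$ are used in the right places.
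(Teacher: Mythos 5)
Your proof is correct, and its skeleton coincides with the paper's: the same splitting at $|v-v_*|=1$, the weights transferred by Peetre's inequality on the far set and by $\la v\ra\asymp\la v_*\ra$ on the near set, the Cauchy--Schwarz factorisation $|v-v_*|^{-\alpha}=|v-v_*|^{-(\alpha-\sigma)}\,|v-v_*|^{-\sigma}$ for part (1), and Hardy--Littlewood--Sobolev plus H\"older and the embedding $H^\sigma\hookrightarrow L^{2d/(d-2\sigma)}$ for part (2). The one genuine difference is the key inner estimate in (1): where you bound $\int G^2(\Psi_1*F)$ via weak-type Young ($L^1*L^{r,\infty}\to L^{r,\infty}$), H\"older in Lorentz spaces and the refined Sobolev embedding $\dot H^{s}\hookrightarrow L^{2d/(d-2s),2}$, the paper simply invokes Pitt's inequality $\int|v-v_*|^{-2s}|u(v)|^2\,dv\lesssim\|u\|_{\dot H^{s}}^2$ (Beckner), valid uniformly in $v_*$ by translation invariance, and then integrates against $F_*\,dv_*$. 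Your Lorentz-space chain is in effect a self-contained proof of that (convolved) Pitt inequality, so it buys independence from the cited reference at the price of heavier machinery; conversely, your remark that the endpoint ``forces'' one into Lorentz spaces is overstated, since Pitt's inequality reaches the endpoint directly. Both your argument and the paper's implicitly take $\alpha-\sigma\ge 0$ and degenerate harmlessly to $L^2$ factors when $\sigma=0$ or $\sigma=\alpha$, so there is no gap relative to the paper on that score.
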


\begin{proof}
Denote $F_* = \la v_* \ra^{\ell_0} |f_*|$, $G = \la v \ra^{\ell_1} |g| $ and $H = \la v \ra^{\ell_2} |h|$ such that $\ell_1+ \ell_2 = -\ell_0$, and split the integral into two parts, $K_1 := \iint \Indiq{|v-v_*|\le 1}$ and $K_2 := \iint \Indiq{|v-v_*| >1}$. 
Then
$$
\bal
K_2
&= \iint \Indiq{|v-v_*| > 1} |v-v_*|^{-\alpha} \la v_* \ra^{-\ell_0} \la v \ra^{\ell_0} \, F_* \, G \, H \, dv_* \, dv  \\
&\lesssim
\iint F_* \, G \, H= \| \la v \ra^{\ell_0} f \|_{L^1} \, \| \la v \ra^{\ell_1} g \|_{L^2} \, \| \la v \ra^{\ell_2} h \|_{L^2},
\eal
$$
where we have used, since $\ell_0 \le \alpha$,
$$
\bal
|v - v_*|^{-\alpha} \Indiq{|v-v_*| > 1} \la v_* \ra^{-\ell_0} \la v \ra^{\ell_0}
\le 2^{\alpha/2} \Indiq{|v-v_*| > 1} \frac{\la v - v_* \ra^{\ell_0}}{|v-v_*|^\alpha}
\le C.
\eal
$$
This gives the first term in the estimates above, both for points (1) and (2).
For the term $K_1$ we split into two cases.

\medskip
\noindent
(1) Using that $\la v_* \ra^{-\ell_0} \la v \ra^{\ell_0} \Indiq{|v-v_*|\le 1} \le C$ we obtain
$$
\bal
K_1 \lesssim \iint |v-v_*|^{-\alpha} F_* \, G \, H = \int_{v_*} F_* \left\{ \int_v |v-v_*|^{-\alpha} \,  G \, H \right\}
\eal
$$
and we need to estimate the integral in $v$. Using Pitt's inequality \cite{Beckner}, for any $\sigma \ge 0$ such that $2\sigma < d$ and $2(\alpha - \sigma) < d$, we get
$$
\bal
\int_v |v-v_*|^{-\alpha} \,  G \, H 
&\le \left(\int |v-v_*|^{-2(\alpha-\sigma)} \,  G^2 \right)^{1/2} 
\left(\int |v-v_*|^{-2\sigma} \,  H^2 \right)^{1/2} \\
&\lesssim \left(\int |\xi|^{2(\alpha-\sigma)} | \hat G |^2 \right)^{1/2}
\left( \int |\xi|^{2\sigma} |\hat H|^2 \right)^{1/2} \\
&\lesssim  \| \la v \ra^{\ell_1} g \|_{\dot H^{\alpha-\sigma}} \, \| \la v \ra^{\ell_2} h \|_{\dot H^\sigma}.
\eal
$$

\medskip
\noindent
(2) Using Hardy-Littlewood-Sobolev inequality, for any $0 < \sigma < \alpha$, we get
$$
\bal
K_1 
\lesssim \iint |v-v_*|^{-\alpha} F_* \, G \, H 
\lesssim \| F \|_{L^{\frac{d}{d- \alpha + \sigma}}} \, \| G H \|_{L^{\frac{d}{d-\sigma}}}.
\eal
$$
Using H\"older's inequality and the Sobolev embedding $H^{\sigma}(\R^d) \hookrightarrow L^{\frac{2d}{d-2\sigma}} (\R^d)$, it follows that
$$
\| G H \|_{L^{\frac{d}{d-\sigma}}} \le \| G \|_{L^{\frac{2d}{d-2\sigma}}} \, \| H \|_{L^2}
\lesssim \| \la v \ra^{\ell_1} g \|_{H^{\sigma}} \, \| \la v \ra^{\ell_2} h \|_{L^2},
$$
which completes the proof. 
\end{proof}

We state next a result from \cite[Proposition 4]{DesVi1} (see also \cite{ALL}) concerning
ellipticity properties of the matrix $a*f$.

\begin{lem}\label{lem:coerc}
Let $\gamma \in [-2,1]$ and $f \in L^1_2 \cap L \log L (\R^d)$. Then there exists $K>0$ depending on $\| f \|_{L^1_2 \cap L \log L}$ such that
$$
(a*f)(v) \geq K \la v \ra^{\gamma} \,  I_d.
$$

\end{lem}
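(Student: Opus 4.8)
The statement is the classical lower ellipticity estimate of Desvillettes and Villani \cite[Proposition 4]{DesVi1}, and I would reprove it along the following lines. Since $a*f$ is symmetric and pointwise nonnegative (being a convolution of the nonnegative matrix $a$ against $f\ge 0$), it suffices to produce the lower bound for the scalar quadratic form $(a*f)(v)\,\xi\cdot\xi$ with $\xi$ an arbitrary unit vector, with a constant independent of $\xi$. After rotating coordinates so that $\xi=e_1$, the identity $a_{ij}(z)\,\xi_i\xi_j=|z|^{\gamma+2}\bigl(1-z_1^2/|z|^2\bigr)=|z|^\gamma\,|z'|^2$, where $z'=(z_2,\dots,z_d)$, gives
$$
(a*f)(v)\,\xi\cdot\xi=\int_{\R^d}|v-v_*|^\gamma\,|v'-v_*'|^2\,f(v_*)\,dv_* .
$$
The integrand is nonnegative, so it is enough to show that a well-chosen set of velocities $v_*$ contributes at least $K\la v\ra^\gamma$.

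The plan is to restrict the integral to the ``good'' region $\RR:=\{v_*:\ |v_*|\le R,\ |v'-v_*'|\ge r\}$, where $R\ge 1$ and $r\in(0,1]$ will be fixed depending only on the mass, energy and entropy of $f$. On $\RR$ one has $|v'-v_*'|^2\ge r^2$ and $r\le|v-v_*|\le(1+R)\la v\ra$, from which $|v-v_*|^\gamma\ge c_0(\gamma,r,R)\,\la v\ra^\gamma$: this is immediate for $\gamma\le 0$, and for $\gamma\in(0,1]$ one uses in addition that $|v-v_*|\ge\la v\ra/2$ whenever $\la v\ra\ge 2(1+R)$. Hence
$$
(a*f)(v)\,\xi\cdot\xi\ \ge\ c_0\,r^2\,\la v\ra^\gamma\int_\RR f(v_*)\,dv_*,
$$
and the whole matter is reduced to showing that $\int_\RR f\ge M_0/2$ for a universal choice of $r,R$, where $M_0=\int f$.

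For this I would write $\int_\RR f\ge M_0-\int_{\{|v_*|>R\}}f-\int_E f$ with $E:=\{|v_*|\le R\}\cap\{|v'-v_*'|<r\}$. The energy bound gives $\int_{\{|v_*|>R\}}f\le E_0/R^2$, which is at most $M_0/4$ once $R\ge(4E_0/M_0)^{1/2}$. The set $E$ is contained in a cylinder of length $2R$ and $(d-1)$-dimensional cross-section $\lesssim r^{d-1}$, so $|E|\lesssim_d r^{d-1}R$; combining the classical bound $\int f(\log f)_+\le C_0$ (with $C_0$ depending only on $M_0,E_0,H_0$, obtained by splitting $\{f<1\}$ at the level $e^{-|v|^2}$) with the elementary estimate $\int_E f\le\rho|E|+(\log\rho)^{-1}\int f(\log f)_+$, valid for every $\rho>1$, one first fixes $\rho$ so that $C_0/\log\rho\le M_0/8$ and then $r$ so small that $\rho|E|\le M_0/8$. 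This yields $\int_E f\le M_0/4$, hence $\int_\RR f\ge M_0/2$, and the lemma follows with $K:=c_0\,r^2M_0/2>0$, which by construction depends only on $\|f\|_{L^1_2\cap L\log L}$. Since $\xi$ was arbitrary and $K$ does not depend on it, $(a*f)(v)\ge K\la v\ra^\gamma I_d$.

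I expect the only genuinely delicate step to be the control of $\int_E f$: one must forbid $f$ from concentrating on the thin slab $E$ around the affine hyperplane $\{v_*'=v'\}$ (and near infinity), uniformly in $v$ and in the direction $\xi$, and this is precisely where the $L\log L$ non-concentration assumption is used, through the quantitative bound $\int_E f\le \rho|E|+(\log\rho)^{-1}C_0$. The remaining parts — the reduction to a scalar quadratic form, the elementary lower bound for $|v-v_*|^\gamma$ on $\RR$, and the energy tail estimate — are routine bookkeeping in the parameters $r,R,\rho$.
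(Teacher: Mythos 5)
Your proof is correct and follows essentially the same route as the paper, which does not reprove the lemma but cites \cite{DesVi1} (Proposition 4) and notes the argument extends to $\gamma\ge -2$: reduction to the scalar quadratic form, restriction to a good region excluding large velocities and a thin slab around the hyperplane $\{v_*'=v'\}$, with the energy bound controlling the tail and the $L\log L$ bound preventing concentration on the small-measure slab. The only point worth making explicit is that your constant $K\propto M_0$ is strictly positive only under a lower bound on the mass, which is implicitly part of what ``depending on $\| f \|_{L^1_2 \cap L\log L}$'' encodes and is harmless here since the solutions considered have normalized mass.
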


The proof of this result is stated in \cite{DesVi1} in the case $\gamma \in (0,1]$, however we easily observe that the result is also valid for $\gamma \ge -2$ by following the proof.

\subsection{Moments estimates}

The moments of solutions to the Landau equation in the case of soft potentials is known to be propagated linearly in time, as is stated in \cite[Section 2.4, p.~73]{Villani-BoltzmannBook}. We give however a proof of this fact for the sake of completeness and because we shall need a precise estimate in order to use it later for the stretched exponential moments in Lemma~\ref{lem:momentsexp}.

\begin{lem}\label{lem:moments}
Let $\gamma \in (-2,0)$, $f_0 \in L^1_2 \cap L \log L$ and consider a weak solution $ f \in L^\infty([0,\infty); L^1_2 \cap L \log L)$ to the Landau equation associated to $f_0$. Suppose further that $f_0 \in L^1_l$ for some $l>2$. Then, at least formally, there exists a constant $C>0$ depending on $\| f \|_{L^\infty([0,\infty); L^1_2)}$ and $\| f_0 \|_{L^1_l}$ (but not on $l$) such that 
$$
\forall\, t \ge 0 \qquad
\| f(t) \|_{L^1_l} \leq C \, \alpha(l)\, (1+t)
$$
with
$$
\alpha(l) :=
\left\{
\bal
&l^2, &\quad l \le 4,\\
&\frac{ l^{2 - \frac{4}{\gamma+2}} } {l-4} \, \left( \frac{l-4}{l+\gamma-2} \right)^{\frac{l-4}{\gamma+2}} \, l^{\frac{l}{\gamma+2}}, &\quad l >4.
\eal
\right.
$$
\end{lem}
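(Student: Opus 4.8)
The plan is to test the equation against $\la v \ra^l = \la v\ra^{2k}$ with $k=l/2$ and use the weak formulation to compute $\frac{d}{dt}\| f\|_{L^1_l}$. Writing $\varphi(v) = \la v\ra^l$, the weak formulation gives
$$
\frac{d}{dt}\int f \la v\ra^l
= \frac12 \iint a_{ij}(v-v_*)\big(\partial_{ij}\varphi + \partial_{ij}\varphi_*\big) f_* f
+ \iint b_i(v-v_*)\big(\partial_i\varphi - \partial_i\varphi_*\big) f_* f .
$$
First I would compute $\partial_i\varphi = l\,v_i\la v\ra^{l-2}$ and $\partial_{ij}\varphi = l\la v\ra^{l-2}\delta_{ij} + l(l-2)v_iv_j\la v\ra^{l-4}$, plug in $a_{ij}(z)=|z|^{\gamma+2}\Pi_{ij}(z)$ and $b_i(z) = -2|z|^\gamma z_i$, and expand. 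The key structural fact, classical for soft potentials, is that when one symmetrizes in $v\leftrightarrow v_*$ the dangerous terms coming from $\partial_{ij}\varphi$ at order $\la v\ra^{l}$ carry a \emph{favourable} sign — this is the standard moment-production mechanism — so that after using Lemma~\ref{lem:bar-aij}-type identities (here directly on $a_{ij}(v-v_*)$ rather than its convolution) one is left with an estimate of the form
$$
\frac{d}{dt}\int f\la v\ra^l
\le -c_1\, l \iint |v-v_*|^{\gamma+2}\la v\ra^{l-2}\la v_*\ra^{-?} f_* f + c_2\, l^2 \iint |v-v_*|^{\gamma} \big(\la v\ra^{l-2}\la v_*\ra^{2} + \la v\ra^{l-1}\la v_*\ra\big) f_* f.
$$
Since $\gamma\in(-2,0)$, one has $|v-v_*|^\gamma\le C$ when $|v-v_*|\ge 1$ and, for $|v-v_*|\le 1$, $|v-v_*|^\gamma$ integrated against a Gaussian-free measure still behaves like $J_\gamma$-type bounds; more simply, since we only need the $L^1$ bound and $f\in L^\infty_t L^1_2$, one bounds $|v-v_*|^{\gamma}\lesssim \la v\ra^{|\gamma|}\la v_*\ra^{|\gamma|}$ crudely, and $|v-v_*|^{\gamma+2}\le \la v\ra^{\gamma+2}\la v_*\ra^{\gamma+2}$... — actually the cleanest route is: bound all the ``loss'' contributions by $C l^2 \|f\|_{L^1_2}\,\|f\|_{L^1_{l-2+?}}$ and control the top moment $\|f\|_{L^1_l}$ by interpolating between $\|f\|_{L^1_2}$ and a slightly higher moment, absorbing into the good term.

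The decisive step — and the reason the precise constant $\alpha(l)$ appears — is the interpolation. Set $y(t) := \|f(t)\|_{L^1_l}$ and $M := \sup_{t\ge0}\|f(t)\|_{L^1_2}$. After discarding the good (negative) term when $l\le 4$, one simply has $\dot y \le C l^2 M^2$, giving $y(t)\le y_0 + Cl^2M^2 t \le C\,l^2(1+t)$, which is the first branch of $\alpha(l)$. For $l>4$ one keeps the good term: it controls, via Hölder with a Gaussian-type weight or via the elementary inequality
$$
\iint |v-v_*|^{\gamma+2}\la v\ra^{l-2} f_* f \;\gtrsim\; \|f\|_{L^1_{l+\gamma}} \;\gtrsim\; \text{(suitable power of)}\ \|f\|_{L^1_l},
$$
more precisely one uses the reverse-Hölder/interpolation $\|f\|_{L^1_l} \le \|f\|_{L^1_2}^{\theta}\,\|f\|_{L^1_{l+\gamma}}^{1-\theta}$ with $\theta = \frac{l+\gamma - l}{l+\gamma-2} = \frac{\gamma}{l+\gamma-2}$... i.e. $1-\theta = \frac{l-2}{l+\gamma-2}$, hence $\|f\|_{L^1_{l+\gamma}} \ge \big(\|f\|_{L^1_l}/M^{\theta}\big)^{1/(1-\theta)}$. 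Substituting, the differential inequality becomes
$$
\dot y \le -c_1\, l\, M^{-\frac{\gamma}{l-2}}\, y^{\,1 + \frac{|\gamma|}{l-2}} \;+\; C_2\, l^2\, M^2 ,
$$
of the form $\dot y \le -A y^{1+\beta} + B$ with $\beta = |\gamma|/(l-2)$, $A \asymp l\,M^{-\gamma/(l-2)}$, $B\asymp l^2M^2$. Any solution of such an inequality satisfies $y(t)\le \max\{y_0,(B/A)^{1/(1+\beta)}\}$ for all $t$ (the right side being a barrier/supersolution), and tracking the constants $(B/A)^{1/(1+\beta)} = (C l M^{2+\gamma/(l-2)})^{(l-2)/(l-2+|\gamma|)}$, together with the explicit optimization of the interpolation exponent — which is where the factors $\big(\tfrac{l-4}{l+\gamma-2}\big)^{(l-4)/(\gamma+2)}$ and $l^{l/(\gamma+2)}$ come from — yields exactly $y(t) \le C\,\alpha(l)$ uniformly in $t$, hence in particular $y(t)\le C\alpha(l)(1+t)$. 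I would then remark that the computation is ``formal'' in the sense that it requires enough moments and regularity to justify the integrations by parts in the weak formulation, which is standard and why the statement says ``at least formally''.

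\textbf{Main obstacle.} The routine part is the expansion of the collisional integral; the delicate part is twofold: (a) correctly identifying and exploiting the sign of the top-order symmetrized term so that one genuinely gains a coercive contribution $\sim -l\,\|f\|_{L^1_{l+\gamma}}$ rather than merely a sign-indefinite $O(l^2)$ term — for $\gamma<0$ this coercivity is weaker than in the hard-potential case and only suffices to bound (not produce uniformly all) high moments; and (b) bookkeeping the $l$-dependence of \emph{every} constant — the power $l^2$ from two derivatives hitting $\varphi$, the interpolation exponent $(l-2)/(l-2+|\gamma|)$, and the optimized constant in Hölder — to arrive at the stated $\alpha(l)$ with a constant $C$ independent of $l$. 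That careful constant-tracking through the barrier argument $\dot y\le -Ay^{1+\beta}+B \Rightarrow y\le \max\{y_0,(B/A)^{1/(1+\beta)}\}$ is the real content of the lemma, since the mere linear-in-time propagation is classical.
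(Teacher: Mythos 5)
There are two genuine gaps in your proposal, both at the points you yourself flag as delicate.

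First, the treatment of the singular region $|v-v_*|\le 1$ does not work as written: the bound $|v-v_*|^{\gamma}\lesssim \la v\ra^{|\gamma|}\la v_*\ra^{|\gamma|}$ is false for $\gamma<0$ (the left side blows up as $v_*\to v$), and since $f$ is only known to lie in $L^1_2\cap L\log L$, a factor $|v-v_*|^{\gamma}$ on $\{|v-v_*|\le 1\}$ cannot be controlled by weighted $L^1$ norms alone. The paper's proof removes the singularity structurally: after splitting at $|v-v_*|=1$, the part of the singular region carrying the weight $-2l+2l\la v\ra^{-2}\la v_*\ra^2$ is shown to be \emph{nonpositive} by a symmetrization/H\"older argument (the term $T_{21}$), and the remaining part uses $|v|^2|v_*|^2-(v\cdot v_*)^2\le |v|\,|v_*|\,|v-v_*|^2$ to convert $|v-v_*|^{\gamma}$ into $|v-v_*|^{\gamma+2}\Indiq{|v-v_*|\le 1}\le 1$. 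Some such device is indispensable for $\gamma\le -1$, and your proposal supplies none.

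Second, and more seriously, the interpolation at the heart of your $l>4$ argument is invalid. You interpolate $\|f\|_{L^1_l}$ between $\|f\|_{L^1_2}$ and $\|f\|_{L^1_{l+\gamma}}$, but since $\gamma<0$ we have $l+\gamma<l$, so $l$ does not lie between $2$ and $l+\gamma$; your own computation gives $\theta=\gamma/(l+\gamma-2)<0$, i.e.\ this is not a H\"older inequality at all. The resulting ``reverse'' bound $\|f\|_{L^1_{l+\gamma}}\gtrsim \|f\|_{L^1_l}^{(l+\gamma-2)/(l-2)}\,M^{|\gamma|/(l-2)}$ is false in general: take $f$ equal to a unit bump near the origin plus mass $R^{-l-\gamma}$ at $|v|=R$; then $\|f\|_{L^1_2}$ and $\|f\|_{L^1_{l+\gamma}}$ stay bounded while $\|f\|_{L^1_l}\sim R^{|\gamma|}\to\infty$. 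Consequently the barrier inequality $\dot y\le -Ay^{1+\beta}+B$ is not available, and the uniform-in-time bound you deduce from it cannot be obtained here — indeed it would be stronger than the lemma itself, and the paper only reaches uniform moment bounds much later (Proposition~\ref{prop:momentuniform}), as a consequence of the convergence to equilibrium. The correct use of the coercive term is the paper's: interpolate the \emph{loss} moment, $\|f\|_{L^1_{l-2}}\le \|f\|_{L^1_2}^{1/r'}\|f\|_{L^1_{l+\gamma}}^{1/r}$, which is legitimate because $\gamma>-2$ gives $l-2<l+\gamma$ (so $l-2$ lies between $2$ and $l+\gamma$), then absorb the $\|f\|_{L^1_{l+\gamma}}$ factor into $-Kl\|f\|_{L^1_{l+\gamma}}$ by Young's inequality with $\eta\sim r/l$. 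This leaves $\frac{d}{dt}\|f\|_{L^1_l}\le C\alpha(l)$, whence the linear-in-time growth; the explicit form of $\alpha(l)$ comes from the factor $\eta^{-(l-4)/(\gamma+2)}$ with this choice of $\eta$, not from any optimization inside a barrier argument. Your $l\le 4$ branch, by contrast, matches the paper (drop the good term, use $\|f\|_{L^1_{l-2}}\le\|f\|_{L^1_2}$), modulo the singularity issue above.
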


\begin{proof}
The equation for the moments is 
$$
\frac{d}{d t} \| f \|_{L^1_{l}} =  \iint  |v-v_*|^{\gamma} \left\{ - 2l + 2l \la v \ra^{-2}\la v_* \ra^2 + l(l-2) \la v \ra^{-4}[|v|^2 |v_*|^2 - (v\cdot v_*)^2]  \right\} f_* \, f \, \la v \ra^l.
$$
Because of the singularity of $|v-v_*|^{\gamma}$, we split it into two parts $|v-v_*|^{\gamma} \Indiq{|v-v_*|\geq 1}\,$ and $|v-v_*|^{\gamma} \Indiq{|v-v_*|\leq 1}\,$, denoting respectively $T_1$ and $T_2$ each associated term.
Using that $|v|^2 |v_*|^2 - (v\cdot v_*)^2 \leq \la v \ra^2 \la v_* \ra^2$, we obtain for $T_1$ that
$$
\bal
T_1 &\leq -2l \iint  |v-v_*|^{\gamma} \Indiq{|v-v_*|\geq 1} \, f_* \, f  \, \la v \ra^{l} \\
&\quad
+ l^2 \iint  |v-v_*|^{\gamma} \Indiq{|v-v_*|\geq 1}\, \la v \ra^{l-2}\la v_* \ra^2 f_* \, f ,
\eal
$$
from which we get
\beqn\label{eq:T1}
T_1 \leq - K l  \| f \|_{L^1_{l+\gamma}} + C  l^2 \| f \|_{L^1_{l-2}},
\eeqn
for constants $K,C>0$, using the conservation of mass and energy.

For the term $T_2$, we write
$$
\bal
T_2 &=  
l \iint |v-v_*|^{\gamma}\Indiq{|v-v_*|\leq 1}\, \la v \ra^{l-2} \left\{ - 2 \la v \ra^2 + 2 \la v_* \ra^2   \right\} f_* \, f \\
& + l(l-2) \iint |v-v_*|^{\gamma}\Indiq{|v-v_*|\leq 1}\, \la v \ra^{l-4} 
\left\{ |v|^2 |v_*|^2 - (v\cdot v_*)^2 \right\}  f_* \, f=: T_{21} + T_{22}.
\eal
$$
Using H\"older's inequality
$$
\bal
&\iint f f_* |v-v_*|^{\gamma}\Indiq{|v-v_*|\leq 1}\, \la v \ra^{l-2} \la v_* \ra^2 \\
&\qquad 
\leq  \left( \iint f f_* |v-v_*|^{\gamma}\Indiq{|v-v_*|\leq 1}\, \la v \ra^{l} \right)^{(l-2)/l}
\left( \iint f f_* |v-v_*|^{\gamma} \Indiq{|v-v_*|\leq 1}\, \la v_* \ra^{l}\right)^{2/l}\\
&\qquad
=\iint f f_* |v-v_*|^{\gamma}\Indiq{|v-v_*|\leq 1}\, \la v \ra^{l}
\eal
$$
and this implies $T_{21} \leq 0$. Moreover, using the inequality $|v|^2 |v_*|^2 - (v\cdot v_*)^2 \leq |v| |v_*| |v-v_*|^2$, we obtain
$$
\bal
T_{22} &\leq C l^2 \iint f f_* |v-v_*|^{\gamma+2} \Indiq{|v-v_*|\leq 1}\, \la v \ra^{l-3} \la v_* \ra \\
& \leq C l^2 \| f \|_{L^1_{1}} \, \| f \|_{L^1_{l-3}} \leq C  l^2 \| f \|_{L^1_{l-2}},
\eal
$$ 
where we have used $|v-v_*|^{\gamma+2} \Indiq{|v-v_*|\leq 1}\, \leq 1$ and $\| f \|_{L^1_{1}}$ uniformly bounded.
Gathering $T_1$ and $T_2$, it follows that
$$
\frac{d}{dt} \| f \|_{L^1_{l}} \leq - K  l \| f \|_{L^1_{l+\gamma}} + C l^2 \| f \|_{L^1_{l-2}}.
$$

If $l\le 4$ then $\| f \|_{L^1_{l-2}}$ is uniformly bounded and we easily conclude.

\smallskip

Consider then $l >4$.
Since $\gamma>-2$, denoting $r = (l+\gamma-2)/(l-4) >1$ and $r'=r/(r-1) = (l+\gamma -2)/(\gamma+2)$, it follows by H\"older and Young's inequality that
$$
\bal
\| f \|_{L^1_{l-2}}  
\leq \| f \|_{L^1_2}^{1/r'} \, \| f \|_{L^1_{l+\gamma}}^{1/r} 
\leq  \frac{1}{r'} \, \eta^{-\frac{l-4}{\gamma+2}}  \| f \|_{L^1_2} + \frac{\eta}{r} \| f \|_{L^1_{l+\gamma}},
\eal
$$
for all $\eta >0$. We obtain
$$
\frac{d}{d t} \| f \|_{L^1_{l}}
\le - K  l \| f \|_{L^1_{l+\gamma}} + C l^2 \frac{\eta}{r} \| f \|_{L^1_{l+\gamma}}
+ C \frac{l^2}{r'} \, \eta^{-\frac{l-4}{\gamma+2}}  \| f \|_{L^1_2} 
\le C l^{2 - \frac{4}{\gamma+2}}  \, l^{\frac{l}{\gamma+2}} \, \frac{r^{-\frac{l-4}{\gamma+2}}}{r'} \, \| f \|_{L^1_2},
$$
choosing $\eta = Kr/(Cl)$, from which we conclude to
$$
\| f(t) \|_{L^1_{l}} \leq C\, \alpha(l) \, (1+t).
$$
\end{proof}

As a consequence of the above result, we deduce a similar linearly growing estimate for some stretched exponential moments.

\begin{lem}\label{lem:momentsexp}
Let $\gamma \in (-2,0)$, $f_0 \in L^1_2 \cap L \log L$ and consider a weak solution $ f \in L^\infty([0,\infty); L^1_2 \cap L \log L)$ to the Landau equation associated to $f_0$. 
Suppose further that $f_0 \in L^1(e^{\kappa \la v \ra^s})$ with $\kappa >0$ and $0<s < 2+\gamma$.
Then, at least formally, there exists a constant $C>0$ depending on $\| f \|_{L^\infty([0,\infty); L^1_2)}$, $\| f_0 \|_{L^1(e^{\kappa \la v \ra^s})}$, $\kappa$ and 
$s$ such that 
$$
\forall\, t \ge 0, \qquad
\| f(t) \|_{L^1(e^{\kappa \la v \ra^s})} \leq C (1+t).
$$

\end{lem}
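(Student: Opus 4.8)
The plan is to deduce this stretched-exponential moment bound from the family of polynomial moment bounds already established in Lemma~\ref{lem:moments}, by expanding the weight in a power series. Since $e^{\kappa\la v\ra^s}=\sum_{k\ge 0}\frac{\kappa^k}{k!}\la v\ra^{sk}$, Tonelli's theorem gives
$$
\|f(t)\|_{L^1(e^{\kappa\la v\ra^s})}=\sum_{k\ge 0}\frac{\kappa^k}{k!}\,\|f(t)\|_{L^1_{sk}},
$$
so the statement will follow once each term is controlled and the resulting series is shown to converge. First I would read off from the proof of Lemma~\ref{lem:moments} the slightly more precise bound $\|f(t)\|_{L^1_l}\le\big(\|f_0\|_{L^1_l}+C_\ast\,\alpha(l)\big)(1+t)$, where $C_\ast$ depends only on $\gamma$ and on $\sup_t\|f\|_{L^1_2}$ and, crucially, not on $l$ (this is exactly what the displayed estimates there produce, once $\|f\|_{L^1_2}$ is bounded and the contribution of $\|f_0\|_{L^1_l}$ is kept separate). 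Inserting $l=sk$ and summing over $k$ then yields
$$
\|f(t)\|_{L^1(e^{\kappa\la v\ra^s})}\le(1+t)\Big(\|f_0\|_{L^1(e^{\kappa\la v\ra^s})}+C_\ast\sum_{k\ge 0}\frac{\kappa^k}{k!}\,\alpha(sk)\Big),
$$
and since the first summand in parentheses is finite by hypothesis, everything reduces to the convergence of the numerical series $\sum_k\frac{\kappa^k}{k!}\alpha(sk)$.

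The heart of the matter, and the place where the hypothesis $s<2+\gamma$ is genuinely used, is this convergence; only the tail $k\gg1$ (where $l=sk>4$) is at issue. From the explicit formula for $\alpha$, the factor $\frac{l^{2-4/(\gamma+2)}}{l-4}$ is polynomially bounded in $l$ and $\big(\tfrac{l-4}{l+\gamma-2}\big)^{(l-4)/(\gamma+2)}$ stays bounded (it tends to $e^{-1}$), so $\alpha(l)\lesssim l^{A}\,l^{l/(\gamma+2)}$ for $l\ge4$ with $A=A(\gamma)$. Setting $\beta:=s/(\gamma+2)$ and using Stirling in the form $k!\ge(k/e)^k$, one obtains $\frac{\kappa^k}{k!}\alpha(sk)\lesssim(sk)^{A}\,(\kappa e s^{\beta})^k\,k^{(\beta-1)k}$; since the condition $s<2+\gamma$ is precisely $\beta<1$, the factor $k^{(\beta-1)k}=e^{-(1-\beta)k\log k}$ decays faster than any exponential in $k$, so the general term tends to $0$ super-exponentially and the series converges (e.g.\ by the root test). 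Combining the three steps gives $\|f(t)\|_{L^1(e^{\kappa\la v\ra^s})}\le C(1+t)$.

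I expect the asymptotic comparison of $\alpha(sk)$ against $k!$ in the last step to be the only real obstacle; everything else is bookkeeping or a direct appeal to Lemma~\ref{lem:moments}. As with that lemma the computation is formal and would be justified by the usual regularisation/approximation of weak solutions. A more self-contained alternative would be to redo the moment computation directly with the weight $m=e^{\kappa\la v\ra^s}$, bounding the gain term (of order $\la v\ra^{\gamma+s}|f|\,m$) against the loss coming from $\Theta_{m,p}$ in \eqref{eq:dtLpm}; there too the sign of the leading order forces $s<2+\gamma$, so the restriction on $s$ is unavoidable.
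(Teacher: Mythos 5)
Your proposal is correct and follows essentially the same route as the paper: expand $e^{\kappa\la v\ra^s}$ as a power series, apply the linear-in-time bound of Lemma~\ref{lem:moments} with $l=sj$ while keeping the $\|f_0\|_{L^1_{sj}}$ contribution separate (which resums to $\|f_0\|_{L^1(e^{\kappa\la v\ra^s})}$), and check that $\sum_j \frac{\kappa^j}{j!}\alpha(sj)$ converges because $s<2+\gamma$ makes the factor $j^{(s/(\gamma+2))j}/j!$ decay super-exponentially. Your Stirling/root-test justification of the last step simply spells out what the paper asserts directly.
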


\begin{proof}
Write
$$
e^{\kappa \la v \ra^s} = \sum_{j=0}^\infty \kappa^j \frac{\la v \ra^{js}}{j!}
$$
and then, using Lemma~\ref{lem:moments}, we have
$$
\bal
\| f(t) \|_{L^1(e^{\kappa \la v \ra^s})} 
&= \sum_{j=0}^\infty \frac{\kappa^j}{j!} \int f(t) \la v \ra^{js} \\
&\leq \sum_{j=0}^\infty \frac{\kappa^j}{j!} \left\{ C \, \alpha(sj) \, t +        \int f_0 \la v \ra^{js}  \right\} = C t \sum_{j=0}^\infty \frac{\kappa^j}{j!} \, \alpha(sj) + \| f_0 \|_{L^1(e^{\kappa \la v \ra^s})},
\eal
$$
and we only need to prove that the sum is finite. Let $j_0 \in \N$ such that $s j_0 \le 4 < s(j_0+1)$. Then we have
$$
\sum_{j=j_0+1}^\infty \frac{\kappa^j}{j!} \, \alpha(sj)
= \sum_{j=j_0+1}^\infty \kappa^j \, \frac{  (sj)^{2 - \frac{4}{\gamma+2}}} {sj-4} \, \left( \frac{sj-4}{sj+\gamma-2} \right)^{\frac{sj-4}{\gamma+2}} \, s^{\frac{sj}{\gamma+2}} \, \frac{ j^{( \frac{s}{\gamma+2}) j}}{ j! },
$$
which is finite if $s < \gamma+2$.
\end{proof}

\subsection{Regularity estimates}

We shall establish coercivity estimates for the Landau operator $Q$, which are inspired by some similar estimates obtained by Wu~\cite{Wu} and Alexandre, Lao and Lin~\cite{ALL}.

\begin{lem}\label{lem:Qpoly}
Let $\gamma \in (-2,0)$. Then for smooth functions $f$ and $g$, there are constants $K, C >0$ depending on $\| f \|_{L^1_2 \cap L\log L}$ such that: 

\begin{enumerate}[(i)]

\item If $0 \le k \le (\gamma+3)/2$ then
$$
\left\la Q(f,g) , g   \la v \ra^{2k} \right\ra 
\le - K \|  g   \|_{\dot H^1_{k + \gamma/2}}^2
+ C  \|  g \|_{L^2_{k + \gamma/2}}^2.
$$

\item If $k > (\gamma+3)/2$ then
$$
\bal
\left\la Q(f,g) , g  \la v \ra^{2k} \right\ra 
&\le - K\|  g   \|_{\dot H^1_{k + \gamma/2}}^2 
- K' \|  g \|_{L^2_{k+\gamma/2}}^2 
+ C  \|  g \|_{L^2_{k-1}}^2.
\eal
$$

\end{enumerate}

\end{lem}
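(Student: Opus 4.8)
The plan is to start from the weighted $L^2$ identity obtained by taking $p=2$ and $m=\la v\ra^k$ in \eqref{eq:dtLpm}. Inserting the explicit form \eqref{eq:Theta} of $\Theta_{m,2}$ and the formula $c(z)=-2(\gamma+3)|z|^\gamma$ yields
$$
\la Q(f,g),\, g\,\la v\ra^{2k}\ra = -\int (a*f)\,\nabla g\cdot\nabla g\,\la v\ra^{2k} + \sum_{j=1}^{4}\int\!\!\int \omega_j(v,v_*)\,|v-v_*|^\gamma\, f_*\, g^2\,\la v\ra^{2k}\,dv_*\,dv,
$$
with $\omega_1=-2k$, $\omega_2=\gamma+3$, $\omega_3=2k\,\la v\ra^{-2}\la v_*\ra^2$ and $\omega_4=2k(k-1)\,\la v\ra^{-4}\big(|v|^2|v_*|^2-(v\cdot v_*)^2\big)$. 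For the elliptic term I would use Lemma~\ref{lem:coerc} — this is where the hypothesis $f\in L^1_2\cap L\log L$ enters: since $(a*f)(v)\ge K\la v\ra^\gamma I_d$, one gets $-\int(a*f)\nabla g\cdot\nabla g\,\la v\ra^{2k}\le -K\|g\|_{\dot H^1_{k+\gamma/2}}^2$, the main dissipative term. Everything else must be bounded by a small multiple of it plus the tolerated lower-order norm.

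The crucial point is to treat $\omega_1$ and $\omega_2$ \emph{together}: $\omega_1+\omega_2=\gamma+3-2k$ changes sign exactly at $k=(\gamma+3)/2$. An elementary lower bound — using $|v-v_*|\le 2\la v\ra\la v_*\ra$ and the fact that the energy bound keeps the mass of $f$ from escaping to infinity — gives $\int|v-v_*|^\gamma f_*\,dv_*\ge c_0\la v\ra^\gamma$ with $c_0=c_0(\|f\|_{L^1_2})>0$. In case $(ii)$, where $\gamma+3-2k<0$, multiplying this inequality by that negative coefficient gives directly
$$
(\gamma+3-2k)\int\!\!\int|v-v_*|^\gamma f_*\,g^2\,\la v\ra^{2k}\le -(2k-\gamma-3)\,c_0\,\|g\|_{L^2_{k+\gamma/2}}^2,
$$
which is the term $-K'\|g\|_{L^2_{k+\gamma/2}}^2$ with $K'=(2k-\gamma-3)c_0$. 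In case $(i)$ the coefficient $\gamma+3-2k\in[0,\gamma+3]$ is bounded and I would upper-bound this term: split $|v-v_*|^\gamma=|v-v_*|^\gamma(\Indiq{|v-v_*|\le1}+\Indiq{|v-v_*|>1})$; for the outer part $\int_{|v-v_*|>1}|v-v_*|^\gamma f_*\,dv_*\lesssim\la v\ra^\gamma$ (distinguishing the regions $\la v_*\ra\le\la v\ra/2$ and $\la v_*\ra>\la v\ra/2$ and using mass/energy); for the inner part write $g^2\la v\ra^{2k}=\psi^2\la v\ra^{-\gamma}$ with $\psi:=g\la v\ra^{k+\gamma/2}$, use $\la v\ra^{-\gamma}\lesssim\la v_*\ra^{-\gamma}$ on $\{|v-v_*|\le1\}$, apply Lemma~\ref{lem:Kalpha}(1) with $\alpha=|\gamma|$ and $\sigma=|\gamma|/2$ (using $\|f\la\cdot\ra^{|\gamma|}\|_{L^1}=\|f\|_{L^1_{|\gamma|}}\le\|f\|_{L^1_2}$ since $|\gamma|<2$), and finish with Lemma~\ref{lem:ineg} via $\|\psi\|_{\dot H^{|\gamma|/2}}^2\le\eps\|\psi\|_{\dot H^1}^2+C_\eps\|\psi\|_{L^2}^2$ together with $\|\psi\|_{\dot H^1}^2\lesssim\|g\|_{\dot H^1_{k+\gamma/2}}^2+\|g\|_{L^2_{k+\gamma/2-1}}^2$. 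Since $k$ is bounded in case $(i)$, $\eps$ may be fixed, and this costs at most $\tfrac{K}{10}\|g\|_{\dot H^1_{k+\gamma/2}}^2+C\|g\|_{L^2_{k+\gamma/2}}^2$.

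For $\omega_3$ and $\omega_4$ the idea is to route the $\la v_*\ra^2$-type factors onto $f$ rather than onto $g$ — legitimate since $f\in L^1_2$ — so they cost only the lower weight $\la v\ra^{k-1}$. For $\omega_3$ I write $2k\int\!\!\int|v-v_*|^\gamma(f_*\la v_*\ra^2)(g\la v\ra^{k-1})^2$ and apply Lemma~\ref{lem:Kalpha}(1) with $\sigma=|\gamma|/2$, then Lemma~\ref{lem:ineg} and the elementary inclusion $\|g\|_{\dot H^1_{k-1}}\le\|g\|_{\dot H^1_{k+\gamma/2}}$ (valid since $\gamma>-2$), getting $\tfrac{K}{10}\|g\|_{\dot H^1_{k+\gamma/2}}^2+C\|g\|_{L^2_{k-1}}^2$ (with $C$ allowed to depend on $k$). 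For $\omega_4$ I use $|v|^2|v_*|^2-(v\cdot v_*)^2\le|v|^2\min\{|v_*|^2,|v-v_*|^2\}$: the $|v-v_*|^2$ bound makes $|v-v_*|^{\gamma+2}\le1$ on $\{|v-v_*|\le1\}$, removing the singularity there, and on $\{|v-v_*|>1\}$ one argues as for $\omega_3$; this piece is entirely lower-order, $\le C\|g\|_{L^2_{k-1}}^2$. Collecting all contributions and absorbing the $\dot H^1$ fractions into $-K\|g\|_{\dot H^1_{k+\gamma/2}}^2$ gives $(i)$ — where $\|g\|_{L^2_{k-1}}\le\|g\|_{L^2_{k+\gamma/2}}$ is used, again because $\gamma>-2$ — and $(ii)$.

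I expect the main obstacle to be the regime $k>(\gamma+3)/2$ of case $(ii)$: there $k$ is unbounded, and the near-singular contributions carry powers of $k$ that would swamp any negative term extracted from $\omega_1=-2k$ alone (the interpolation constant $C_\eps$ blows up once $\eps$ must be taken of size $\sim 1/k$). This is resolved precisely by combining the $\Theta$-kernel $\omega_1$ with the $c$-kernel $\omega_2$ \emph{before} estimating — producing the clean coefficient $\gamma+3-2k$ — which is also why the threshold in the statement is exactly $(\gamma+3)/2$; the complementary bookkeeping that sends the $\la v_*\ra^2$-weights onto $f$ is what ensures this combined term is the only one feeding $\|g\|_{L^2_{k+\gamma/2}}^2$.
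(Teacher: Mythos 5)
Your proposal is correct and follows essentially the same route as the paper's proof: the same weighted $L^2$ identity \eqref{eq:dtLpm}--\eqref{eq:Theta} with $m=\la v\ra^k$, Lemma~\ref{lem:coerc} for the coercive term, the combined coefficient $\gamma+3-2k$ (lower-bounded via $\int |v-v_*|^\gamma f_*\,dv_*\gtrsim \la v\ra^\gamma$ when negative, upper-bounded via Lemma~\ref{lem:Kalpha}(1) and Lemma~\ref{lem:ineg} when nonnegative), and the remaining $2k$ and $2k(k-1)$ terms routed onto $f$'s $L^1_2$ norm and absorbed as lower-order quantities. The only deviation is cosmetic: you bound the $2k(k-1)$ term with $|v|^2|v_*|^2-(v\cdot v_*)^2\le |v|^2\min\{|v_*|^2,|v-v_*|^2\}$ instead of $\le\la v\ra^2\la v_*\ra^2$ followed by the fractional-Sobolev step, which changes nothing essential.
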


\begin{proof}
From \eqref{eq:dtLpm} and \eqref{eq:Theta} for $m = \la v \ra^k$, we obtain
\beqn
\bal
\left\la Q(f,g) , g  \la v \ra^{2k} \right\ra 
&= - \int_v ( a*f) \nabla g \nabla g \, \la v \ra^{2k} \\
&\quad
+( \gamma+3 -2k) \int_v \int_{v_*} |v-v_*|^\gamma  \, f_* \, g^2 \, \la v \ra^{2k} \\
&\quad 
+2k \int_v \int_{v_*} |v-v_*|^\gamma \la v \ra^{-2} \la v_* \ra^{2}  \, f_* \, g^2 \, \la v \ra^{2k} \\
&\quad
+2k(k-1) \int_v \int_{v_*} |v-v_*|^\gamma \la v \ra^{-4}[|v|^2 |v_*|^2 - (v \cdot v_*)^2]  \, f_* \, g^2 \, \la v \ra^{2k} \\
& =: I_1 + I_2 + I_3+I_4.
\eal
\eeqn

\medskip

For the first term $I_1$, we use the coercivity property of $\bar a$,
since $f\in L^1_2 \cap L\log L$, we have from Lemma~\ref{lem:coerc} that
$$
\bar a (v) = (a * f)(v) \ge K \la v \ra^{\gamma} I_3 .
$$
Then we get
$$
\bal
I_1 
&\le - K \| \la v \ra^{\gamma/2+k} \, \nabla g \|_{L^2}^2 = - K \| g \|_{\dot H^1_{k + \gamma/2}}^2,
\eal
$$
which can also be written as
$$
I_1 
\le - K \| g \|_{\dot H^1_{k + \gamma/2}}^2 
\le  - K \| \la v \ra^{\gamma/2+k}  g \|_{\dot H^1}^2 + C \| g \|_{L^2_{k+\gamma/2-1}}^2 .
$$

For the second term $I_2$, we split into two cases.
If $k \le (\gamma+3)/2$ we have, from Lemma \ref{lem:Kalpha} and the interpolation inequality from Lemma~\ref{lem:ineg}, that
$$
\bal
|I_2|
&\lesssim  \iint |v-v_*|^{\gamma} \, f_* \, g^2 \, \la v \ra^{2k} \\
&\lesssim  \| \la v \ra^{-\gamma} f \|_{L^1}  \big\{C_\epsilon \| \la v \ra^{\gamma/2 + k} g\|_{L^2}^2 + \epsilon \| \la v \ra^{\gamma/2+k} g \|_{\dot H^1}^2 \big\},
\eal
$$
for any $\epsilon>0$.
However, if $ k > (\gamma+3)/2$, we get
$$
I_2 \le - K' \int_v \int_{v_*} |v-v_*|^\gamma f_* \, g^2 \, \la v \ra^{2k}
\le - K \| \la v \ra^{\gamma/2+k} g \|_{L^2}^2.
$$

Finally, using that $|v|^2 |v_*|^2 - (v \cdot v_*)^2 \le \la v\ra^2 \la v_*\ra^2$ we easily get
$$
I_3 + I_4 \lesssim  \int_v \int_{v_*} |v-v_*|^\gamma \la v \ra^{-2} \la v_* \ra^{2}\, f_* \, g^2 \, \la v \ra^{2k}.
$$
Then, arguing as in the proof of Lemma \ref{lem:Kalpha} (term $K_1$ in that lemma) and using again Lemma~\ref{lem:ineg}, it follows that
$$
\bal
I_3 + I_4
&\lesssim  \| \la v \ra^2 f \|_{L^1} \, \| \la v \ra^{k-1} g \|_{\dot H^{-\gamma/2}}^2 \\
&\lesssim  C_\epsilon \| \la v \ra^{k-1} g \|_{L^2}^2 + \epsilon \| \la v \ra^{k-1} g \|_{\dot H^1}^2 \\
&\lesssim  C_\epsilon \| \la v \ra^{k-1} g \|_{L^2}^2 + \epsilon \| \la v \ra^{\gamma/2 +k} g \|_{\dot H^1}^2   .
\eal
$$
for any $\epsilon >0$.
We then conclude gathering all previous estimates and taking $\epsilon >0$ small enough.
\end{proof}

We also prove an upper bound for $Q$ in the following lemma. It is worth mentioning that 
He~\cite{He} obtain similar estimates by a different method.

\begin{lem}\label{lem:Q2poly}
Let $\gamma \in (-2,0)$ and consider smooth functions $f$, $g$ and $h$. Then for any $\ell_1 + \ell_2 = \gamma + 2$ we have
$$
\left| \left\la Q(f,g) , h \la v \ra^{2k} \right\ra \right| 
\lesssim  \| f \|_{L^1_{\gamma+2}} \, \| g \|_{H^{1}_{\ell_1+k}}    \, \| h \|_{H^1_{\ell_2+k}}.
$$
\end{lem}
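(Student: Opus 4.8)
The plan is to start from the identity \eqref{eq:dtLpm} specialised to the weight $m=\la v\ra^k$ with $p=2$, so that
\[
\left\la Q(f,g), h\la v\ra^{2k}\right\ra = -\int_v (a*f)\,\nabla g\cdot\nabla(h\la v\ra^{2k}) + \int_v (b*f)\cdot\nabla(h\la v\ra^{2k})\, g ,
\]
obtained by a single integration by parts in the divergence form $Q(f,g)=\nabla\cdot\{(a*f)\nabla g - (b*f)g\}$. Expanding $\nabla(h\la v\ra^{2k}) = \la v\ra^{2k}\nabla h + 2k\la v\ra^{2k-2}v\,h$ splits the expression into four terms: two with the diffusion matrix $a*f$ (one pairing $\nabla g$ with $\nabla h$, one pairing $\nabla g$ with $h$) and two with the drift $b*f$ (pairing $g$ with $\nabla h$ and with $h$).

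Next I would bound the convolution coefficients pointwise. From \eqref{eq:aij}--\eqref{eq:bc}, $|a_{ij}(v-v_*)|\le |v-v_*|^{\gamma+2}$ and $|b_i(v-v_*)|\le 2|v-v_*|^{\gamma+1}\le C|v-v_*|^{\gamma+2}+C|v-v_*|^{\gamma}$; since $\gamma+2>0$, Peetre's inequality gives $|v-v_*|^{\gamma+2}\le C\la v\ra^{\gamma+2}\la v_*\ra^{\gamma+2}$ for the large-$|v-v_*|$ part, while the genuinely singular part $|v-v_*|^{\gamma}\Indiq{|v-v_*|\le 1}$ (and the analogous $|v-v_*|^{\gamma+1}\Indiq{|v-v_*|\le1}$ from $b$) is handled by Lemma~\ref{lem:Kalpha}. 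Thus each of the four terms is dominated, after Cauchy--Schwarz in $v$, by an expression of the form $K_{|\gamma|}(f,\,\text{deriv of }g,\,\text{deriv of }h)$ plus a non-singular piece, with the total weight on $g$ and $h$ adding up to $\la v\ra^{2(\gamma+2)+4k}$ across the pair — which is exactly why the hypothesis $\ell_1+\ell_2=\gamma+2$ appears: one distributes the weight $\la v\ra^{\ell_1+k}$ onto the $g$-factor and $\la v\ra^{\ell_2+k}$ onto the $h$-factor. Applying Lemma~\ref{lem:Kalpha}(2) with $\alpha=|\gamma|<3=d$, $\ell_0=|\gamma|$ (so $\la v\ra^{\ell_0}f$ is controlled by $\|f\|_{L^1_{\gamma+2}}$ since $|\gamma|\le\gamma+2$ for $\gamma\ge-1$, and otherwise by $\|f\|_{L^1_{\gamma+2}}$ after splitting off the $\Indiq{|v-v_*|>1}$ part which only needs $L^1$), and choosing $\sigma$ so that only first derivatives occur — e.g.\ $\sigma$ slightly less than $|\gamma|<1$ — the singular contribution is bounded by $\|f\|_{L^1_{\gamma+2}}\|g\|_{H^1_{\ell_1+k}}\|h\|_{H^1_{\ell_2+k}}$, and the non-singular contribution by the $L^2$-part of the same product, which is absorbed.

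The main obstacle is bookkeeping the weights and the singularity exponent together: after the integration by parts the cross term $\int (a*f)\nabla g\cdot v\,h\,\la v\ra^{2k-2}$ and the drift term $\int(b*f)\cdot\nabla h\,g\,\la v\ra^{2k}$ both carry one derivative on one factor and none on the other, so to fit the symmetric form $\|g\|_{H^1_{\ell_1+k}}\|h\|_{H^1_{\ell_2+k}}$ one must use the full $H^1$ norm (not just $\dot H^1$) on the differentiated factor and be careful that the weight exponents split as $\ell_1+\ell_2=\gamma+2$ rather than exceeding it — this forces the use of the growth $|a_{ij}|,|b_i|\lesssim\la v\ra^{\gamma+2}\la v_*\ra^{\gamma+2}$ exactly, with no room to spare. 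A secondary technical point is checking that Lemma~\ref{lem:Kalpha} is applicable with $\alpha=|\gamma|$ and with the derivative of one of the functions in the slot: one applies it with $g$ replaced by $\partial_i g$ (or $h$ by $\partial_i h$) and notes that $\|\la v\ra^{\ell_1}\partial_i g\|_{H^{\sigma}}\lesssim\|g\|_{H^{1+\sigma}_{\ell_1}}$, then absorbs the extra $\sigma<1$ worth of regularity into the $H^1$ norm by interpolation (Lemma~\ref{lem:ineg}) — but since the statement only claims an $H^1$ bound on the right-hand side, and $\sigma$ can be taken arbitrarily small, this interpolation is harmless. Collecting the four estimated terms yields the claimed inequality.
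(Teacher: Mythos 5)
Your skeleton — a single integration by parts in the divergence form of $Q(f,g)$, the expansion of $\nabla(h\la v\ra^{2k})$, the bound $|a(v-v_*)|\lesssim \la v\ra^{\gamma+2}\la v_*\ra^{\gamma+2}$ for the non-singular diffusion part, and the splitting of the weight according to $\ell_1+\ell_2=\gamma+2$ — is exactly the paper's. The genuine gap is in your treatment of the singular drift term. First, you degrade $|b(v-v_*)|\lesssim |v-v_*|^{\gamma+1}$ to $|v-v_*|^{\gamma}$ near the origin and then invoke Lemma~\ref{lem:Kalpha} with $\alpha=|\gamma|$; but for $\gamma\in(-2,-1]$ one has $|\gamma|>1$ (your aside ``$\sigma$ slightly less than $|\gamma|<1$'' already presumes $\gamma>-1$), and since the terms to be estimated pair an underived factor with $\nabla g$ or $\nabla h$, part (1) of that lemma forces $\sigma=0$ and hence requires $\alpha\le 1$: with $\alpha=|\gamma|>1$ the derivative budget of the claimed right-hand side ($H^1$ on each of $g$ and $h$) is exceeded. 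Second, the variant you actually cite, Lemma~\ref{lem:Kalpha}(2), cannot be used here at all: its second term carries $\| \la v\ra^{\ell_0} f\|_{L^{d/(d-\alpha+\sigma)}}$, an $L^p$ norm of $f$ with $p>1$, which is not controlled by the $\|f\|_{L^1_{\gamma+2}}$ appearing in the statement. Third, your proposed repair — feeding $\partial_i g$ into the lemma, obtaining $\|g\|_{H^{1+\sigma}_{\ell_1}}$, and ``absorbing'' the extra $\sigma$ into the $H^1$ norm via Lemma~\ref{lem:ineg} — runs the interpolation backwards: that lemma bounds a lower-order norm by a higher-order one (plus $L^1$ or $L^2$), and $\|g\|_{H^{1+\sigma}}$ is not $\lesssim \|g\|_{H^1}$ for any $\sigma>0$; moreover this is a pure upper bound on a trilinear form, so there is no negative term into which anything could be absorbed.

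The correct route, which is the paper's, is not to worsen the kernel. Keep $|b(v-v_*)|\lesssim |v-v_*|^{\gamma+1}$: if $\gamma+1\ge 0$ there is no singularity and Peetre's bound $\la v\ra^{\gamma+1}\la v_*\ra^{\gamma+1}$ suffices; if $\gamma+1<0$ the singularity order is $-(\gamma+1)<1$, and Lemma~\ref{lem:Kalpha}(1) with $\sigma=0$ places a fractional derivative of order $-(\gamma+1)<1$ on the \emph{underived} factor only, which its weighted $H^1$ norm does control, while $f$ stays in a weighted $L^1$. The diffusion part never needs Lemma~\ref{lem:Kalpha}, since $\gamma+2>0$. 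With this bookkeeping the four terms produced by your integration by parts do assemble into the claimed bound.
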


\begin{proof} We write
$$
\bal
\left\la Q(f,g) , g  \la v \ra^{2k} \right\ra =  \int \nabla \{(a * f)\nabla g \} h \la v \ra^{2k} - \int \nabla \{(b*f) g \} h \la v \ra^{2k} =: T_1 + T_2. 
\eal
$$
For the first term, we easily obtain, since $|a(v-v_*)| \lesssim |v-v_*|^{\gamma+2} \lesssim \la v_*\ra^{\gamma+2} \la v \ra^{\gamma+2}$, that
$$
\bal
T_1 
&\lesssim 
\iint |v-v_*|^{\gamma+2} \, |f_*| \, |\nabla g| \, |\nabla h| \, \la v \ra^{2k}
+ \iint |v-v_*|^{\gamma+2} \, |f_*| \, |\nabla g| \, |h| \, \la v \ra^{2k-1} \\
&\lesssim
\| f \|_{L^1_{\gamma+2}} \Big\{ \| \nabla g \|_{L^2_{\ell_1 +k}}  \| \nabla h \|_{L^2_{\ell_2 + k}} 
+ \| \nabla g \|_{L^2_{\ell_1 +k-1/2}}  \| h \|_{L^2_{\ell_2 + k-1/2}}
   \Big\}.
\eal
$$
Moreover, for the second term, it follows that
$$
\bal
T_2
&\lesssim 
\iint |v-v_*|^{\gamma+1} \, |f_*| \, |g| \, |\nabla h| \, \la v \ra^{2k}
+ \iint |v-v_*|^{\gamma+1} \, |f_*| \, |g| \, |h| \, \la v \ra^{2k-1}.
\eal
$$
Now we investigate two different cases.
If $\gamma+1 \ge 0$, using $|v-v_*|^{\gamma+1} \lesssim \la v_*\ra^{\gamma+1} \la v \ra^{\gamma+1}$ we obtain
$$
T_2 \lesssim
\| f \|_{L^1_{\gamma+1}} 
\Big\{
\| g \|_{L^2_{\ell_1 +k - 1/2}} \|  \nabla h \|_{L^2_{\ell_2+k-1/2}}
+\| g \|_{L^2_{\ell_1 +k - 1}} \|   h \|_{L^2_{\ell_2+k-1}}
\Big\}.
$$
On the other hand, if $\gamma+1 < 0$, i.e.\ $-2 < \gamma < -1$, we use Lemma~\ref{lem:Kalpha} to get
$$
\bal
T_2 
&\lesssim \| f \|_{L^1_{-(\gamma+1)}} 
\Big\{  \| \la v \ra^{\ell_1+k-1/2}  g \|_{L^2} \, \| \la v \ra^{\ell_2+k-1/2} \nabla h \|_{L^2} 
+ \| \la v \ra^{\ell_1+k-1/2}  g \|_{\dot H^{-(\gamma+1)}} \, \| \la v \ra^{\ell_2+k-1/2} \nabla h \|_{L^2} \\
&\qquad\qquad\qquad\qquad
+
\| \la v \ra^{\ell_1+k-1}  g \|_{L^2} \, \| \la v \ra^{\ell_2 +k -1} h \|_{L^2} 
+ \| \la v \ra^{\ell_1+k-1}  g \|_{\dot H^{-(\gamma+1)}} \, \| \la v \ra^{\ell_2+k-1}  h \|_{L^2}
\Big\}.
\eal
$$
We conclude gathering the above estimates.
\end{proof}

We prove now some estimates for weighted $L^2$ and Sobolev norms.

\begin{prop}\label{prop:L2}
Let $\gamma \in (-2,0)$, $f_0 \in L^1_2 \cap L \log L (\R^3)$ and consider a weak solution $f \in L^\infty([0,\infty) ; L^1_2 \cap L \log L)$ of the Landau equation associated to $f_0$. Then, at least formally, there holds:

\begin{enumerate}[(1)]

\item Let $0 \le k \le 2 + 3\gamma/4$. Then for any $t_0 > 0$ there is $C=C(t_0) >0$ such that
$$
\sup_{t \ge t_0} \left\{ \|  f(t) \|_{L^2_k}^2 + \int_{t}^{t+1} \|  f(\tau) \|_{H^1_{k+\gamma/2}}^2 \, d\tau   \right\} \le C.
$$

\item Let $k > 2 + 3\gamma/4$ and suppose $f_0 \in L^1_{k-3\gamma/4}$. Then for any $t_0 > 0$ there is $C=C(t_0)>0$ such that
$$
\forall\, t \ge t_0, \quad
 \|  f(t) \|_{L^2_k}^2 + \int_{0}^{t} \| f(\tau) \|_{H^1_{k+\gamma/2}}^2 \, d\tau    \le C(1+t).
$$
\end{enumerate}

\end{prop}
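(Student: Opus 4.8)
The strategy is to use the coercivity estimate from Lemma~\ref{lem:Qpoly} as the engine and to close a differential inequality for $\|f(t)\|_{L^2_k}^2$, distinguishing the two ranges of $k$ exactly as in the statement. Throughout I would use that $\|f(t)\|_{L^1_2}$ is uniformly bounded (from the weak-solution class), that the matrix $a*f$ is uniformly coercive with constant $K$ depending only on $\|f\|_{L^1_2\cap L\log L}$ by Lemma~\ref{lem:coerc}, and that weighted $L^1$ moments grow at most linearly in time by Lemma~\ref{lem:moments}.

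\emph{Step 1 (case $0\le k\le 2+3\gamma/4$).} Taking $g=f$ in part (i) of Lemma~\ref{lem:Qpoly} gives
$$
\frac{d}{dt}\|f\|_{L^2_k}^2 \le -K\|f\|_{\dot H^1_{k+\gamma/2}}^2 + C\|f\|_{L^2_{k+\gamma/2}}^2.
$$
Since $\gamma<0$, the weight on the right-hand side, $k+\gamma/2$, is strictly smaller than the weight $k$ on the left of the Sobolev term; more importantly, interpolating $\|f\|_{L^2_{k+\gamma/2}}$ between $\|f\|_{\dot H^1_{k+\gamma/2}}$ and a low weighted $L^1$-norm (using Lemma~\ref{lem:ineg} together with a weighted Nash-type inequality, i.e.\ the same Sobolev-plus-H\"older interpolation used in the proof of Lemma~\ref{lem:reg}) lets me absorb a small multiple of the dissipation and be left with a term controlled by $\|f\|_{L^1_{m}}$ for some fixed moment $m$ depending on $k,\gamma$. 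The key point is that for $k\le 2+3\gamma/4$ this required moment $m$ stays below a threshold that is uniformly bounded in time (one should check $m\le$ something like $2$, or at worst a moment propagated uniformly; the bound $3\gamma/4$ in the weight is precisely what makes the interpolation exponent work out so no growing moment is needed). This yields a differential inequality of the form $\dot X\le -c X^{1+\theta} + C$ with $\theta>0$ and constants uniform in time, whose solutions are bounded for $t\ge t_0$ for every $t_0>0$ regardless of the (possibly infinite) initial value $X(0)$ — this is the standard ``appearance of $L^2$ regularity'' argument, identical in spirit to \cite[Lemma~3.9]{GMM} used earlier. Integrating the differential inequality once more over $[t,t+1]$ and using the already-established uniform $L^2_k$ bound controls $\int_t^{t+1}\|f\|_{\dot H^1_{k+\gamma/2}}^2$, and adding the $L^2$ part (which is dominated by $L^2_k$) gives the full $H^1_{k+\gamma/2}$ integral bound.

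\emph{Step 2 (case $k>2+3\gamma/4$).} Now I use part (ii) of Lemma~\ref{lem:Qpoly}, which gives an extra good term:
$$
\frac{d}{dt}\|f\|_{L^2_k}^2 \le -K\|f\|_{\dot H^1_{k+\gamma/2}}^2 - K'\|f\|_{L^2_{k+\gamma/2}}^2 + C\|f\|_{L^2_{k-1}}^2.
$$
The strategy is a bootstrap on $k$: I interpolate $\|f\|_{L^2_{k-1}}$ between the dissipative norm $\|f\|_{L^2_{k+\gamma/2}}$ (note $k+\gamma/2 < k-1$ iff $\gamma<-2$, which fails, so instead) — more carefully, I interpolate $\|f\|_{L^2_{k-1}}$ against $\|f\|_{\dot H^1_{k+\gamma/2}}$ and a weighted $L^1$-norm, the surviving low-order term now being $\|f\|_{L^1_{k-3\gamma/4}}$ (this is where the hypothesis $f_0\in L^1_{k-3\gamma/4}$ enters), which by Lemma~\ref{lem:moments} grows at most linearly in $t$. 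Absorbing a small fraction of both good terms leaves
$$
\frac{d}{dt}\|f\|_{L^2_k}^2 \le -c\,\|f\|_{L^2_k}^2 + C(1+t),
$$
after dominating $\|f\|_{L^2_k}^2 \lesssim \|f\|_{L^2_{k+\gamma/2}}^2 + (\text{lower, absorbable})$ — actually since $k+\gamma/2<k$ one must be slightly careful and instead get $\dot X\le -cX^{1+\theta}+C(1+t)$, whose solutions satisfy $X(t)\le C(1+t)$ for $t\ge t_0$; then integrating the original inequality in time gives $\int_0^t\|f\|_{H^1_{k+\gamma/2}}^2\le C(1+t)$.

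\textbf{Main obstacle.} The delicate point is Step~2: tracking the weights through the interpolation so that (a) enough of the dissipation $\|f\|_{\dot H^1_{k+\gamma/2}}^2$ and $\|f\|_{L^2_{k+\gamma/2}}^2$ can be absorbed, and (b) the leftover $L^1$-moment that must be paid for is exactly $k-3\gamma/4$ and no more — the exponent $3\gamma/4$ is forced by the scaling in the weighted Nash/Sobolev interpolation in dimension $3$, and getting it right (rather than, say, $k-\gamma$ or $k-2\gamma$) is the crux. A secondary subtlety is justifying the formal manipulations (the statement already hedges with ``at least formally''), which is standard but requires the a priori regularity from Step~1 to make the integrations by parts in $\langle Q(f,f),f\langle v\rangle^{2k}\rangle$ legitimate; one typically argues on a regularised equation and passes to the limit.
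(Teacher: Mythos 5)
Your Step 1 is essentially the paper's argument (apply Lemma~\ref{lem:Qpoly}-(i), bound the dissipation from below by the weighted Nash inequality so that the $L^1$-moment appearing is exactly $k-3\gamma/4\le 2$, absorb the bad $L^2_{k+\gamma/2}$ term, and run the $\dot X\le -cX^{1+\theta}+C$ appearance argument), so no objection there.

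Step 2, however, has a genuine gap. You correctly observe that $k+\gamma/2>k-1$ when $\gamma>-2$, but then you discard precisely the absorption this makes possible and instead interpolate $\|f\|_{L^2_{k-1}}^2$ against the $\dot H^1_{k+\gamma/2}$ dissipation and a weighted $L^1$-norm. That route does not give the claimed inequality $\dot X\le -cX^{1+\theta}+C(1+t)$ with a time-uniform $c$: (a) Young's inequality forces the leftover to be the \emph{square} of the $L^1_{k-3\gamma/4}$-type moment, hence a source of size $C(1+t)^2$, not $C(1+t)$; and (b) converting the remaining dissipation into $-cX^{1+\theta}$ again uses the Nash inequality, whose constant carries the factor $\|f\|_{L^1_{k-3\gamma/4}}^{-4/3}\sim (1+t)^{-4/3}$, so the dissipation coefficient degrades in time. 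With source $(1+t)^2$ and dissipation $-c(1+t)^{-4/3}X^{5/3}$, the barrier analysis only yields $\|f(t)\|_{L^2_k}^2\lesssim (1+t)^2$ (and correspondingly worse integral bounds), which misses the stated linear rate. The paper's proof avoids this by using the extra good term of Lemma~\ref{lem:Qpoly}-(ii): since $k-1<k+\gamma/2$, one writes $\|f\|_{L^2_{k-1}}^2\le \eps\|f\|_{L^2_{k+\gamma/2}}^2+C_\eps\|f\|_{L^2}^2$ and absorbs the first piece into $-K'\|f\|_{L^2_{k+\gamma/2}}^2$; the leftover $\|f\|_{L^2}^2$ is \emph{uniformly} bounded for $t\ge t_0/2$ by part (1). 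The growing moment $\|f(t)\|_{L^1_{k-3\gamma/4}}\le C(1+t)$ (Lemma~\ref{lem:moments}, where the hypothesis on $f_0$ enters) is then used only on the compact window $[t_0/2,3t_0/2]$, through the Nash term, to produce a finite value $\|f(t_1)\|_{L^2_k}<\infty$ at some $t_1$ near $t_0$; afterwards one simply drops the negative terms to get $\frac{d}{dt}\|f\|_{L^2_k}^2\le C$, hence $\|f(t)\|_{L^2_k}^2\le C(1+t)$, and integrating the original inequality gives $\int_{t_0}^t\|f\|_{H^1_{k+\gamma/2}}^2\le C(1+t)$. So the missing idea is to use the $-K'\|f\|_{L^2_{k+\gamma/2}}^2$ term for the absorption and to confine the time-growing moment to the appearance step, rather than letting it enter the long-time differential inequality.
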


\begin{proof}

{\it(1)} From Proposition \ref{lem:Qpoly}, for $0\le k \le 2 + 3\gamma/4$, we have
\beqn\label{eq:dtL2k}
\bal
\frac{d}{dt} \| f \|_{L^2_k}^2 
&\le - K \| f \|_{\dot H^1_{k + \gamma/2}}^2 + C \|  f \|_{L^2_{k + \gamma/2}}^2 \\
&\le - K \|  \la v \ra^{\gamma/2 + k} f \|_{\dot H^1}^2 + C \|  f \|_{L^2_{k + \gamma/2}}^2.
\eal
\eeqn
Using the following inequality (obtained by H\"older and Sobolev's inequalities in dimension $d=3$),
\beqn\label{eq:Nashpoids}
\| \la v \ra^\alpha u \|_{L^2} \le C \| \nabla u \|_{L^2}^{3/5} \, \| \la v \ra^{5\alpha/2} u \|_{L^1}^{2/5},
\eeqn
we obtain that, choosing $\alpha = - \gamma/2$,
\beqn\label{eq:dtL2kbis}
\frac{d}{dt} \| f \|_{L^2_k}^2 
\le - K \|  f \|_{L^1_{k-3\gamma/4}}^{-4/3} \, \|  f \|_{L^2_{k}}^{2+4/3} + C \|  f \|_{L^2_{k + \gamma/2}}^2.
\eeqn
Since $\|  f(t) \|_{L^1_{k-3\gamma/4}} \le \|  f(t) \|_{L^1_2}$ is uniformly bounded in time, we finally get, applying Young's inequality for the last term,
$$
\frac{d}{dt} \| f \|_{L^2_k}^2 
\le - K \| f \|_{L^2_k}^{2+4/3} + C,
$$
from which we deduce by standard arguments that for any $t_0 >0$ there exists $C=C(t_0)>0$ such that
$$
\sup_{t \ge t_0}  \|  f(t) \|_{L^2_k}^2 \le C.
$$
Coming back to \eqref{eq:dtL2k} we also obtain
$$
\sup_{t \ge t_0} \int_{t}^{t+1} \|  f(\tau) \|_{H^1_{k+\gamma/2}}^2 \, d\tau \le C.
$$

\medskip
\noindent
{\it (2)} 
Remark that $k > 2 + 3\gamma/4 > (\gamma+3)/2$, hence Proposition~\ref{lem:Qpoly} yields
\beqn\label{eq:dtL2kk}
\bal
\frac{d}{dt} \| f \|_{L^2_k}^2 
&\le - K \|  f \|_{\dot H^1_{k + \gamma/2}}^2
-K \|  f \|_{L^2_{k+\gamma/2}}^2 
+ C \| f \|_{L^2_{k-1}}^2 \\
&\le - K \|  \la v \ra^{k + \gamma/2}  f \|_{\dot H^1}^2
-K \|  f \|_{L^2_{k+\gamma/2}}^2 
+ C \| f \|_{L^2_{k-1}}^2.
\eal
\eeqn
Using the interpolation inequality, for any $\delta,\eps >0$,
$$
\|  f \|_{L^2_{\alpha}}^2 \le \eps \| f \|_{L^2_{\alpha+\delta}}^2 + C_\eps \| f \|_{L^2}^2,
$$
for $\eps >0$ small enough and \eqref{eq:Nashpoids}, we finally get
\beqn\label{eq:dtL2kkbis}
\bal
\frac{d}{dt} \| f \|_{L^2_k}^2 
&\le - K \| \la v \ra^{\gamma/2 + k} f \|_{\dot H^1}^2
-K \|  f \|_{L^2_{k+\gamma/2}}^2 
+ C \| f \|_{L^2}^2 \\
&\le - K \| f \|_{L^1_{k-3\gamma/4}}^{-4/3} \, \| f \|_{L^2_k}^{2+4/3}
-K \|  f \|_{L^2_{k+\gamma/2}}^2 
+ C \| f \|_{L^2}^2.
\eal
\eeqn
Now we fix some $t_0 >0$. From point (1) we know that there exists $C>0$ such that $\sup_{t \ge t_0/2} \| f(t) \|_{L^2}^2 \le C$. Moreover, since $f_0 \in L^1_{k - 3\gamma/4}$, Lemma~\ref{lem:moments} implies $\| f(t) \|_{L^1_{k-3\gamma/4}} \le C(1+t)$ for any $t\ge 0$, so that 
$\sup_{[t_0/2 , 3t_0/2]} \| f(t) \|_{L^1_{k-3\gamma/4}} < \infty$. Writing \eqref{eq:dtL2kkbis} for $ t \in[t_0/2, 3t_0/2]$, we obtain by standard arguments that for any $t_1 > t_0/2$ we have 
$\sup_{[t_1 , 3t_0/2]} \| f(t) \|_{L^2_{k}} < \infty$. Coming back to \eqref{eq:dtL2kkbis} and neglecting the negative terms, we obtain
$$
\forall\, t \ge t_0, \qquad
\frac{d}{dt} \| f \|_{L^2_k}^2 
\le  C,
$$
from which we have
$$
\|  f(t) \|_{L^2_k}^2 \le C\int_{t_0}^t d\tau + \| f(t_0) \|_{L^2_k}^2
\le C(1+t).
$$
We also deduce 
$$
\int_{t_0}^t  \|  f(\tau) \|_{H^1_{k+\gamma/2}}^2 \, d\tau \le C(1+t)
$$
coming back to \eqref{eq:dtL2kk} and using the previous bound.
\end{proof}

\begin{prop}\label{prop:H12n}
Let $\gamma \in (-2,0)$, $f_0 \in L^1_2 \cap L \log L (\R^3)$ and consider a weak solution $f \in L^\infty([0,\infty) ; L^1_2 \cap L \log L)$ of the Landau equation associated to $f_0$. 
Then, at least formally, there hold:

\begin{enumerate}[(1)]

\item Suppose $f_0 \in L^1_{k - 5\gamma/4}$. Then for any $t_0 > 0$ there exists $C=C(t_0)>0$ such that, for all $t \ge t_0$,
$$
\| f(t) \|_{H^1_k}^2 + \int_{t_0}^{t} \| f(\tau) \|_{H^2_{k+ \gamma/2}}^2 \, d\tau 
\le C (1 + t)^2.
$$

\item Suppose $f_0 \in L^1_{k -7\gamma/4} \cap L^1_{2l -k - 7\gamma/4}$ with $l := \max(\gamma+4, k + \gamma/2 + 2)$. Then for any $t_0 > 0$ there exists $C=C(t_0)>0$ such that, for all $t \ge t_0$,
$$
\| f(t) \|_{H^2_k}  \le C (1 + t)^{7/2}.
$$

\item Suppose further that the weak solution satisfies $f \in L^\infty([0, \infty) ; L^1_l)$ for any $l\ge 0$. Then for all $ t_1 > 0$, any $k\ge 0$ and $n \in \N$, there is $C=C(t_1)>0$ such that
$$
\sup_{t \ge  t_1} \| f(t) \|_{H^n_k} \le C.
$$

\end{enumerate}

\end{prop}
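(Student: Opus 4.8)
The three bounds are all obtained by the same scheme: differentiate the Landau equation the appropriate number of times, perform a weighted $L^2$ energy estimate on the derivatives, control the dissipative part coming from $Q(f,\cdot)$ by Lemma~\ref{lem:Qpoly} and the commutator terms by Lemma~\ref{lem:Q2poly}, and then close an ordinary differential inequality using the weighted interpolation inequality of Lemma~\ref{lem:interpolation}, a Nash-type inequality as in \eqref{eq:Nashpoids}, and the lower-order information provided by Proposition~\ref{prop:L2} together with the moment bounds of Lemma~\ref{lem:moments}. I describe part (1) in detail, the proofs of (2) and (3) being obtained by iterating it, and all computations below are formal, to be justified on a regularised problem (e.g.\ on the approximating scheme of \cite{Vi2}) with constants uniform in the regularisation, before passing to the limit.

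\emph{Part (1).} For $|\beta|=1$, bilinearity of $Q$ gives $\partial_t(\partial^\beta f)=Q(f,\partial^\beta f)+Q(\partial^\beta f,f)$. Testing against $\partial^\beta f\,\langle v\rangle^{2k}$, summing over $\beta$ and adding the $L^2_k$ estimate for $f$ from Proposition~\ref{prop:L2}, one obtains an evolution inequality for $\|f\|_{H^1_k}^2$. The term $\langle Q(f,\partial^\beta f),\partial^\beta f\langle v\rangle^{2k}\rangle$ is treated by Lemma~\ref{lem:Qpoly} applied with $g=\partial^\beta f$ (its constants depending only on $\|f\|_{L^1_2\cap L\log L}$, which is uniformly bounded), producing the good term $-K\|f\|_{\dot H^2_{k+\gamma/2}}^2$ plus lower-order contributions. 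The commutator $\langle Q(\partial^\beta f,f),\partial^\beta f\langle v\rangle^{2k}\rangle$ is bounded by Lemma~\ref{lem:Q2poly}, namely by $\|\partial^\beta f\|_{L^1_{\gamma+2}}\,\|f\|_{H^1_{\ell_1+k}}\,\|\partial^\beta f\|_{H^1_{\ell_2+k}}$ for any splitting $\ell_1+\ell_2=\gamma+2$; the two $H^1$-factors are of $H^2$-type in $f$ and, since they carry more weight than the dissipation does ($\gamma+2>0$ against weight $\gamma/2$), are brought down to $\epsilon\|f\|_{H^2_{k+\gamma/2}}^2$ plus a term in $\|f\|_{L^2_{k'}}$ by Lemma~\ref{lem:interpolation}. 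The one quantity not directly at hand, $\|\partial^\beta f\|_{L^1_{\gamma+2}}$, is $\le C\|\partial^\beta f\|_{L^2_m}$ for $m>\gamma+7/2$ by Cauchy--Schwarz, and $\|\partial^\beta f\|_{L^2_m}$ is in turn estimated, again via Lemma~\ref{lem:interpolation}, by $\epsilon\|f\|_{H^2_{k+\gamma/2}}$ plus $C\|f\|_{L^2_{k-5\gamma/4}}$-type terms; this is where the hypothesis $f_0\in L^1_{k-5\gamma/4}$ enters, since Lemma~\ref{lem:moments} then gives $\|f(t)\|_{L^1_{k-5\gamma/4}}\le C(1+t)$ and Proposition~\ref{prop:L2} the corresponding $L^2$ bounds. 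Collecting everything and absorbing the $\epsilon$-terms into the dissipation, one reaches
$$\frac{d}{dt}\|f\|_{H^1_k}^2 \le -K\|f\|_{H^2_{k+\gamma/2}}^2 + C(1+t)\bigl(1+\|f\|_{H^1_k}^2\bigr).$$
Using a weighted Nash-type inequality as in \eqref{eq:Nashpoids} to replace the dissipation by $-K'\|f\|_{L^1_{k-3\gamma/4}}^{-4/3}\|f\|_{H^1_k}^{2+4/3}$ and invoking the (at most linear in $t$) moment bounds, an ODE comparison yields $\|f(t)\|_{H^1_k}^2\le C(1+t)^2$ for $t\ge t_0$; integrating the differential inequality then gives the stated bound on $\int_{t_0}^{t}\|f(\tau)\|_{H^2_{k+\gamma/2}}^2\,d\tau$.

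\emph{Parts (2) and (3).} Part (2) follows by differentiating twice: besides $Q(f,\partial^\beta f)$ and $Q(\partial^\beta f,f)$ one now also has the quadratic term $Q(\partial^{\beta_1}f,\partial^{\beta_2}f)$ with $|\beta_1|=|\beta_2|=1$, again controlled by Lemma~\ref{lem:Q2poly}; its $\|\partial^{\beta_1}f\|_{L^1_{\gamma+2}}$ factor is now estimated (up to $(1+t)$ growth) by part~(1) and Lemma~\ref{lem:moments}, while the two $H^1$-factors are of $H^2$-type in $f$ and are absorbed into the dissipation $-K\|f\|_{H^3_{k+\gamma/2}}^2$ after Lemma~\ref{lem:interpolation}; the precise hypotheses $f_0\in L^1_{k-7\gamma/4}\cap L^1_{2l-k-7\gamma/4}$ with $l=\max(\gamma+4,k+\gamma/2+2)$ are dictated exactly by the weights needed in these interpolations. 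Feeding the $(1+t)^2$ bound from (1) into the resulting ODE inequality for $\|f\|_{H^2_k}^2$ and integrating gives $\|f(t)\|_{H^2_k}^2\le C(1+t)^7$, i.e.\ the claimed $(1+t)^{7/2}$. For part (3), the extra assumption that \emph{all} polynomial moments are uniformly bounded in time turns every factor $(1+t)$ above into a constant, so the same scheme, run inductively on $n$ --- each step gaining one derivative on a time interval $[t_n,\infty)$ with $t_n$ slightly larger than $t_{n-1}$, the super-linear coercive term in the ODE making the bound independent of the (possibly large) value at $t_{n-1}$, i.e.\ instantaneous appearance of regularity --- yields $\sup_{t\ge t_1}\|f(t)\|_{H^n_k}\le C$ for every $n$ and every weight $k$.

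The main obstacle is precisely the treatment of the commutator terms $Q(\partial^\alpha f,f)$ and $Q(\partial^{\alpha_1}f,\partial^{\alpha_2}f)$: through Lemma~\ref{lem:Q2poly} they produce an $L^1_{\gamma+2}$ norm of a derivative of $f$ --- an object carrying strictly more weight than the norm being estimated --- together with $H^1$-factors sitting at top order; rendering these subordinate to the dissipative term $-K\|f\|_{H^{n+1}_{k+\gamma/2}}^2$ requires a careful use of the weighted interpolation Lemma~\ref{lem:interpolation} to trade powers of $\langle v\rangle$ for derivatives, and it is the bookkeeping in this trade-off, combined with the linearly growing moment bounds of Lemma~\ref{lem:moments}, that forces the exact moment assumptions in (1)--(2) and, in (3), the use of moments of every order.
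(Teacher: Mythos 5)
Your overall scheme (energy estimates on derivatives, coercivity from Lemma~\ref{lem:Qpoly}, interpolation, moment bounds) is the right family of tools, but the way you treat the commutator in part (1) creates a gap that invalidates the claimed rate. Applying Lemma~\ref{lem:Q2poly} to $\la Q(\partial^\beta f,f),\partial^\beta f\la v\ra^{2k}\ra$ forces you to control $\|\partial^\beta f\|_{L^1_{\gamma+2}}$ and $\|f\|_{H^1_{\ell_1+k}}$ with $\ell_1=\gamma/2+2$, i.e.\ weighted norms of \emph{derivatives} carrying strictly more weight than the dissipation $-K\|f\|_{H^2_{k+\gamma/2}}^2$ provides; Lemma~\ref{lem:moments} gives no information on derivatives, so (as you do) one must interpolate these factors against the dissipation, and the price is exactly the forcing term $C(1+t)\bigl(1+\|f\|_{H^1_k}^2\bigr)$ you end up with, plus $L^2$ moments with weights beyond $k-5\gamma/4$. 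The paper avoids this entirely: for $|\beta|=1$ it integrates by parts in $Q(\partial^\beta f,f)$, using the symmetry of $a$ to throw the derivatives back onto the kernel ($\partial^\beta\partial^\beta a_{ij}*f$, $\partial^\beta a_{ij}*f$), so that only the \emph{undifferentiated} $f$ appears in weighted $L^1$ (uniformly bounded by conservation laws), and the remaining quadratic terms in $\nabla f$, $\nabla^2 f$ are handled by Lemma~\ref{lem:Kalpha} and Lemma~\ref{lem:ineg} (with a separate HLS/interpolation argument when $\gamma\in(-2,-3/2]$). The outcome is the much better inequality $\frac{d}{dt}\|f\|_{\dot H^1_k}^2\le -K\|f\|_{H^2_{k+\gamma/2}}^2+C\|f\|_{L^2_{k+\gamma/2}}^2$ (plus an $L^2$-moment term in the very soft subcase), whose forcing is $\le C(1+t)$ by Proposition~\ref{prop:L2} under $f_0\in L^1_{k-5\gamma/4}$; the bound $C(1+t)^2$ then follows by plain time integration from a time $t_1\in[t_0/2,t_0]$ chosen by a mean-value argument so that $\|f(t_1)\|_{H^1_k}<\infty$, and the same integration gives the $H^2$ time-integral bound. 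Lemma~\ref{lem:Q2poly} is only used at the $H^2$ level (part (2)), where every commutator has $|\beta_1|\ge1$ and the bookkeeping closes.

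Independently of this, your final ODE step does not deliver the stated rate. From
$X'\le -K(1+t)^{-4/3}X^{5/3}+C(1+t)(1+X)$ the dissipation only dominates the forcing when $X\gtrsim(1+t)^{7/2}$ (balance $K(1+t)^{-4/3}X^{5/3}\sim C(1+t)X$), so comparison gives at best $X\lesssim(1+t)^{7/2}$, not $(1+t)^2$; and even granting $X\le C(1+t)^2$, integrating your inequality would bound $\int_{t_0}^t\|f\|_{H^2_{k+\gamma/2}}^2$ by $C(1+t)^4$, not $C(1+t)^2$. In the paper the $(1+t)^2$ is not produced by a Nash-type comparison at all (that device is used in Proposition~\ref{prop:L2} and in part (3), where the forcing is bounded); for part (1) it is simply the time integral of the linearly growing $L^2$ moment. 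Since part (2) feeds on the $(1+t)^2$ bound of part (1) (through $\|f\|_{H^1_{2l-k-\gamma/2}}^6\lesssim(1+t)^6$, integrated to $(1+t)^7$), the gap propagates to your parts (2) and (3) as written.
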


\begin{proof}

{(1)} 
Let $\alpha \in \N^3$ be a multi-index such that $|\alpha|=1$ and denote $g = \partial^\alpha f$, which satisfies the equation
$$
\partial_t g = Q(f, g) + Q(g , f),
$$
and then we easily compute
\beqn\label{eq:dgdt}
\frac12 \frac{d}{dt} \|   g \|_{L^2_k}^2 = \left\la  Q(f, g), g \la v \ra^{2k} \right\ra +
\left\la Q(g,f), g  \la v \ra^{2k} \right\ra =: T_1 + T_2.
\eeqn
From Lemma~\ref{lem:Qpoly} we observe that
\beqn\label{T1}
\bal
T_1 
&\le 
- K \|  g \|_{\dot H^1_{k+\gamma/2}}^2 + C \|  g\|_{L^2_{k+\gamma/2}}^2\\
&\le 
- K \| \la v \ra^{\gamma/2+k} g \|_{\dot H^1}^2 + C \|  g\|_{L^2_{k+\gamma/2}}^2.
\eal
\eeqn
For the second term, we write $T_2 = T_{22} + T_{21}$ with
$$
T_{21} := \int \nabla \cdot \{ (a*g) \nabla f \} g \la v \ra^{2k}
\quad\text{and}\quad
T_{22} := -\int \nabla \cdot \{(b*g) f \} g \la v \ra^{2k}.
$$
Integrating by parts and using the symmetry of $a$, it follows that
$$
\bal
T_{21} 
&= - \int  (a_{ij}* g) \partial_j f \partial^\alpha(\partial_i f) \la v \ra^{2k}
- \int  (a_{ij}* g) \partial_j f \partial^\alpha f  \partial_i \la v \ra^{2k} \\
&= \frac12 \int (\partial^{\alpha} \partial^{\alpha} a_{ij} * f) \partial_i f \partial_j f \la v \ra^{2k} 
+\frac12 \int (\partial^{\alpha} a_{ij} * f) \partial_i f \partial_j f  \partial^{\alpha}\la v \ra^{2k}
- \int  (a_{ij}* g) \partial_j f \partial^\alpha f  \partial_i \la v \ra^{2k} \\
&\lesssim \iint |v-v_*|^\gamma f_* \, |\nabla f|^2 \la v \ra^{2k} 
+ \iint |v-v_*|^{\gamma+1} f_* \, |\nabla f|^2 \la v \ra^{2k-1}. 
\eal
$$
Using Lemma~\ref{lem:Kalpha}-(1), it follows that
$$
\bal
\iint |v-v_*|^\gamma f_* \, |\nabla f|^2 \la v \ra^{2k}
&\lesssim \| \la v \ra^{-\gamma} f \|_{L^1} \Big\{
\| \la v \ra^{\gamma/2+k} \nabla f \|_{L^2}^2
+\| \la v \ra^{\gamma/2+k} \nabla f \|_{\dot H^{-\gamma/2}}^2 \Big\},
\eal
$$
and also
$$
\bal
&\iint |v-v_*|^{\gamma+1} f_* \, |\nabla f|^2 \la v \ra^{2k-1}\\
&\qquad\quad\lesssim 
\left\{
\bal
& \| \la v \ra^{\gamma+1} f \|_{L^1} \, \| \la v \ra^{\gamma/2 + k} \nabla f \|_{L^2}^2, \quad &\text{if } \gamma +1 \ge 0;\\
&\| \la v \ra^{-\gamma-1} f \|_{L^1} \left(
\| \la v \ra^{\gamma/2+k} \nabla f \|_{L^2}^2
+\| \la v \ra^{\gamma/2+k} \nabla f \|_{\dot H^{-(\gamma+1)/2}}^2
\right), \quad &\text{if } \gamma + 1 <0.
\eal
\right.
\eal
$$
Using the uniform in time bound of $\| f(t) \|_{L^1_{2}}$, the previous estimates yield
\beqn\label{T21}
\bal
T_{21} 
&\lesssim \| \la v \ra^{\gamma/2+k} \nabla f \|_{L^2}^2
+\| \la v \ra^{\gamma/2+k} \nabla f \|_{\dot H^{-\gamma/2}}^2\\
&\lesssim C(\epsilon)\|  f \|_{\dot H^1_{k + \gamma/2}}^2 
+ \epsilon \|  f \|_{\dot H^2_{k + \gamma/2}}^2,
\eal
\eeqn
for any $\epsilon >0$, thanks to the interpolation Lemma~\ref{lem:ineg}.
For the term $T_{22}$ we obtain
$$
T_{22}
\lesssim \iint |v-v_*|^\gamma f_* \, f \, |\nabla^2 f| \, \la v \ra^{2k}
+ \iint |v-v_*|^\gamma f_* \, f \, |\nabla f| \, \la v \ra^{2k-1} =: T_{221} + T_{222}.
$$
Thanks to Lemma~\ref{lem:Kalpha}-(1) again, we get
\beqn\label{T222}
\bal
T_{222} 
&\lesssim \| \la v \ra^{-\gamma} f \|_{L^1} 
\Big\{ \| \la v \ra^{\gamma/2 + k} f \|_{L^2} \| \la v \ra^{\gamma/2 + k} \nabla f \|_{L^2}
+ \| \la v \ra^{\gamma/2 + k} f \|_{\dot H^{-\gamma-1}} \| \la v \ra^{\gamma/2 + k} \nabla f \|_{\dot H^{1}}  \Big\}\\
& \lesssim  
 \| \la v \ra^{\gamma/2 + k} f \|_{L^2}^2 + \| \la v \ra^{\gamma/2 + k} \nabla f \|_{L^2}^2
+ C(\epsilon)\| \la v \ra^{\gamma/2 + k} f \|_{\dot H^{-\gamma-1}}^2 
+ \epsilon \| \la v \ra^{\gamma/2 + k} \nabla f \|_{\dot H^{1}}^2 \\
& \lesssim  
 C(\epsilon)\|  f \|_{H^1_{k + \gamma/2}}^2 
+ \epsilon \|  f \|_{\dot H^2_{k + \gamma/2}}^2,
\eal
\eeqn
for any $\epsilon >0$, where we have used Young's inequality and the interpolation Lemma~\ref{lem:ineg}.

For the last term $T_{221}$, we split into two different cases.

\medskip
\noindent
\textit{Case (i): $\gamma \in (-3/2,0)$.} 
Using again Lemma~\ref{lem:Kalpha}-(1) (remark that here we need $\gamma >-3/2 $), it follows
\beqn\label{T221-cas1}
\bal
T_{221} 
&\lesssim \| \la v \ra^{-\gamma} f \|_{L^1} 
\Big\{ \| \la v \ra^{\gamma/2 + k} f \|_{L^2} \| \la v \ra^{\gamma/2 + k} \nabla^2 f \|_{L^2}
+ \| \la v \ra^{\gamma/2 + k} f \|_{\dot H^{-\gamma}} \| \la v \ra^{\gamma/2 + k} \nabla^2 f \|_{L^2}  \Big\} \\
&\lesssim  
 C(\epsilon) \| \la v \ra^{\gamma/2 + k} f \|_{L^2}^2 
+ C(\epsilon)\| \la v \ra^{\gamma/2 + k} f \|_{\dot H^{-\gamma}}^2 
+ \epsilon \| \la v \ra^{\gamma/2 + k} \nabla^2 f \|_{L^2}^2 \\
&\lesssim  
 C(\epsilon)\|  f \|_{H^1_{k + \gamma/2}}^2 
+ \epsilon \|  f \|_{\dot H^2_{k + \gamma/2}}^2  .
\eal
\eeqn
Now, coming back to \eqref{eq:dgdt}, gathering the above estimates \eqref{T1}-\eqref{T21}-\eqref{T222}-\eqref{T221-cas1} and taking $\epsilon>0$ small enough, we obtain
\beqn\label{eq:H1k}
\bal
\frac{d}{dt} \|  f \|_{\dot H^1_k}^2 
&\le - K \| f \|_{H^2_{k+\gamma/2}}^2 + C \| f \|_{H^1_{k+\gamma/2}}^2\\
&\le - K \| f \|_{H^2_{k+\gamma/2}}^2 
+C \|  f \|_{L^2_{k+\gamma/2}}^2,
\eal
\eeqn
where we have used Lemma~\ref{lem:ineg} again.

We fix some $t_0 >0$. Since $f_0 \in L^{1}_{k - 5\gamma/4}$, we can use Proposition~\ref{prop:L2} to get that there is $C>0$ such that
$$
\int_{t_0/2}^{t_0} \| f(\tau) \|_{H^1_{k}}^2 \, d\tau < C
\quad\text{and}\quad
\| f(t) \|_{L^2_{k - \gamma/2}}^2 \le C(1+t), \quad \forall\, t \ge t_0 / 2,
$$
from which we can choose some $t_1 \in [t_0/2 , t_0]$ such that $ \| f(t_1) \|_{H^1_{k}}^2 < \infty$. Now we integrate \eqref{eq:H1k} from $t_1$ to $t$ to obtain
$$
\bal
\| f(t) \|_{H^1_k}^2 + K \int_{t_1}^t \| f(\tau) \|_{H^2_{k + \gamma/2}}^2 \, d\tau
&\le C \int_{t_1}^t \| f(\tau) \|_{L^2_{k + \gamma/2}}^2 \, d\tau + \| f(t_1) \|_{H^1_k}^2\\
&\le C (1+t)^2,
\eal
$$
which concludes the case $\gamma \in (-3/2,0)$.

\medskip
\noindent
\textit{Case (ii): $\gamma \in (-2,-3/2]$.} 
In this case, Lemma~\ref{lem:Kalpha}-(2) implies, for any $0 < \sigma < |\gamma|$, any $\ell_0 \le -\gamma$ and $\ell_1 + \ell_2 = - \ell_0$, that
$$
\bal
T_{221} 
&\lesssim \| \la v \ra^{-\gamma} f \|_{L^1} \,
\| \la v \ra^{\gamma/2 + k} f \|_{L^2} \| \la v \ra^{\gamma/2 + k} \nabla^2 f \|_{L^2} \\
&\quad
+  \| \la v \ra^{\ell_0} f \|_{L^{\frac{3}{3+\gamma + \sigma}}} \, \| \la v \ra^{k + \ell_1} f \|_{H^{\sigma}} \| \la v \ra^{k + \ell_2} \nabla^2 f \|_{L^2}
=: A + B.
\eal
$$
The first term $A$ can be easily bounded by
\beqn\label{A}
\bal
A 
&\lesssim C(\epsilon) \| f \|_{L^1_{-\gamma}}^2 \, \| f \|_{L^2_{k+\gamma/2}}^2 + \epsilon \| f \|_{\dot H^2_{k+\gamma/2}}^2\\
&\lesssim C(\epsilon) \| f \|_{L^2_{k+\gamma/2}}^2 + \epsilon \| f \|_{\dot H^2_{k+\gamma/2}}^2,
\eal
\eeqn
for any $\epsilon>0$, and it remains to estimate the last term $B$. We choose $\sigma$ verifying $-3/2 - \gamma < \sigma$ so that $3/(3+\gamma+\sigma) < 2$. Moreover, we choose $\ell_2 = \gamma/2$ and $\ell_0 = 2 + 3\gamma/4$, which implies $\ell_1 = - 2 - 5\gamma/4$. We interpolate $L^{\frac{3}{3+\gamma + \sigma}}$ between $L^1$ and $L^2$, which yields
$$
\| \la v \ra^{2+3\gamma/4} f \|_{L^{\frac{3}{3+\gamma + \sigma}}}
\le  \| \la v \ra^{2+3\gamma/4} f \|_{L^1}^{1 + \frac23(\gamma+\sigma)}  \, 
\| \la v \ra^{2+3\gamma/4} f \|_{L^2}^{-\frac23(\gamma+\sigma)}.
$$
Since we have $-3/2 - \gamma < \sigma < - \gamma$ and $\gamma \in (-2,-3/2]$, we can choose $\sigma = 1/2$. Using the fact that $\| \la v \ra^{k - 2 - 5\gamma/4} f \|_{H^{1/2}} \lesssim \| f \|_{H^{1/2}_{k - 2 - 5\gamma/4}}$ and applying Lemma~\ref{lem:interpolation} twice, it follows
$$
\| f \|_{H^{1/2}_{k - 2 - 5\gamma/4}} 
\lesssim \| f \|_{L^2_{k - 8/3 - 11 \gamma / 6}}^{3/4} \,  \| f \|_{H^2_{k + \gamma/2}}^{1/4}.
$$
This implies, using the uniform in time bound of $\| f(t) \|_{L^1_{2+3\gamma/4}}$ and Young's inequality, the following estimate
\beqn\label{B}
\bal
B
&\lesssim \|  f \|_{L^1_{2+3\gamma/4}}^{1 + \frac23(\gamma+\sigma)}  \, 
\|  f \|_{L^2_{2+3\gamma/4}}^{-\frac23(\gamma+\sigma)} \,
\| f \|_{L^2_{k - 8/3 - 11 \gamma / 6}}^{3/4} \,  \| f \|_{H^2_{k + \gamma/2}}^{5/4}\\
&\lesssim C(\epsilon)\|  f \|_{L^1_{2+3\gamma/4}}^{1 + \frac89(2\gamma+1)}  \, 
\|  f \|_{L^2_{2+3\gamma/4}}^{-\frac89(2\gamma+1)} \,
\| f \|_{L^2_{k - 8/3 - 11 \gamma / 6}}^{2} 
+ \epsilon  \| f \|_{H^2_{k + \gamma/2}}^{2} \\
&\lesssim C(\epsilon)  
\|  f \|_{L^2_{2+3\gamma/4}}^{-\frac89(2\gamma+1)} \,
\| f \|_{L^2_{k - 8/3 - 11 \gamma / 6}}^{2} 
+ \epsilon  \| f \|_{H^2_{k + \gamma/2}}^{2}.
\eal
\eeqn
We can now come back to \eqref{eq:dgdt}. Gathering the above estimates \eqref{T1}-\eqref{T21}-\eqref{T222}-\eqref{A}-\eqref{B} and taking $\epsilon>0$ small enough, we obtain
\beqn\label{eq:H1k-bis}
\bal
\frac{d}{dt} \|  f \|_{\dot H^1_k}^2 
&\le - K \| f \|_{H^2_{k+\gamma/2}}^2 + C \| f \|_{H^1_{k+\gamma/2}}^2 + C \|  f \|_{L^2_{2+3\gamma/4}}^{-\frac89(2\gamma+1)} \,
\| f \|_{L^2_{k - 8/3 - 11 \gamma / 6}}^{2} \\
&\le - K \| f \|_{H^2_{k+\gamma/2}}^2 
+C \|  f \|_{L^2_{k+\gamma/2}}^2 + 
C \|  f \|_{L^2_{2+3\gamma/4}}^{-\frac89(2\gamma+1)} \,
\| f \|_{L^2_{k - 8/3 - 11 \gamma / 6}}^{2} ,
\eal
\eeqn
where we have used Lemma~\ref{lem:ineg}.

We fix some $t_0 >0$ and argue in a similar way as in the previous case. 
First of all, thanks to Proposition~\ref{prop:L2} there holds $\sup_{t \ge  t_0/2} \| f(t) \|_{L^2_{2+3\gamma/4}} \le C$, hence we can rewrite \eqref{eq:H1k-bis} starting from $t_0/2$ as 
\beqn\label{eq:H1k-bis2}
\bal
\frac{d}{dt} \|  f \|_{\dot H^1_k}^2 
&\le - K \| f \|_{H^2_{k+\gamma/2}}^2 
+C \|  f \|_{L^2_{k+\gamma/2}}^2 + 
C \| f \|_{L^2_{k - 8/3 - 11 \gamma / 6}}^{2} , \qquad \forall\, t \ge t_0 / 2,\\
&\le - K \| f \|_{H^2_{k+\gamma/2}}^2 
+C \|  f \|_{L^2_{k+\gamma/2}}^2 + 
C \| f \|_{L^2_{k - \gamma/2}}^{2} ,
\eal
\eeqn
using the fact that $- 8/3 - 11 \gamma / 6 \le - \gamma/2$ because $\gamma >-2$.
Since $f_0 \in L^{1}_{k - 5\gamma/4}$, we can use Proposition~\ref{prop:L2} to deduce that there is $C>0$ such that
$$
\int_{t_0/2}^{t_0} \| f(\tau) \|_{H^1_{k}}^2 \, d\tau \le  C
\quad\text{and}\quad
\| f(t) \|_{L^2_{k - \gamma/2}}^2 \le C(1+t), \quad \forall\, t \ge t_0 / 2,
$$
from which we can choose some $t_1 \in [t_0/2 , t_0]$ such that $ \| f(t_1) \|_{H^1_{k}}^2 < \infty$. Now we integrate \eqref{eq:H1k-bis2} from $t_1$ to $t$, then we obtain
$$
\bal
\| f(t) \|_{H^1_k}^2 + K \int_{t_1}^t \| f(\tau) \|_{H^2_{k + \gamma/2}}^2 \, d\tau
&\le C \int_{t_1}^t \| f(\tau) \|_{L^2_{k - \gamma/2}}^2 \, d\tau + \| f(t_1) \|_{H^1_k}^2\\
&\le C (1+t)^2,
\eal
$$
and the case $\gamma \in (-2,-3/2]$ is complete.

\medskip
\noindent
{(2)}
Let $\beta \in \N^3$ be a multi-index with $|\beta|=2$ and denote $g = \partial^\beta f$. Then $g$ satisfies
$$
\bal
\frac12\frac{d}{dt} \|  g \|_{L^2_k}^2 
&= \la Q(f,g), g \la v \ra^{2k}\ra 
+\sum_{\underset{1\le|\beta_1|\le2, 0\le |\beta_2| \le 1}{\beta_1 + \beta_2 = \beta}} C^{\beta_1}_{\beta_2}
\la Q(\partial^{\beta_1} f, \partial^{\beta_2} f), g \la v \ra^{2k} \ra \\
&=: I_1 + \sum I_2^{\beta_1,\beta_2} .
\eal
$$
For the first term, we have from Lemma~\ref{lem:Qpoly} that
$$
\bal
I_1 
&\le - K \|  g \|_{\dot H^1_{k + \gamma/2}}^2 + C \|  g\|_{L^2_{k+\gamma/2}}^2 \\
&\le - K \| \la v \ra^{\gamma/2+k} g \|_{\dot H^1}^2 + C \|  g\|_{L^2_{k+\gamma/2}}^2.
\eal
$$
For the second one, we use Lemma~\ref{lem:Q2poly} to obtain
$$
\bal
I_2^{\beta_1,\beta_2} 
&\lesssim \| \partial^{\beta_1} f \|_{L^1_{\gamma+2}} \, 
\| \partial^{\beta_2} f \|_{H^1_{k + \gamma/2 + 2}} \,
\| \partial^{\beta} f \|_{H^1_{k + \gamma/2}} \\
&\lesssim
\|  f \|_{H^{|\beta_1|}_{\gamma+4}} \, 
\|  f \|_{H^{|\beta_2| + 1}_{k + \gamma/2 + 2}} \,
\|  f \|_{H^3_{k + \gamma/2}},
\eal
$$
using H\"older's inequality. Then, denoting $l = \max(\gamma+4, k + \gamma/2 + 2)$ and using Lemma~\ref{lem:interpolation}, it follows that
$$
\bal
I_2^{\beta_1,\beta_2}
&\lesssim \|  f \|_{H^{1}_{l}} \, \|  f \|_{H^{2}_{l}} \, \| f \|_{H^3_{k + \gamma/2}}\\
&\lesssim \|  f \|_{H^{1}_{l}} \, \|  f \|_{H^{1}_{2l-k-\gamma/2}}^{1/2} \, \| f \|_{H^3_{k + \gamma/2}}^{3/2} \\
&\lesssim C_{\epsilon} \|  f \|_{H^{1}_{2l-k-\gamma/2}}^6  + \epsilon \| f \|_{H^3_{k + \gamma/2}}^{2},
\eal
$$
for any $\epsilon>0$.
Gathering the above estimates and taking $\epsilon>0$ small enough, we finally obtain the following differential inequality
\beqn\label{eq:H2k}
\bal
\frac{d}{dt} \|  f \|_{\dot H^2_k}^2 
&\le - K \| f \|_{H^3_{k+\gamma/2}}^2 + C \| f \|_{L^2_{k+\gamma/2}}^2 + C  \|  f \|_{H^{1}_{2l-k-\gamma/2}}^6 .
\eal
\eeqn
We fix some $t_0 > 0$. Since $f_0 \in L^1_{k - 7\gamma/4}$ we obtain from point (1) that
$$
\int_{t_0/2}^{t_0} \| f(\tau) \|_{H^2_{k}}^2 \, d\tau < \infty,
$$
from which we deduce that there is some $t_1 \in [t_0/2, t_0]$ such that $\| f(t_1) \|_{H^2_{k}} < \infty$. Moreover, since $f_0 \in L^1_{2l - k  - 7\gamma/4}$, we also get from point (1) that
$$
\forall\, t > t_0/2, \quad \| f(t) \|_{H^1_{2l-k-\gamma/2}}^2 \le C(1+t)^2.
$$
Coming back to \eqref{eq:H2k} and integrating from $t_1$, it yields, for any $t \ge t_1$,
$$
\bal
\| f(t) \|_{H^2_k}^2 + K \int_{t_1}^t \| f(\tau) \|_{H^3_{k + \gamma/2}}^2 \, d\tau
&\le C \int_{t_1}^t \| f(\tau) \|_{L^2_{k + \gamma/2}}^2 \, d\tau + C \int_{t_1}^t \| f(\tau) \|_{H^1_{2l-k-\gamma/2}}^6 \, d\tau + \| f(t_1) \|_{H^2_k}^2\\
&\le C \int_{t_1}^t (1+\tau) \, d\tau + C \int_{t_1}^t (1+\tau)^6 \, d\tau + C  \\
&\le C (1+t)^7,
\eal
$$
which concludes the proof.

\medskip
\noindent
{(3)}
Suppose now that $f \in L^\infty([0,\infty); L^1_l)$ for any $l \ge 0$. We recall that we obtain in Proposition~\ref{prop:L2} (see equations~\eqref{eq:dtL2k} and \eqref{eq:dtL2kbis}) the following differential inequality
$$
\bal
\frac{d}{dt} \| f \|_{L^2_k}^2 
&\le - K \| f \|_{H^1_{k + \gamma/2}}^2 + C \| f \|_{L^2_{k + \gamma/2}}^2\\
&\le - K  \| f \|_{L^1_{k-3\gamma/4}}^{-4/3} \,  \| f \|_{L^2_k}^{2 + 4/3} 
+ C \| f \|_{L^2_{k + \gamma/2}}^2.
\eal
$$
Now since $\| f(t) \|_{L^1_{k-3\gamma/4}}$ is bounded uniformly in time, arguing as in Proposition~\ref{prop:L2}, we obtain from last inequality that for any $t_1 >0$, for any $k \ge 0$, there exists $C=C(t_1)>0$ such that
\beqn\label{eq:L2unif}
\sup_{t \ge t_1} \left\{  \| f(t) \|_{L^2_k}^2  + \int_{t}^{t+1} \| f(\tau) \|_{H^1_k}^2 \, d\tau \right\}\le C. 
\eeqn

Let us now investigate the $\dot H^1_k$-norm. Coming back to \eqref{eq:H1k} if $\gamma\in (-3/2,0)$ or to \eqref{eq:H1k-bis2} if $\gamma \in (-2,-3/2]$, and using Lemma~\ref{lem:interpolation}, we get
$$
\bal
\frac{d}{dt} \| f \|_{\dot H^1_k}^2 
&\le - K  \| f \|_{H^2_{k+\gamma/2}}^{2} + C \| f \|_{L^2_{k - \gamma/2}}^2\\
&\le - K \| f \|_{L^2_{k-\gamma/2}}^{-2} \, \| f \|_{H^1_k}^{4} + C \| f \|_{L^2_{k - \gamma/2}}^2.
\eal
$$
We fix some $t_1>0$. Thanks to \eqref{eq:L2unif} we have $\sup_{t \ge t_1/2} \| f(t) \|_{L^2_l} \le C$ for any $l\ge 0$, then, arguing as before, there is $C=C(t_1)$ such that
$$
\sup_{t \ge t_1} \| f(t) \|_{H^1_k}^2 + \int_{t}^{t+1} \| f(\tau) \|_{H^2_{k}}^2 \, d\tau \le C.
$$

We conclude the proof by induction. Assume that for some integer $n \ge 2$, for any $t_1>0$ and any $k\ge 0$ we have
$$
\sup_{t \ge t_0} \| f(t) \|_{H^{n-1}_k}^2 \le C.
$$
Arguing as in point (2) we obtain that
$$
\frac{d}{dt} \| f \|_{\dot H^{n}_k}^2 
\lesssim - K \| f \|_{H^{n+1}_{k + \gamma/2}}^2 + C\| f \|_{H^n_{k + \gamma/2}}^2
+ C \sum_{\underset{1\le|\beta_1|\le n, 0\le |\beta_2| \le n-1}{|\beta_1| + |\beta_2| = n}}
I^{\beta_1,\beta_2}.
$$
where
$$
I^{\beta_1,\beta_2} \lesssim
\|  f \|_{H^{|\beta_1|}_{\gamma+4}} \, 
\|  f \|_{H^{|\beta_2| + 1}_{k + \gamma/2 + 2}} \,
\|  f \|_{H^{n+1}_{k + \gamma/2}}.
$$
If $(|\beta_1|,|\beta_2|)=(1,n-1)$ or $(|\beta_1|, |\beta_2|)=(n,0)$ then, using Lemma~\ref{lem:interpolation}, it follows
$$
\bal
I^{\beta_1,\beta_2} 
&\lesssim 
\|  f \|_{H^{1}_{l}} \, 
\|  f \|_{H^{n}_{l}} \,
\|  f \|_{H^{n+1}_{k + \gamma/2}}\\
&\lesssim
\|  f \|_{H^{1}_{l}} \, 
\|  f \|_{H^{n-1}_{2l}}^{1/2} \,
\|  f \|_{H^{n+1}_{k + \gamma/2}}^{3/2} 
\lesssim
C_\epsilon + \epsilon \|  f \|_{H^{n+1}_{k + \gamma/2}}^2,
\eal
$$
for any $\epsilon>0$, using the induction hypothesis. In all the other cases, $2 \le |\beta_1| \le n-1$ and $1 \le |\beta_2| \le n-2$, we get
$$
\bal
I^{\beta_1,\beta_2} 
&\lesssim 
\|  f \|_{H^{n-1}_{l}}^2 \, 
\|  f \|_{H^{n+1}_{k + \gamma/2}}
\lesssim
C_\epsilon + \epsilon \|  f \|_{H^{n+1}_{k + \gamma/2}}^2.
\eal
$$
Taking $\epsilon>0$ small enough and iterating Lemma~\ref{lem:interpolation}, we obtain the differential inequality, for some $l, \eta >0$, 
$$
\bal
\frac{d}{dt} \| f \|_{H^{n}_k}^2 
&\lesssim - K \| f \|_{H^{n+1}_{k + \gamma/2}}^2 + C\| f \|_{H^n_{k + \gamma/2}}^2 + C \\
&\lesssim - K \| f \|_{L^2_{l}}^{-\eta} \, \| f \|_{H^n_k}^{2 + \eta} + C\| f \|_{H^n_{k + \gamma/2}}^2 +C,\\
&\lesssim - K \| f \|_{H^n_k}^{2 + \eta} + C,
\eal
$$
using the induction hypothesis, from which it follows that for any $t_1>0$ and any $k\ge 0$ there exists $C=C(t_1)>0$ such that
$$
\sup_{t \ge t_1} \| f(t) \|_{H^n_k} \le C.
$$
\end{proof}

\section{Convergence to equilibrium}\label{sec:conv}

\subsection{Polynomial in time convergence}\label{ssec:poly}

Toscani and Villani \cite{TosVi-slow} have proved a polynomial rate in the trend to equilibrium for the Landau equation for \emph{mollified soft potentials}, i.e. 
replacing the function $a(z) = |z|^{\gamma+2} \Pi(z)$ by a mollified version truncating the singularity at the origin, given by
$$
\widetilde a(z) = \widetilde\Psi(z) \Pi(z)
\qquad\text{with}\qquad
c_\Psi \la z \ra^\gamma \le \frac{\widetilde\Psi(z)}{|z|^2} \le C_\Psi \la z \ra^{\gamma},
$$
for some constants $c_\Psi, C_\psi >0$.
Their strategy was based on two ingredients: a functional inequality relating the entropy and the entropy dissipation functional stated in Lemma~\ref{lem:D<H} (which is also valid in our case of \emph{true} soft potentials), and a priori estimates for the evolution of moments and weighted Sobolev norms for the Landau equation associated with $\widetilde a(z)$ (which of course do not hold in our case and we shall use the new a priori estimates proven in Section~\ref{sec:apriori}).

The entropy - entropy dissipation inequality is given in the following result.

\begin{lem}[{\cite[Proposition 4]{TosVi-slow}}]\label{lem:D<H}
Let $a^{\dag}(z) = \Psi^{\dag}(z) \Pi(z)$ where $\Psi^{\dag}$ verifies
$$
c_{\Psi^{\dag}} \la z \ra^\gamma \le \frac{\Psi^{\dag}(z)}{|z|^2},
$$
for some constant $c_{\Psi^{\dag}} >0$, 
and consider the associated entropy-dissipation functional
$$
D_{a^{\dag}} = \frac12 \iint \Psi^{\dag}(|v-v_*|) \left| \Pi(v-v_*) \left( \frac{\nabla f}{f}-\frac{\nabla f_*}{f_*}   \right)   \right|^2 f_* f \, dv_* \, dv.
$$
Then, for all $k >0$ and all $f$ satisfying \eqref{f0}, there is $C_k (f) >0$ depending on $k$ and $H(f)$ such that
$$
D_{a^{\dag}} (f) \ge C_k(f) \, H(f | \mu )^{1 - \gamma/k} \, \{ \| f \|_{L^1_{k+2}} + J_{k+2}(f)  \}^{\gamma/k}
$$
where
$$
J_{k+2}(f) = \| \la v \ra^{k+2} \nabla \sqrt f \|_{L^2}^2.
$$
\end{lem}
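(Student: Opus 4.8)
The plan is to reprove the inequality following Toscani and Villani~\cite{TosVi-slow}: one truncates the kernel in the variable $v-v_*$ so as to reduce the soft-potential entropy dissipation to the over-Maxwellian one, and then applies the entropy--entropy dissipation estimate of Desvillettes and Villani~\cite{DesVi2} for the over-Maxwellian Landau operator. Since the right-hand side vanishes unless $\| f \|_{L^1_{k+2}}$ and $J_{k+2}(f)$ are finite, we may assume $f$ smooth, positive, with finite mass, energy, entropy and these two quantities. Bounding $\Psi^{\dag}$ from below pointwise, $\Psi^{\dag}(z) \ge c_{\Psi^{\dag}} \la z \ra^{\gamma} |z|^2$, gives $D_{a^{\dag}}(f) \ge c_{\Psi^{\dag}} D_\gamma(f)$, where
\[
D_\gamma(f) := \frac12 \iint \la v-v_* \ra^{\gamma} |v-v_*|^2 \left| \Pi(v-v_*)\left( \frac{\nabla f}{f} - \frac{\nabla f_*}{f_*} \right) \right|^2 f f_* \, dv_* \, dv ,
\]
and we let $D_0(f)$ denote the entropy dissipation associated with the over-Maxwellian kernel $|z|^2 \Pi(z)$ (i.e.\ $\gamma=0$).

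For $R\ge 1$ we have $\la v-v_* \ra^{\gamma} \ge (1+R^2)^{\gamma/2}$ on $\{|v-v_*|\le R\}$, so discarding in $D_\gamma(f)$ the region $\{|v-v_*|>R\}$ yields $D_\gamma(f) \ge (1+R^2)^{\gamma/2} ( D_0(f) - E_R(f) )$, where $E_R(f) := \frac12 \iint_{|v-v_*|>R} |v-v_*|^2 |\Pi(v-v_*)(\nabla f/f - \nabla f_*/f_*)|^2 f f_* \, dv_* \, dv$ is the complementary piece of $D_0(f)$. To estimate $E_R(f)$ one uses that $\{|v-v_*|>R\}\subset\{|v|>R/2\}\cup\{|v_*|>R/2\}$, that $|\Pi(z)\xi|\le|\xi|$, that $|v-v_*|^2 \lesssim \la v \ra^2 \la v_* \ra^2$ and $\nabla f/f = 2\nabla\sqrt f/\sqrt f$, the elementary inequality $\Indiq{|v|>R/2} \la v \ra^2 \lesssim R^{-k} \la v \ra^{k+2}$, and the symmetry $v\leftrightarrow v_*$, which leads to
\[
E_R(f) \lesssim R^{-k} \| f \|_{L^1_{k+2}} J_{k+2}(f) \lesssim R^{-k} \left( \| f \|_{L^1_{k+2}} + J_{k+2}(f) \right)^2 ;
\]
tracking the weights more carefully, as in \cite{TosVi-slow}, one can in fact keep the bracket to the first power.

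Now choose $R=R_*\ge1$ so that $E_{R_*}(f)\le\frac12 D_0(f)$, which amounts to $R_*^{k} \sim (\| f \|_{L^1_{k+2}}+J_{k+2}(f))/D_0(f)$ (if this ratio is $\le1$ then $R_*=1$, $D_0(f)$ already dominates the bracket, and the conclusion follows directly). Since $R_*\ge1$ we have $(1+R_*^2)^{\gamma/2} \gtrsim R_*^{\gamma}$, hence
\[
D_\gamma(f) \ge \tfrac12 (1+R_*^2)^{\gamma/2} D_0(f) \gtrsim R_*^{\gamma} D_0(f) \gtrsim D_0(f)^{1-\gamma/k} \left( \| f \|_{L^1_{k+2}} + J_{k+2}(f) \right)^{\gamma/k} .
\]
It remains to invoke the entropy--entropy dissipation estimate of Desvillettes--Villani for the over-Maxwellian operator, $D_0(f) \ge c_0\, H(f|\mu)$ with $c_0>0$ depending only on $H(f)$; since $1-\gamma/k>0$ this gives $D_0(f)^{1-\gamma/k} \gtrsim H(f|\mu)^{1-\gamma/k}$, and combining with the last display yields the asserted bound.

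The main obstacle is twofold. First, the weighted tail bound of the second paragraph must be carried out so that the error is controlled by $\| f \|_{L^1_{k+2}}+J_{k+2}(f)$ with exactly the power $R^{-k}$ of the truncation radius, since it is this exponent $k$ that produces, after optimisation, the exponents $1-\gamma/k$ and $\gamma/k$ in the final inequality. Second, the over-Maxwellian entropy--entropy dissipation estimate is itself a nontrivial theorem; in its sharpest known form it carries an arbitrarily small loss, $D_0(f) \ge c_\epsilon(f)\, H(f|\mu)^{1+\epsilon}$ for every $\epsilon>0$, which then has to be absorbed into $C_k(f)$ by a suitable choice of the free parameters so that the displayed exponent is exactly $1-\gamma/k$.
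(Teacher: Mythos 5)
The paper itself gives no proof of this lemma: it is quoted verbatim from Toscani--Villani \cite{TosVi-slow}, so your reconstruction can only be judged on its own terms. Your skeleton (bound $\Psi^\dag$ below by $c\la z\ra^\gamma|z|^2$, truncate at $|v-v_*|\le R$, compare with the over-Maxwellian dissipation $D_0$, optimise in $R$, then invoke the Desvillettes--Villani linear inequality $D_0(f)\ge c_0(f)\,H(f|\mu)$) is indeed the right spirit. But there is a genuine gap at the decisive step, and you acknowledge it only by asserting it away: what you actually prove is $E_R(f)\lesssim R^{-k}\,\|f\|_{L^1_{k+2}}\,J_{k+2}(f)$, i.e.\ the \emph{product} (equivalently the square of the bracket), and the claim that ``tracking the weights more carefully one can keep the bracket to the first power'' is not a routine bookkeeping matter --- it is exactly where the exponent $\gamma/k$ (rather than $2\gamma/k$) comes from. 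With your estimate the optimal choice is $R_*^k\sim \|f\|_{L^1_{k+2}}J_{k+2}(f)/D_0(f)$ and the conclusion is only $D_{a^\dag}(f)\gtrsim H(f|\mu)^{1-\gamma/k}\{\|f\|_{L^1_{k+2}}+J_{k+2}(f)\}^{2\gamma/k}$, which is strictly weaker and would spoil the rate obtained through Corollary~\ref{cor:rate} and Theorem~\ref{thm:main0}. Moreover, within your scheme the product cannot be avoided: after symmetrisation the region $\{|v_*|>R/2\}$ produces terms of the form $R^{-k}\|f\|_{L^1_{k+2}}\,J_2(f)$ or $R^{-k}\|f\|_{L^1_{k+2}}\,I(f)$, and neither $J_2(f)$ nor the Fisher information $I(f)$ is controlled by the conserved quantities; a function made of a core with strong oscillations (large $I$) plus a far, slowly varying low-mass tail (large $(k{+}2)$-moment, negligible weighted gradient) shows the tail functional genuinely behaves like the product. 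So a new ingredient --- whatever \cite{TosVi-slow} actually use at this point --- is needed, not just sharper weight accounting.

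A second, smaller problem is your closing remark on the over-Maxwellian input. For the Landau operator with $\Psi\ge\lambda|z|^2$, Desvillettes--Villani \cite{DesVi2} prove the \emph{linear} inequality $D_0(f)\ge \lambda\,c(f)\,H(f|\mu)$ with $c(f)$ depending only on mass, energy and an upper bound on the entropy (the $\eps$-loss you mention is a feature of the Boltzmann case); and if the only available bound were $D_0\gtrsim H^{1+\eps}$, it could \emph{not} be ``absorbed into $C_k(f)$'': a lower bound of the form $H^{1+\eps}$ cannot be upgraded to the exact exponent by adjusting constants, because $H(f|\mu)$ may be arbitrarily small. As written, the argument both relies on and casts doubt on its key input; you should simply cite the linear Landau inequality of \cite{DesVi2} and then concentrate the effort on the weighted tail estimate, which is the real content of the lemma.
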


As a consequence of this functional inequality and the fact that
$$
\frac{d}{dt} H(f | \mu) = - D(f),
$$
where $H(f | \mu) = \int f \log(f / \mu)$ is the relative entropy of $f$ with respect to $\mu$, we get the following result.

\begin{cor}[{\cite[Corollary 4.1]{TosVi-slow}}]\label{cor:rate}
If for some $k>0$ we have
$$
\| f(t)\|_{L^1_{k+2}} + J_{k+2}(f(t)) \le C (1+t)^\theta, \qquad \theta < \frac{k}{|\gamma|},
$$
then the following estimate holds true
$$
H(f(t) | \mu ) \le C (1+t)^{-\frac{k}{|\gamma|} + \theta}.
$$
\end{cor}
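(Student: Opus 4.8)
The plan is to combine the entropy dissipation identity with the functional inequality of Lemma~\ref{lem:D<H} and then integrate the resulting differential inequality. First I would note that the genuine Landau potential $a(z) = |z|^{\gamma+2}\Pi(z)$ is itself admissible in Lemma~\ref{lem:D<H}: taking $\Psi^{\dag}(z) = |z|^{\gamma+2}$ we have $\Psi^{\dag}(z)/|z|^2 = |z|^\gamma \ge \la z \ra^\gamma$ for $\gamma < 0$, so the hypothesis holds with $c_{\Psi^{\dag}} = 1$ and $D_{a^{\dag}} = D$. Thus, writing $h(t) := H(f(t)\,|\,\mu)$, Lemma~\ref{lem:D<H} gives, for every $k > 0$,
\[
D(f(t)) \ge C_k(f(t))\, h(t)^{1 + |\gamma|/k}\, \bigl\{ \|f(t)\|_{L^1_{k+2}} + J_{k+2}(f(t)) \bigr\}^{-|\gamma|/k}.
\]
A point that must be checked is that $C_k(f(t))$ may be taken \emph{independent of} $t$: it depends only on $k$ and on $H(f(t))$, and since mass and energy are conserved while $H(f(t)) \le H(f_0)$, the entropy $H(f(t))$ remains in a fixed bounded interval, whence $C_k(f(t)) \ge c_k > 0$ for all $t \ge 0$.

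Next I would insert the hypothesis $\|f(t)\|_{L^1_{k+2}} + J_{k+2}(f(t)) \le C(1+t)^\theta$. Since the exponent $-|\gamma|/k$ is negative, this upper bound becomes a \emph{lower} bound for the last factor, $\{\cdots\}^{-|\gamma|/k} \ge c(1+t)^{-\theta|\gamma|/k}$. Combining with $\frac{d}{dt}h = -D(f)$ --- more precisely, for weak solutions, with the inequality $h(t) \le h(s) - \int_s^t D(f(\tau))\,d\tau$ coming from property ($iii$) in the definition of weak solution --- yields the Bernoulli-type differential inequality
\[
\frac{d}{dt}h(t) \le -\,c\, h(t)^{1 + |\gamma|/k}\,(1+t)^{-\theta|\gamma|/k}
\]
(to be read in integrated form). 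If $h(t_0) = 0$ for some $t_0 \ge 0$ then $f \equiv \mu$ and there is nothing to prove, so we may assume $h > 0$.

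Finally I would solve this inequality. Setting $\alpha := |\gamma|/k > 0$, dividing by $h^{1+\alpha}$ and integrating from $0$ gives $\frac{d}{dt}\bigl(h^{-\alpha}\bigr) \ge c\alpha(1+t)^{-\theta\alpha}$, hence
\[
h(t)^{-\alpha} \ge h(0)^{-\alpha} + c\alpha \int_0^t (1+\tau)^{-\theta\alpha}\, d\tau .
\]
The hypothesis $\theta < k/|\gamma|$ is precisely what guarantees $\theta\alpha < 1$, so the integral is comparable to $(1+t)^{1-\theta\alpha}$ for large $t$; therefore $h(t)^{-\alpha} \gtrsim (1+t)^{1-\theta\alpha}$ and
\[
h(t) \le C(1+t)^{-(1-\theta\alpha)/\alpha} = C(1+t)^{-k/|\gamma|+\theta},
\]
using $(1-\theta\alpha)/\alpha = 1/\alpha - \theta = k/|\gamma| - \theta$. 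The main obstacles are conceptual rather than computational: verifying the uniform-in-time lower bound for $C_k(f(t))$, and justifying the pointwise differential inequality from the integrated entropy inequality for weak solutions (a routine comparison with the ODE $\dot y = -c\,y^{1+\alpha}(1+t)^{-\theta\alpha}$); the exponent bookkeeping, although it is the reason the rate takes exactly this form, is otherwise straightforward.
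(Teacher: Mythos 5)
Your argument is correct and is essentially the proof the paper relies on (it cites Toscani--Villani and simply notes that Lemma~\ref{lem:D<H} applies to the true soft potential since $|z|^\gamma \ge \la z \ra^\gamma$ for $\gamma<0$, then combines it with $\frac{d}{dt}H(f|\mu)=-D(f)$): plugging the growth hypothesis into the entropy--entropy dissipation inequality and integrating the resulting Bernoulli-type inequality with $\alpha=|\gamma|/k$, $\theta\alpha<1$, yields exactly the rate $(1+t)^{-k/|\gamma|+\theta}$. Your two flagged caveats (uniform-in-time lower bound on $C_k(f(t))$ via the conserved moments and $H(\mu)\le H(f(t))\le H(f_0)$, and passing from the integrated entropy inequality for weak solutions to the ODE comparison) are the right ones and are handled the same way in the cited reference.
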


In order to estimate the evolution of the quantity $J_{k+2}(f(t))$, it is proven in \cite{TosVi-slow} that this quantity can be reduced to weighted Sobolev norms. More precisely, they first prove that
$$
J_{k+2}(f) \lesssim I({\la v \ra^{k+2} f} )  + \| f \|_{L^1_k}
$$
where $I(g)$ is the Fisher information define by
$$
I(g) := \int \frac{|\nabla g|^2}{g} = 4 \int |\nabla \sqrt g|^2.
$$
Finally they prove the following inequality in \cite[Lemma 1]{TosVi-slow}: for any $\eps >0$ there is $C_\eps >0$ such that
$$
I(g) \le C_\epsilon \, \| g \|_{H^2_{3/2 + \epsilon}},
$$
so that at the end we get
$$
\| f \|_{L^1_{k+2}} + J_{k+2}(f) \lesssim \| f \|_{H^2_{k+7/2 + \epsilon}}.
$$

\medskip

Now we are in position to prove the polynomial in time convergence in Theorem~\ref{thm:main0}.

\begin{proof}[Proof of Theorem \ref{thm:main0}]
This theorem is a consequence of Proposition~\ref{prop:H12n} and Corollary~\ref{cor:rate}.
Indeed, remark that Lemma~\ref{lem:D<H} also holds in our case of true soft potentials with $a(z)=|z|^{\gamma+2}\Pi(z)$ given by \eqref{eq:aij}. Then since $f_0 \in L^1_{k + 8 -3\gamma/4} \cap L \log L$ with $k > 7 |\gamma| /2$, the a priori estimate in Proposition~\ref{prop:H12n}-(2) (here one should use approximate solutions of the Landau equation as in \cite{Vi2} in order to give a completely rigorous proof) implies that for any $t_0>0$ it holds
$$
\| f(t) \|_{H^2_{k+4}} \le C (1+t)^{7/2}.
$$
We conclude the proof applying Corollary~\ref{cor:rate}.
\end{proof}

\bigskip

As a consequence of Theorem~\ref{thm:main0} we can improve the slowly increasing a priori bounds for $L^1$ moments in Lemmas~\ref{lem:moments} and \ref{lem:momentsexp}, 
obtaining uniform in time estimates, as done in \cite{DesMo} for the Boltzmann equation.

\begin{prop}\label{prop:momentuniform}
Let $\gamma \in (-2,0)$ and $f_0 \in L^1_2 \cap L \log L$. Consider a global weak solution $f \in L^\infty ([0,\infty); L^1_2 \cap L \log L)$ to the Landau equation. 
\begin{enumerate}[(1)]

\item Suppose that 
$f_0 \in L^1_{2\ell} \cap  L^1_{k+8-3\gamma/4}$ with $\ell>2$ and $k > 11 |\gamma| /2$. Then
$$
\sup_{t\ge 0} \| f(t) \|_{L^1_\ell} \le C.
$$

\item Suppose that 
$f_0 \in L^1(e^{2 \kappa \la v \ra^s}) $ with $\kappa >0$, $0<s<2$ with $s < \gamma + 2$. Then we have
$$
\sup_{t\ge 0} \| f(t) \|_{L^1(e^{\kappa \la v \ra^s})} \le C.
$$
\end{enumerate}

\end{prop}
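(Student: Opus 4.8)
The plan is to obtain the uniform-in-time bounds by combining three ingredients already in hand: the polynomial convergence of Theorem~\ref{thm:main0}, the Csisz\'ar--Kullback--Pinsker inequality, and the at most linearly growing moment bounds of Lemmas~\ref{lem:moments} and~\ref{lem:momentsexp}, following the scheme of \cite{DesMo}. The mechanism is that $f(t)$ approaches $\mu$ in $L^1$ at a polynomial rate while all weighted norms of $f(t)$ grow at most linearly in time, and $\mu$ has finite moments of every order; interpolating the decay against the growth then yields a time-uniform bound as soon as the decay is fast enough. Concretely, since $f_0\in L\log L\cap L^1_{k+8-3\gamma/4}$ with $k>7|\gamma|/2$ (automatic in case (2), where $f_0$ has all polynomial moments), Theorem~\ref{thm:main0} together with CKP gives
$$
\| f(t)-\mu\|_{L^1}\le\sqrt{2\,H(f(t)\,|\,\mu)}\le C(1+t)^{-a},\qquad a:=\tfrac12\Big(\tfrac{k}{|\gamma|}-\tfrac72\Big)>0,
$$
and one checks that the hypothesis $k>11|\gamma|/2$ is precisely the requirement $a>1$. (One applies Theorem~\ref{thm:main0} to the given weak solution, which is licit by the uniqueness result quoted in the introduction under the present moment assumptions, or one runs the a priori estimates of Section~\ref{sec:apriori} along an approximating sequence.)

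For part~(1) I would write $\| f(t)\|_{L^1_\ell}\le\|\mu\|_{L^1_\ell}+\| f(t)-\mu\|_{L^1_\ell}$ and interpolate the difference by Cauchy--Schwarz,
$$
\| f(t)-\mu\|_{L^1_\ell}\le\| f(t)-\mu\|_{L^1}^{1/2}\,\| f(t)-\mu\|_{L^1_{2\ell}}^{1/2},
$$
bounding the second factor by $\| f(t)\|_{L^1_{2\ell}}+\|\mu\|_{L^1_{2\ell}}\le C(1+t)$ via Lemma~\ref{lem:moments} (this is where $f_0\in L^1_{2\ell}$ enters) and the first factor by the decay estimate above. This gives $\| f(t)\|_{L^1_\ell}\le C+C(1+t)^{(1-a)/2}$, which is uniformly bounded precisely because $a>1$.

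For part~(2) the extra step is a time-dependent truncation. For $t\ge1$ and a radius $R=R(t)$, I would split
$$
\| f(t)-\mu\|_{L^1(e^{\kappa\la v\ra^s})}\le e^{\kappa\la R\ra^s}\| f(t)-\mu\|_{L^1}+\int_{|v|>R}|f(t)|\,e^{\kappa\la v\ra^s}+\int_{|v|>R}\mu\,e^{\kappa\la v\ra^s}.
$$
On $\{|v|>R\}$ one uses $e^{\kappa\la v\ra^s}\le e^{-\kappa\la R\ra^s}e^{2\kappa\la v\ra^s}$ together with Lemma~\ref{lem:momentsexp} applied with $2\kappa$ in place of $\kappa$ (valid since $s<\gamma+2$) for the second term, and the super-Gaussian decay of $\mu$ (valid since $s<2$) for the third; choosing $R(t)$ by $\kappa\la R\ra^s=\log(1+t)$ turns the three terms into $C(1+t)^{1-a}$, $C$, and $Ce^{-c(\log(1+t))^{2/s}}$, all bounded in $t\ge1$ because $a>1$ and $2/s>1$. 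The range $t\in[0,1]$ is handled directly by Lemma~\ref{lem:momentsexp}, and adding $\|\mu\|_{L^1(e^{\kappa\la v\ra^s})}<\infty$ concludes.

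I do not expect a genuine analytic difficulty here: given Theorem~\ref{thm:main0}, both parts reduce to interpolation and exponent bookkeeping. The one point needing care is the choice of cut-off $R(t)\sim(\log(1+t))^{1/s}$ in part~(2): it must defeat the linear-in-time growth of the stretched exponential moment while cashing in the Gaussian tail of $\mu$ — which is why $s<2$ is essential — and it forces the polynomial decay rate $a$ to be at least $1$, which is the reason the integrability demanded of $f_0$ (i.e.\ the size of $k$, and hence the assumptions $f_0\in L^1_{2\ell}$ resp.\ $f_0\in L^1(e^{2\kappa\la v\ra^s})$) is as strong as it is.
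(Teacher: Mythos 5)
Your proposal is correct, and its overall mechanism (polynomial $L^1$-decay from Theorem~\ref{thm:main0} via Csisz\'ar--Kullback--Pinsker, interpolated against the linearly growing strong moments of Lemmas~\ref{lem:moments} and~\ref{lem:momentsexp}) is exactly the paper's. Part (1) coincides with the paper's proof line by line: triangle inequality, Cauchy--Schwarz $\| f-\mu \|_{L^1_\ell}\le \| f-\mu\|_{L^1}^{1/2}\| f-\mu\|_{L^1_{2\ell}}^{1/2}$, and the same exponent bookkeeping identifying $k>11|\gamma|/2$ with decay rate $a>1$ (the paper leaves CKP implicit; you make it explicit). For part (2) you diverge: the paper simply repeats the same one-line Cauchy--Schwarz step with the weight split $e^{\kappa\la v\ra^s}=\bigl(e^{2\kappa\la v\ra^s}\bigr)^{1/2}$, i.e. $\| f-\mu\|_{L^1(e^{\kappa\la v\ra^s})}\le \| f-\mu\|_{L^1}^{1/2}\,\| f-\mu\|_{L^1(e^{2\kappa\la v\ra^s})}^{1/2}$, then invokes Lemma~\ref{lem:momentsexp} with $2\kappa$ and Theorem~\ref{thm:main0} with $k$ arbitrarily large; your time-dependent truncation at $\kappa\la R(t)\ra^s=\log(1+t)$ reaches the same conclusion from the same inputs and the same threshold $a\ge1$, so it buys nothing in hypotheses but is a perfectly sound alternative. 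Two small points of care: your defining relation for $R(t)$ has no solution when $\log(1+t)<\kappa$, so the ``directly bounded'' initial interval should be $[0,T_0]$ with $T_0=\max(1,e^{\kappa}-1)$ (or replace the definition by $e^{\kappa\la R\ra^s}=e^{\kappa}(1+t)$), a cosmetic fix; and your appeal to uniqueness to apply Theorem~\ref{thm:main0} to the given weak solution is not fully covered in case (1), since the criterion of \cite{FG} requires $k>\gamma^2/(2+\gamma)$, which the hypotheses of (1) do not guarantee for $\gamma$ close to $-2$ --- but the paper works at the same formal level (``at least formally'', or along approximating solutions), so your fallback via the a priori estimates on an approximating sequence is the appropriate one.
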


\begin{proof}
(1) We write, using Lemma~\ref{lem:moments} and Theorem~\ref{thm:main0}
$$
\bal
\| f(t) \|_{L^1_\ell} 
&\le \| f(t) - \mu \|_{L^1_\ell} + \| \mu \|_{L^1_\ell} \\
&\le  \| f(t) - \mu \|_{L^1}^{1/2} \, \| f(t) - \mu \|_{L^1_{2\ell}}^{1/2} + C \\
&\le C (1+t)^{-\frac{k}{4|\gamma|} + \frac78} (1+t)^{\frac12} + C \\
&\le C.
\eal
$$

\medskip
(2) Using Lemma~\ref{lem:momentsexp} and Theorem~\ref{thm:main0}, for some $k> 0$ large enough we have
$$
\bal
\| f(t) \|_{L^1(e^{\kappa \la v \ra^s})} 
&\le \| f(t) - \mu \|_{L^1(e^{\kappa \la v \ra^s})} + \| \mu \|_{L^1(e^{2\kappa \la v \ra^s})} \\
&\le  \| f(t) - \mu \|_{L^1}^{1/2} \, \| f(t) - \mu \|_{L^1(e^{2\kappa \la v \ra^s})}^{1/2} + C \\
&\le C (1+t)^{-\frac{k}{4|\gamma|} + \frac78} (1+t)^{1/2} + C \\
&\le C.
\eal
$$
\end{proof}

\subsection{Exponential in time convergence}\label{ssec:exp}

We are able now to conclude the proof of Theorem~\ref{thm:main}. Recall that in this setting we suppose $\gamma \in (-1,0)$ and $f_0 \in  L \log L \cap L^1(e^{\kappa \la v \ra^s})$ with $\kappa >0$ and $ - \gamma < s < 2 + \gamma$. Let us denote $m = e^{\bar \kappa \la v \ra^s}$ with $\bar \kappa \le \kappa/10$, which satisfies assumption (W).

We write $h(t) = f(t) - \mu$ that satisfies
$$
\left\{
\bal
& \partial_t h = \LL h + Q(h,h) \\
& h_{|t=0} = h_0.
\eal
\right.
$$
Since $\Pi_0 h_0 = 0$ and $\Pi_0 Q(h_0,h_0)= 0 $, for all $t\ge 0$, we also have $\Pi_0 h(t) = 0$ 
and $\Pi_0 Q(h(t), h(t))=0$, thanks to the conservation laws. By Duhamel's principle it follows
\beqn\label{eq:duhamel}
h(t) = S_{\LL}(t) h_0 + \int_0^t S_{\LL}(t-\tau) Q(h(\tau), h(\tau)) \, d\tau.
\eeqn

Before starting the proof of the main theorem, let us state a result that will be useful in the sequel.

\begin{lem}\label{lem:Q-Lpm}
Let $\gamma\in (-2,0)$, $p\in [1,+\infty)$ and $m$ be a weight function. 
Then, if $1 \le q < 3/|\gamma|$, it holds
$$
\|  Q(g,f) \|_{L^p(m)}
\lesssim  
\| g\|_{L^{1}(\la v\ra^{\gamma+2})} \, \| \nabla^2 f\|_{ L^{p}(m\la v\ra^{\gamma+2})} 
+ \| g \|_{L^1} \, \| f \|_{L^p(m)}
+ \| g\|_{L^{q/(q-1)}} \, \| f \|_{L^{p}(m)}.
$$
\end{lem}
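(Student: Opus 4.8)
The plan is to estimate $Q(g,f)$ directly from its non-divergence form $Q(g,f)=(a_{ij}*g)\,\partial_{ij}f-(c*g)\,f$, treating the two terms separately in $L^p(m)$, much as in the proof of Lemma~\ref{lem:A0}. Since $m$ only multiplies $f$ and its derivatives and factors through every estimate, no assumption on the weight beyond positivity, and no constraint on $p\in[1,\infty)$, will be needed.

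For the diffusion term I would use the pointwise bound $|a_{ij}(v-v_*)|\le|v-v_*|^{\gamma+2}\le C\la v\ra^{\gamma+2}\la v_*\ra^{\gamma+2}$, which holds because $0<\gamma+2<2$, to obtain $|(a_{ij}*g)(v)|\le C\la v\ra^{\gamma+2}\,\|g\|_{L^1(\la v\ra^{\gamma+2})}$. Multiplying by $m(v)\,|\partial_{ij}f(v)|$ and taking the $L^p$ norm then yields
$$
\|(a_{ij}*g)\,\partial_{ij}f\|_{L^p(m)}\lesssim \|g\|_{L^1(\la v\ra^{\gamma+2})}\,\|\nabla^2 f\|_{L^p(m\la v\ra^{\gamma+2})},
$$
which is exactly the first term in the claimed estimate.

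For the term $(c*g)\,f$ I would split $c=c_++c_-$ with $c_+=c\,\mathbf{1}_{|\cdot|>1}$ and $c_-=c\,\mathbf{1}_{|\cdot|\le1}$, following the proof of Lemma~\ref{lem:A0}$(i)$. Since $\gamma<0$ one has $|c_+(z)|\le C$ for $|z|>1$, hence $\|c_+*g\|_{L^\infty}\lesssim\|g\|_{L^1}$, and therefore $\|(c_+*g)\,f\|_{L^p(m)}\lesssim\|g\|_{L^1}\,\|f\|_{L^p(m)}$. For the local singular part, the hypothesis $q<3/|\gamma|$ is precisely what ensures $c_-\in L^q(\R^3)$, since $\int_{|z|\le1}|z|^{\gamma q}\,dz<\infty$; Young's convolution inequality then gives $\|c_-*g\|_{L^\infty}\le\|c_-\|_{L^q}\,\|g\|_{L^{q/(q-1)}}\lesssim\|g\|_{L^{q/(q-1)}}$, whence $\|(c_-*g)\,f\|_{L^p(m)}\lesssim\|g\|_{L^{q/(q-1)}}\,\|f\|_{L^p(m)}$. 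Adding the three contributions gives the lemma.

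The argument is essentially routine; the only place where the hypotheses genuinely enter — and hence the main (mild) obstacle — is the treatment of the local singular part $c_-$ of $c$, where one must invoke $q<3/|\gamma|$ to place $c_-$ in $L^q$ and then use Young's inequality to convolve it against $g\in L^{q/(q-1)}$. The diffusion term and the far part $c_+$ are handled by elementary pointwise bounds.
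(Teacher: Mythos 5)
Your proposal is correct and follows essentially the same route as the paper: the pointwise bound $|a_{ij}(v-v_*)|\lesssim \la v\ra^{\gamma+2}\la v_*\ra^{\gamma+2}$ for the diffusion term, and the split $c=c_++c_-$ with $\|c_+*g\|_{L^\infty}\lesssim\|g\|_{L^1}$ and the singular part controlled via $q<3/|\gamma|$ (your endpoint Young inequality is exactly the H\"older step used in the paper). No gaps.
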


\begin{proof}
Since $\gamma\in (-2,0)$, using $|v-v_*|^{\gamma+2} \lesssim \la v_* \ra^{\gamma+2} \la v \ra^{\gamma+2}$ we easily obtain
\beqn\label{eq:mouT1}
\bal
\| (a_{ij}*g) \partial_{ij} f \|_{L^p(m)}^p 
& \lesssim \int_v \left| \int_{v_*} |v-v_*|^{\gamma+2} g_* \, dv_* \right|^p \, |\partial_{ij} f(v)|^p \, m^p \, dv \\
&\lesssim \|  g\|_{L^1(\la v\ra^{\gamma+2})}^p  \| \nabla^2 f\|_{ L^{p}(m\la v\ra^{\gamma+2})}^p.
\eal
\eeqn
Now let us denote $c_- = c \Indiq{|\cdot| \le 1}$ and $c_+ = c \Indiq{|\cdot| > 1}$.
We can also obtain
$$
\bal
\|(c_+ * g)  f \|_{L^p(m)}^p 
&\lesssim \int \left| \int |v-v_*|^\gamma \Indiq{|v-v_*|>1} \, |g_*| \, dv_* \right|^p |f|^p \, m^p \, dv \\
&\lesssim \| g \|_{L^1}^p \, \| f \|_{L^p(m)}^p,
\eal
$$
and
$$
\bal
\|(c_- * g)  f \|_{L^p(m)}^p 
&\lesssim \int \left| \int |v-v_*|^\gamma \Indiq{|v-v_*| \le 1} \, |g_*| \, dv_* \right|^p |f|^p \, m^p \, dv \\
&\lesssim \int \left\{ \left(\int |v-v_*|^{\gamma q} \Indiq{|v-v_*| \le 1} \, dv_* \right)^{1/q} \left( \int|g_*|^{q'} \, dv_* \right)^{1/q'} \right\}^p |f|^p \, m^p \, dv \\
&\lesssim \| g \|_{L^{q'}}^p \, \| f \|_{L^p(m)}^p
\eal
$$
using H\"older's inequality and if $1 \le q < 3/|\gamma|$.
\end{proof}

\begin{proof}[Proof of Theorem \ref{thm:main}]
We split the proof into several steps.

\medskip
\noindent
\textit{Step 1.}
Since $f_0 \in L^1_2 \cap L \log L \cap L^1(e^{\kappa \la v \ra^s})$ we can apply Theorem~\ref{thm:main0} that implies
\beqn\label{eq:poly-conv}
\forall\, t\ge 0, \quad
\| h(t) \|_{L^1} = \| f(t) - \mu \|_{L^1} \le C (1+t)^{-\theta}, \quad \forall\, \theta>0.
\eeqn
Moreover we get, using Lemma~\ref{lem:momentsexp},
$$
\bal
\| h(t) \|_{L^1(e^{\frac{\kappa}{2} \la v \ra^s})} 
&\le \| h(t) \|_{L^1} \, \| h(t) \|_{L^1(e^{\kappa \la v \ra^s})} \\
&\le C(1+t)^{-\theta} \left(  \| f(t) \|_{L^1(e^{\kappa \la v \ra^s})} + \| \mu \|_{L^1(e^{\kappa \la v \ra^s})} \right) \\
&\le C(1+t)^{-\theta} \left( (1+t) + C_\mu \right)
\le C (1+t)^{-\theta+1}.
\eal
$$

\medskip
\noindent
\textit{Step 2.}
Since $f_0 \in L^1_\ell \cap L \log L$ for any $\ell \ge 0$, Proposition~\ref{prop:momentuniform} implies
$$
f \in L^\infty([0,\infty) ; L^1_\ell) \quad \forall\, \ell \ge 0.
$$
As a consequence, Proposition~\ref{prop:H12n}-(3) gives that
$$
\forall t_0 >0, \, \forall\, n, \ell \ge 0, \quad
f \in L^\infty([t_0, \infty); H^n_{\ell} ).
$$

\medskip
\noindent
\textit{Step 3.}
Writing \eqref{eq:duhamel} starting from some time $t_* >0$ to be chosen later and using Theorem~\ref{thm:trou} (since $\Pi_0 h(t) = \Pi_0 Q(h(t),h(t)) = 0$ for any $t\ge 0$) it follows, for any $t \ge t_*$, that
$$
\bal
\| h(t) \|_{L^1(m)} 
&\le \| S_{\LL}(t - t_*)  h({t_*}) \|_{L^1(m)} + \int_{t_*}^t  \| S_{\LL}(t-\tau) Q(h(\tau), h(\tau))\|_{L^1(m)} \, d\tau \\
&\le C e^{-\lambda_0 t} \| h({t_*}) \|_{L^1(m)} + C\int_{t_*}^t e^{-\lambda_0 (t - \tau)} \|Q(h(\tau), h(\tau))\|_{L^1(m)} \, d\tau.
\eal
$$
From Lemma~\ref{lem:Q-Lpm} we have
$$
\| Q(h,h) \|_{L^1(m)} \lesssim 
\| h \|_{L^1_{\gamma+2}} \, \| \nabla^2 h \|_{L^1(\la v \ra^{\gamma+2} m)} 
+  \| h \|_{L^1} \, \| h \|_{L^1(m)} 
+ \| h \|_{L^{2}} \, \| h \|_{L^1(m)}.
$$
Moreover, we have the following interpolation inequality from \cite[Lemma B.1]{MiMo-cmp} 
$$
\| u \|_{H^n(m^{3/2})} \lesssim \| u \|_{L^1(m^3)}^{1/2} \, \| u \|_{H^{2n+1+3/2}}^{1/2}.
$$
Gathering the above bounds we get
$$
\bal
\| \nabla^2 h \|_{L^1(\la v \ra^{\gamma+2} m)} 
&\lesssim \| h \|_{H^2(\la v \ra^{\gamma+4} m)} \lesssim \| h \|_{H^2(m^{3/2})} \\
&\lesssim \| h \|_{L^1(m^3)}^{1/2} \, \| h \|_{H^{4 + 1 + 3/2}}^{1/2} \\
&\lesssim \| h \|_{L^1(m)}^{1/4} \, \| h \|_{L^1(m^5)}^{1/4} \, \| h \|_{H^{13/2}}^{1/2}, 
\eal
$$
where we have used H\"older's inequality in the last line. Moreover, using Nash's inequality we have
$$
\| h \|_{L^2} \lesssim \| h \|_{\dot H^{1}}^{3/5} \, \| h \|_{L^1}^{2/5}
\lesssim \| h \|_{\dot H^{1}}^{3/5} \, \| h \|_{L^1(m)}^{2/5}.
$$
Putting together the previous estimates it yields
$$
\bal
\| h(t) \|_{L^1(m)}
&\le C e^{-\lambda t} \| h(t_*) \|_{L^1(m)}
+ C \int_{t_*}^t e^{-\lambda_0 (t - \tau)}  \| h(\tau) \|_{H^{13/2}}^{1/2} \, \| h(\tau) \|_{L^1(m^5)}^{1/4} \, \| h(\tau) \|_{L^1(m)}^{1+1/4} \, d\tau \\
&\quad
+ C \int_{t_*}^t e^{-\lambda_0 (t - \tau)}   \| h(\tau) \|_{L^1(m)}^{2} \, d\tau \\
&\quad
+ C \int_{t_*}^t e^{-\lambda_0 (t - \tau)}  \| h(\tau) \|_{H^{1}}^{3/5} 
 \, \| h(\tau) \|_{L^1(m)}^{1+2/5} \, d\tau .
\eal
$$
Thanks to step 1, for any $\epsilon>0$ we can choose $t_* = t_*(\epsilon)$ such that
$$
 \sup_{t \ge t_*} \| h(t) \|_{L^1(m)} \le \sup_{t \ge t_*} \| h(t) \|_{L^1(m^5)} \le \epsilon.
$$
Also, from step 2 we get
$$
\sup_{t \ge t_*} \| h(t) \|_{H^{13/2}} \le C_1.
$$
Hence we obtain, for any $t \ge t_*$,
$$
\bal
\| h(t) \|_{L^1(m)}
\le C e^{-\lambda_0 t} \| h(t_*) \|_{L^1(m)}
+ C \left( C_1^{1/2} \epsilon^{1/4} + \epsilon^{3/4} + C_1^{3/5} \epsilon^{9/20}  \right)\int_{t_*}^t e^{-\lambda_0 (t - \tau)}  \| h(\tau) \|_{L^1(m)}^{1+1/4} \, d\tau .
\eal
$$
From this differential inequality, we argue as in \cite[Lemma 4.5]{Mouhot2} and choose $\epsilon >0$ small enough to obtain
$$
\forall\, t\ge t_*, \quad
\| h(t) \|_{L^1(m)} \le C \, e^{-\lambda_0 t} \| h(t_*) \|_{L^1(m)} \le C \, e^{-\lambda_0 t},
$$
from which, together with \eqref{eq:poly-conv} for $t < t_*$, we conclude the proof.
\end{proof}

\bigskip
\noindent{\bf Acknowledgements.}
The author would like to thank L.~Desvillettes, S.~Mischler and C.~Mouhot for fruitful discussions. The author is supported by the Fondation Math\'ematique Jacques Hadamard.


\end{document}